\newcommand{\mc}[1]{\mathcal{#1}}
\newcommand{\wh}[1]{\widehat{#1}}
\theoremstyle{definition}
\newtheorem{definition}{Definition}[section]
\newtheorem{remark}[definition]{Remark}
\newtheorem{example}[definition]{Example}
\numberwithin{equation}{section}
\theoremstyle{plain}
\newtheorem{theorem}[definition]{Theorem}
\newtheorem{lemma}[definition]{Lemma}
\newtheorem{corollary}[definition]{Corollary}
\newtheorem{proposition}[definition]{Proposition}
\newtheorem{claim}[definition]{Claim}
\DeclareMathOperator{\Homeo}{Homeo}
\newcommand{\N}{\mathbb{N}}
\newcommand{\E}{\mathbb{E}}
\def\Diff{{\rm Diff}}
\newcommand{\vol}{\operatorname{vol}}
\def\cH{\mathcal{H}}
\def\bmu{\boldsymbol{\mu}}
\def\cD{\mathcal{D}}
\def\reals{\mathbb{R}}
\def\EXP{{\mathbb{E}}}
\def \qed{\hfill$\square$}
\def \R{\mathbb{R}}
\def\bbD{\mathbb{D}}
\def\Prob{{\mathbb{P}}}
\def\fN{\mathfrak{N}}
\def\SL{\mathop{\rm SL}}
\newcommand{\abs}[1]{\left|#1\right|}
\newcommand{\pez}[1]{\left(#1\right)}
\def\eps{{\varepsilon}}
\def\Var{{\rm Var}}
\def\cN{\mathcal{N}}
\def\tA{{\tilde A}}
\def\cA{\mathcal{A}}
\def\Cor{{\rm Cor}}
\def\gauss{{\mathfrak{f}}}
\def\DS{\displaystyle}
\title[Quenched and annealed statistical properties for random systems]
{On equivalence of quenched and annealed statistical properties for conservative IID random dynamical systems}
\author{Jonathan DeWitt}
\address{Department of Mathematics, The Pennsylvania State University, State College, PA 16802, USA}
\email{dewitt@psu.edu}
\author{Dmitry Dolgopyat}
\address{Department of Mathematics, The University of Maryland, College Park, MD 20742, USA}
\email{dolgop@umd.edu}
\date{\today}
\begin{document}

\begin{abstract}
In this paper, we prove several theorems relating annealed exponential mixing of the two-point motion with quenched properties of the one-point motion for conservative IID random dynamical systems. In particular, we show that annealed exponential mixing of the two-point motion implies quenched exponential mixing of the one-point motion. 
We also show that 
if the two-point motion satisfies annealed exponential mixing and the annealed central limit theorem with polynomial rate of convergence, then the one-point motion satisfies a quenched CLT. These results hold for all H\"older and Sobolev spaces of positive index.
\end{abstract}

\maketitle

\section{Introduction}
\subsection{Quenched and annealed results}

Let $M$ be a closed Riemannian manifold and consider the IID sequence of diffeomorphisms distributed according to a measure $\mu$ on
$\Diff_{\vol}^\infty(M)$ with compact support. 
The random dynamics is then driven by the product measure $\mu^{\mathbb{N}}$ on $(\Diff^{\infty}_{\vol}(M))^{\mathbb{N}}$, and a particular realization of the random dynamics is given by a word $\omega\in (\Diff^{\infty}_{\vol}(M))^{\mathbb{N}}$. In a slight abuse of notation, let $F^N=F^N_\omega=f_{\sigma^{N-1}\omega}\circ \dots \circ f_{\omega}$. 
We also consider the two point motion, which is the induced action $F_{2}^N$ on $M\times M$ given by $F^N_{\omega}\times F^N_{\omega}$, i.e.~$F_{2}^N(x, y)=(F^N(x), F^N(y))$. Below we will often suppress the dependence on $\omega$. 
Denote by $\cH^p_0(M)$  the space of zero mean functions that belong to 
the Sobolev space of index $p$.

For random systems, there are two basic versions of each limit theorem: quenched and annealed. In a quenched limit theorem, one shows that for a.e.~realization $\omega$ of the random system that the limit theorem holds. In an annealed limit theorem, one additionally averages over the entire ensemble of possible realizations $\omega$. Naturally, in the quenched case, one often wants an additional estimate on the set of $\omega$ where the limit converges slowly. 

The goal of this note is to provide sufficient conditions for quenched exponential mixing and the quenched central limit theorem to follow from the annealed versions of these theorems, which are in principal easier to prove. The conditions that appear in this paper are not dynamical, and hence should be widely applicable. For example, none of our results make any particular assumption, on the transfer operator. The results in this paper show that the central limit theorem obtained our previous work \cite{dewitt2025conservative} also holds in the quenched setting.
The main results of this paper are Theorem \ref{ThQMix}, which states that annealed exponential mixing of the two point motion  implies quenched exponential mixing of the one point motion,
and Theorem \ref{ThAnQCLT}, which shows that annealed exponential mixing of the two point motion plus the annealed Central Limit Theorem for the two-point motion entails the quenched CLT. We also show when quenched results provide annealed results, and give some counterexamples showing the sharpness of our statements.

We follow an approach of \cite{DKK04}. However, our results are stronger because \cite{DKK04} considered a fixed
observable while we obtain results which are valid for all sufficiently smooth functions. This extension requires
additional work. The strengthened form of the results of \cite{DKK04} concerning the quenched exponential mixing
was also shown in \cite[Sec.~7]{bedrossian2022almost} in the context of random fluid dynamics on $\mathbb{T}^2$. Although the central limit theorem is not considered in that paper, our Theorem \ref{ThAnQCLT} suggests that the quenched central limit theorem should also hold in that context. 
Like \cite{DKK04}, other papers also consider the quenched central limit theorem but in a weaker sense: they first fix the function and \emph{then} show that for that particular function almost every realization satisfies the central limit theorem. The quenched central limit theorem for products of random toral automorphisms satisfying a cone condition was shown in \cite{ayyer2009quenched}, although in that paper they only consider the case of $C^{\infty}$ observables. In \cite{conze2012central} a quenched central limit theorem is obtained for certain products of toral automorphisms. In \cite[Thm.~7.1]{aimino2015annealed}, the authors say that the argument of \cite{ayyer2009quenched} can be adapted to their setting although they do not appear to give any general statement. For comparison, we remark that our Theorem \ref{ThAnQCLT}, allows any polynomial rate of convergence, rather than $N^{1/2}$. We have taken pains here to not include spectral gap as a hypothesis in our theorems. 

 It is perhaps natural to wonder why one might ask for annealed exponential mixing of the two point motion, rather than just the one point motion when looking for quenched exponential mixing. The reason is that if  $\E[\int A(B\circ F^N)\,dx]$ is small, then one has no control over what a particular realization of $\int A(B\circ F^N)\,dx$ might be: There might just be cancellation between the different terms. Hence to get control, it is natural to look at the variance of $\int A(B\circ F^N)\,dx$ and try to get control of that. But the variance can  be rewritten as
\[
\E\left[\left(\int_M AB\circ F^N_{\omega}\,dx\right)^2\right]=\E\left[\int_{M\times M} A(x)B(F^N_{\omega}(x))A(y)B(F^N_{\omega}(y))\,dx\,dy\right].
\]
Hence we are led directly to consider annealed mixing of the two point motion. Studying the quantity above is enough to show that for fixed functions a.e.~word will have exponential mixing. However, to obtain  that for a.e.~realization \emph{all} functions mix, one must take advantage of properties of the function space, such as separability. This is what we do below.

\subsection{Definitions}
The notions of quenched and annealed make sense outside of the IID setting. Although we focus here on the IID case, we will consider these more general notions in Section \ref{sec:annealed_from_quenched}. These definitions make sense in a wider context of skew products. As before, let $M$ be a closed manifold and let $\sigma$
be an automorphism of a probability space $\Omega$ preserving a probability measure $\bmu$. Consider the map $T\colon \Omega\times M\mapsto\Omega\times M$ given by the formula
\begin{equation}
\label{Skew}
 T(\omega, x)=(\sigma \omega, f_\omega (x))  ,
\end{equation}
where for each $\omega$ the map 
$f_\omega\in \Diff^{1}_{\vol}(M).$ Then the iterates of $T$ satisfy
$T^n(\omega, x)=(\sigma^n \omega, F^N_\omega(x))$. The two main concepts of this paper are the following. 

\begin{definition}\label{defn:annealed_exponential_mixing_skew}
We say that the (skew product) random system $T$ in \eqref{Skew}  enjoys {\em annealed exponential mixing} on $\mc{H}^p_0(M)$ if there exist $\alpha,C>0$ such that
for all $A, B\in \mathcal{H}^p_0(M)$ we have
\[
\left| \mathbb{E}_{\bmu} \left(\int A(x) B(F^N_\omega x) dx\right) \right| \leq C e^{-\alpha N} \|A\|_{\mathcal{H}_0^p} \|B\|_{\mathcal{H}_0^p}.
\]
We similarly define this notion for $C^{\alpha}(M)$. 
\end{definition}

\begin{remark}
The annealed mixing considered in Definition \ref{defn:annealed_exponential_mixing_skew} is sometimes
called {\em relative} exponential mixing for the skew product, since we only consider observables that 
do not depend on $\omega$. However, one can show, see e.g. \cite[Corollary 1.2]{dewitt2024surfaces}, that if $\Omega$ is a smooth manifold,
and the base map $\sigma$ is smooth and enjoys exponential mixing then the whole skew product is exponentially mixing
in the sense that for $A, B\in \cH_0^s(\Omega\times M)$
$$ \left|\iint A (B\circ T^N) d\bmu (\omega) d\vol(x) \right|\leq \|A\|_{s} \|B\|_s. $$
\end{remark}

\begin{definition}\label{defn:quenched_exp_mixing_skew}
We say that the random system $T$ in \eqref{Skew} enjoys {\em quenched exponential mixing} on $\mc{H}^p_0(M)$ if there exists $\alpha>0$
and an almost surely finite random variable $C(\omega)$
such that 
for all $A, B\in \mathcal{H}^p_0(M)$
we have
\begin{equation}
\label{EqQEMix}
 \left| \int A(x) B(F^N_\omega x) dx \right| \leq C(\omega) e^{-\alpha N} \|A\|_{\mathcal{H}_0^p} \|B\|_{\mathcal{H}_0^p}.
 \end{equation}   
\end{definition}

Given a function $A$ on $M$ denote 
\begin{equation}\label{defn:S_N}
\DS S_N A(x, \omega)=\sum_{n=0}^{N-1} A(F^n_\omega x).
\end{equation}

\begin{definition}\label{defn:annealed_CLT}
    We say that the random system \eqref{Skew} enjoys the {\em annealed Central Limit Theorem} if there exists $p\geq 0$ and a map $\cD\colon \mathcal{H}^p_0(M)\to \reals$, which is not identically equal to $0$,
     such that for each $A\in \mathcal{H}_0^p$, 
    if $x$ is uniformly distributed on $M$ with respect to volume and $\omega$ is distributed according to $\bmu$ then
    $S_N A(x,\omega)/\sqrt{N}$ converges as $N\to\infty$ to the normal distribution with zero mean and variance $\cD(A).$
    \end{definition}

\begin{definition}\label{defn:quenched_CLT}
   We say that the random system \eqref{Skew} enjoys the {\em quenched Central Limit Theorem} on $\mc{H}^p_0(M)$ if there is a map $\mathsf{D}\colon \mathcal{H}^p_0(M)\to\reals$, which is not identically equal to $0$,
     such that for each $A\in \mathcal{H}_0^p(M)$, there are random variables $a_N(\omega)$ called the quenched drift, and $q_N(\omega)$, called the quenched variance, such that for $\bmu$-almost every 
    $\omega$
    if $x$ is uniformly distributed on $M$ then
     \[
     \frac{S_N A(x, \omega)-a_N(\omega)}{q_N(\omega)}
     \]
     converges as $N\to\infty$ to the normal distribution with zero mean and variance $\mathsf{D}(A).$  
\end{definition}    

\subsection{Statements of main results}
Our main results allow to derive the conclusions of quenched limit theorems from more easily accessible annealed results.
We work in the setting of IID random systems. The following are the main results of this paper.

\begin{theorem}
\label{ThQMix}
Suppose that $\mu$ is a measure supported on $\Diff^r_{\vol}(M)$, $r\ge 1$ and that the associated two-point motion $F_{2}^N$ enjoys annealed exponential mixing on $\mc{H}^p_0(M\times M)$, i.e.~there exists $C\ge 0$ such that for $A,B\in \cH^p_0(M\times M)$,
\begin{equation}
\label{EqAn2Mix}
\left| \EXP\left(\iint A(x, y) B(F_{2}^N (x, y)) dx dy\right)\right| 
\leq C \|A\|_{\cH^p_0} \|B\|_{\cH^p_0} e^{-\alpha N} . 
\end{equation}
Then $F$ satisfies quenched exponential mixing, that is, for all $s>0$
there exists $\beta>0$ such that for almost every $\omega$ there exists $ C=C(\omega)$ such that 
 for
all $A, B\in \cH^s_0$
\begin{equation}
\label{EqQEM}
 \left| \int A(F^{N}_{\omega} x) B(F^{N+k}_{\omega} x) dx\right| \leq C N \|A\|_{\cH^s_0} \|B\|_{\cH^s_0} e^{-\beta k} . 
 \end{equation}
Moreover, there is a polynomial tail on $C(\omega)$: There exists $C'$ such that that $\mathbb{P}(C(\omega)>D)\le C'D^{-2}$.
\end{theorem}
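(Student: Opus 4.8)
The plan is to deduce the quenched estimate \eqref{EqQEM} from the annealed two-point estimate \eqref{EqAn2Mix} by a second-moment (variance) argument combined with a Borel--Cantelli argument, followed by a density/separability argument to upgrade "for each fixed pair $A,B$, a.e.\ $\omega$" to "a.e.\ $\omega$, for all $A,B$." First I would observe that the quantity we want to bound, $I_N^{A,B}(\omega):=\int A(F^N_\omega x)\,B(F^{N+k}_\omega x)\,dx$, can be handled after noting that $A\circ F^N$ has the same integral statistics against $B\circ F^{N+k}$ as $\tilde A$ against $\tilde B\circ F^k$ for suitable push-forwards; more straightforwardly, fix the ``base point'' at time $N$ and reduce to controlling $J_k(\omega'):=\int A(x)B(F^k_{\omega'}x)\,dx$ where $\omega'=\sigma^N\omega$, but keeping track that we need the bound simultaneously for all $N$. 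So the core estimate is: for fixed $A,B\in\cH^p_0(M)$,
\[
\EXP\!\left[\left(\int A(x)\,B(F^k_\omega x)\,dx\right)^2\right]
=\EXP\!\left[\iint (A\otimes A)(x,y)\,(B\otimes B)(F_2^k(x,y))\,dx\,dy\right]\le C\|A\|^2_{\cH^p_0}\|B\|^2_{\cH^p_0}\,e^{-\alpha k},
\]
using that $A\otimes A-\big(\int A\big)^2=A\otimes A$ lies in $\cH^p_0(M\times M)$ with norm controlled by $\|A\|^2_{\cH^p_0}$ (and similarly for $B$), and applying \eqref{EqAn2Mix}. This is exactly the computation flagged in the introduction.

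Next I would run a Chebyshev plus Borel--Cantelli step. From the second-moment bound, $\Prob\big(|J_k(\omega)|>e^{-\alpha k/4}\big)\le C\,e^{-\alpha k/2}$, which is summable, so for a.e.\ $\omega$ one has $|J_k(\omega)|\le e^{-\alpha k/4}$ for all large $k$; absorbing the finitely many small $k$ into a random constant $C(\omega)$ gives quenched exponential mixing with rate $\beta=\alpha/4$ for the fixed pair $(A,B)$. The factor $N$ and the shift by $N$ in \eqref{EqQEM} come from re-inserting the time shift: applying the above to $\omega'=\sigma^N\omega$ gives a bound with constant $C(\sigma^N\omega)$, and to get a single constant valid for all $N$ simultaneously one sums the tail probabilities over $N$ with a polynomially growing threshold, i.e.\ choosing $C(\omega)=\sup_N C(\sigma^N\omega)/N$-type object; the $N^{-2}$-summability of $\Prob(C(\sigma^N\omega)>DN)\le C' (DN)^{-2}$ (here using the second moment once more, now with a polynomial rather than exponential threshold) yields both the factor $N$ and the claimed polynomial tail $\Prob(C(\omega)>D)\le C'D^{-2}$.

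The remaining, and I expect hardest, step is the uniformity over \emph{all} $A,B\in\cH^s_0$ for a single full-measure set of $\omega$. Since $\cH^s_0(M)$ is separable, fix a countable dense set $\{A_i\}$; the above gives a full-measure $\Omega_0$ on which \eqref{EqQEM}-type bounds hold for all pairs $(A_i,A_j)$ simultaneously. To pass to arbitrary $A,B$ one needs an \emph{a priori} (deterministic, $\omega$-dependent but finite) bound $\big|\int A(F^N_\omega x)B(F^{N+k}_\omega x)dx\big|\le K(\omega)\|A\|_{\cH^s}\|B\|_{\cH^s}$ with no decay, so that approximating $A,B$ by $A_i,A_j$ in $\cH^s_0$-norm and using bilinearity controls the error. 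Such a crude bound should follow from the boundedness of the composition/transfer operators $B\mapsto B\circ F^N_\omega$ on $\cH^s$, with operator norm growing at most exponentially in $N$ (controlled by the Lyapunov-type growth of $\|DF^N_\omega\|$ since $\mu$ has compact support in $\Diff^r_{\vol}$, $r\ge 1$, hence $r\ge s$ after possibly shrinking $s$ or interpolating), and one must be slightly careful that $s$ may exceed $r$ — this is presumably where the ``additional work'' mentioned in the introduction enters, handled via interpolation between $\cH^0=L^2$ and $\cH^r$. The trick is that the non-decaying a priori bound only needs to beat the approximation error, which we can make as small as we like by choosing $A_i,A_j$ close enough depending on $N,k$ — and since for each $(N,k)$ only a fixed finite precision is needed, a diagonal argument over the countable dense set closes the gap on the full-measure set $\Omega_0$.
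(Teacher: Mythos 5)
Your first two steps (bounding the second moment of $\int A(x)B(F^k_\omega x)\,dx$ by applying the annealed two-point estimate \eqref{EqAn2Mix} to $A\otimes A$ and $B\otimes B$, then Chebyshev plus Borel--Cantelli, with a linearly growing threshold in the time shift $N$ to get the factor $N$ and the tail) are exactly the paper's strategy. The gap is in the last step, the uniformity over all $A,B\in\cH^s_0$. Your Borel--Cantelli argument produces, for each pair $(A_i,A_j)$ in the countable dense set, an almost surely finite constant $C_{i,j}(\omega)$, and intersecting the full-measure sets is fine; but nothing controls $\sup_{i,j}C_{i,j}(\omega)$, which may be infinite. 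Your approximation scheme then picks approximants $A_i,A_j$ whose required precision depends on $(N,k)$, so the main term is bounded by $C_{i(N,k),j(N,k)}(\omega)\,N e^{-\beta k}$ with a prefactor that can blow up as $(N,k)\to\infty$; the ``diagonal argument'' does not close, and it certainly does not yield the single constant $C(\omega)$ (with the stated $D^{-2}$ tail) required by \eqref{EqQEM}, nor even clean exponential decay for a fixed pair outside the dense set.

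The paper resolves precisely this point by replacing the abstract dense set with the Laplace eigenbasis $\{\varphi_i\}$ and running Chebyshev with frequency-weighted thresholds: one demands $\abs{\rho_{i,j,n,k}}\le D\,n\,\lambda_i^{p+t}\lambda_j^{p+t}e^{-\beta k}$, so that the exceptional probabilities carry factors $\lambda_i^{d-2t}\lambda_j^{d-2t}$ which are summable over $i,j$ by the Weyl law (Lemma \ref{lem:weyl_law_sum}) when $t>d$; this gives one event of probability $\le C'D^{-2}$ covering \emph{all} basis pairs and all $n,k$ simultaneously, which is where the polynomial tail comes from. General $A,B$ are then handled not by approximation but by expanding $A=\sum_i a_i\varphi_i$, $B=\sum_j b_j\varphi_j$ and summing the per-mode bounds, the weights $\lambda_i^{p+t}$ being absorbed into $\|A\|_{\cH^s_0}$ by Cauchy--Schwarz provided $s>p+t+d/2$ (small $s$ is recovered afterwards by the interpolation Lemma \ref{lem:interpolation_of_mixing}). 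As a side remark, your appeal to exponential bounds on the composition operator norm on $\cH^s$ is both delicate (the maps are only $C^r$) and unnecessary: volume preservation and Cauchy--Schwarz give an a priori bound through $L^2$ alone; but the real missing ingredient is the quantitative, norm-weighted uniformity of the Borel--Cantelli constants over the approximating family.
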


We note that the annealed exponential mixing assumption on $\mc{H}^p_0$ implies that \eqref{EqQEM} holds for $C^{\alpha}$ as well. Further, as annealed exponential mixing on $\mc{H}^p_0(M\times M)$ and $C^{\alpha}(M\times M)$ are equivalent by Remark~\ref{rem:holder_sobolev_equivalence}, \eqref{EqQEM} holds under the assumption of exponential mixing on $C^{\alpha}(M\times M)$, $\alpha>0$.

Next we discuss the quenched CLT. 
For this we will need to describe the variance of the resulting distribution. 
For $A\colon M\to \R$ let
\[
\mc{D}(A)=\EXP\left(\int A^2 dx dy\right)
+2\sum_{k=1}^\infty \EXP\left(\int A(x) A(F^k x) dx dy\right) 
\]
Given a function $B$ on $M\times M$ let 
\begin{equation}
S_B^N(x,y)=\sum_{n=0}^{N-1} B(F^n x, F^ny),
\end{equation}
and let  
\begin{equation}
\label{2DVar}
\bbD(B)=\EXP\left(\int B^2 dx dy\right)
+2\sum_{k=1}^\infty \EXP\left(\int B(x,y) B(F^k x, F^k y) dx dy\right). 
\end{equation}

In the proof of Theorem \ref{ThAnQCLT} below, we will apply this formula where $B$ has either the special form $B(x,y)=A(x)-A(y)$ or $B(x,y)=A(x)$. In these cases a simple calculation using that $\iint A(x)A(F^k(y))\,dx\,dy=0$ yields that
\begin{equation}\label{eqn:Ax-Ay}
\bbD(A(x)-A(y))=2\mathcal{D}(A)\text{ and }\bbD(A(x))=\mathcal{D}(A).
\end{equation}

\begin{theorem}
\label{ThAnQCLT}
Suppose that $\mu$ is a probability measure supported on $\Diff^1_{\vol}(M)$ and that for the associated two-point motion we have annealed exponential mixing on $\mc{H}^p(M\times M)$ for some $p\ge 0$ or on $C^{\alpha}(M\times M)$ for some $\alpha>0$, i.e.\ \eqref{EqAn2Mix} holds.  
Moreover, 
suppose that
for each $C^{\infty}$ function $B(x,y)\in C^{\infty}(M\times M)$, 
we have that
$S^N_B$ satisfies the Central Limit Theorem with polynomial convergence 
of characteristic functions, that is: there exists $\eta>0$ such that for each $\xi\in \R$:
\begin{equation}
\label{Eq2DPolyChar}
 \EXP\left(\iint e^{iS^N_B(x,y) \xi/\sqrt{N}} dx dy\right)=e^{-\bbD(B) \xi^2/2} 
  +O_{B}\left(N^{-\eta}\right) .
\end{equation}
Then for each $s>0$, with probability $1$ for each $A\in \mathcal{H}^s_0(M)$ it holds that 
as $N\to \infty$ that $ N^{-1/2} S_N(A)$ converges to a normal random variable with zero mean and variance $\cD(A)$. 

The quenched CLT also holds with $\mathcal{H}^s_0(M)$ replaced by $C^s_0(M)$---\discretionary{}{}{}the space of zero mean H\"older functions.
\end{theorem}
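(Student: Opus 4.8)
The plan is to follow the same two-stage strategy that underlies Theorem \ref{ThQMix}: first establish the quenched CLT for a single fixed observable, with a quantitative tail bound on the rate of convergence, and then upgrade to all observables simultaneously using separability of the function space. For the first stage, fix $A\in C^\infty_0(M)$ and apply the martingale-type second moment method of \cite{DKK04}: the variance of $N^{-1/2}S_NA(x,\omega)$ over $x$, for fixed $\omega$, is an integral over $M\times M$ of products $A(F^n_\omega x)A(F^m_\omega y)$, and by grouping blocks of length $\sqrt{N}$ (or some $N^\theta$) and using annealed exponential mixing of the two-point motion \eqref{EqAn2Mix} one controls how far the quenched characteristic function $\int e^{iS_NA(x,\omega)\xi/\sqrt N}\,dx$ deviates from its annealed average. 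Concretely, I would estimate $\EXP\big[\,\big|\int e^{iS_NA(x,\omega)\xi/\sqrt N}\,dx-\EXP\int e^{iS_NA(x,\omega)\xi/\sqrt N}\,dx\big|^2\big]$ by expanding the square into an integral over $M\times M$ of $e^{i(S_NA(x)-S_NA(y))\xi/\sqrt N}$, recognizing the exponent as $S^N_B$ with $B(x,y)=A(x)-A(y)$, and applying the annealed CLT hypothesis \eqref{Eq2DPolyChar} together with the identity $\bbD(A(x)-A(y))=2\mathcal D(A)$ from \eqref{eqn:Ax-Ay}, and $\bbD(A(x))=\mathcal D(A)$ to handle the cross term. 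The upshot is that this variance is $O_A(N^{-\eta})$, so by Chebyshev and Borel--Cantelli along a polynomially sparse subsequence $N_j=\lfloor j^{2/\eta+1}\rfloor$ (say) the quenched characteristic function converges a.s.\ to $e^{-\mathcal D(A)\xi^2/2}$ along $N_j$; interpolation between consecutive $N_j$ is handled by the crude bound $|S_{N}A-S_{N_j}A|\le \|A\|_\infty (N_{j+1}-N_j)$, which after dividing by $\sqrt{N_j}$ is $o(\sqrt{N_j})$ only if the gaps are $o(\sqrt N)$ — so I would instead choose $N_j$ growing like $j^{1/(1-\varepsilon)}$ for small $\varepsilon$, making the Borel--Cantelli sum converge while keeping $N_{j+1}-N_j=o(\sqrt{N_j})$, which is possible precisely because $\eta>0$ gives us room. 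This yields the quenched CLT with zero drift ($a_N=0$) and deterministic variance ($q_N=\sqrt N$, $\mathsf D(A)=\mathcal D(A)$) for the fixed smooth $A$.

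For the second stage I would pass to all $A\in\mathcal H^s_0(M)$ (resp.\ $C^s_0(M)$) at once. Fix a countable set $\{A_k\}$ that is dense in $\mathcal H^s_0(M)$ and consists of $C^\infty$ functions; by the first stage and a countable intersection of full-measure sets, there is a single full-measure set of $\omega$ on which every $A_k$ satisfies the quenched CLT. Given an arbitrary $A\in\mathcal H^s_0$, approximate $A$ by $A_k$ in $\mathcal H^s_0$; I need (i) that $\mathcal D$ is continuous on $\mathcal H^s_0$, which follows from the annealed exponential mixing bound \eqref{EqAn2Mix} applied to $B(x,y)=A(x)$ (the series defining $\mathcal D(A)$ converges geometrically with $\mathcal H^s_0$-continuous terms), and (ii) a quenched bound controlling $\int |S_N(A-A_k)|^2\,dx$ uniformly in $N$, i.e.\ $N^{-1}\int (S_N(A-A_k))^2\,dx\le C(\omega)\|A-A_k\|^2_{\mathcal H^s_0}$ with an a.s.-finite $C(\omega)$ having a polynomial tail — exactly the kind of estimate produced by the quenched exponential mixing of Theorem \ref{ThQMix} (whose hypotheses are implied here), summing the geometric series $\sum_{n,m}|\int (A-A_k)(F^n x)(A-A_k)(F^m x)dx|$. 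With (i) and (ii) in hand, a standard $3\varepsilon$/approximation argument for convergence in distribution (approximate $A$ by $A_k$, use the CLT for $A_k$, send $k\to\infty$) shows the quenched CLT holds for $A$ as well, with $a_N(\omega)=0$, $q_N(\omega)=\sqrt N$, $\mathsf D(A)=\mathcal D(A)$.

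The main obstacle I anticipate is the interplay between the polynomial (rather than exponential) rate in \eqref{Eq2DPolyChar} and the need for almost-sure convergence along the \emph{full} sequence $N\to\infty$ rather than a subsequence. One only gets summability of the Borel--Cantelli probabilities along a sparse subsequence $N_j$, and then must interpolate; this forces a careful choice of $N_j$ so that simultaneously $\sum_j N_j^{-\eta}<\infty$ (or rather $\sum_j (\text{variance at }N_j)<\infty$) and $N_{j+1}-N_j=o(\sqrt{N_j})$, and one must verify that the crude deterministic control $\|S_{N}A-S_{N_j}A\|_\infty\lesssim \|A\|_\infty|N-N_j|$ is enough to transfer the limit — this is where the hypothesis $\eta>0$ (any polynomial rate) is genuinely used, and getting the bookkeeping right is the delicate point. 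A secondary technical point is making sure all of this is uniform enough in the approximation step (ii) to exchange the limits $k\to\infty$ and $N\to\infty$; this is handled by the polynomial tail on $C(\omega)$ from Theorem \ref{ThQMix}, which lets one choose, on a full-measure set, a \emph{deterministic} bound valid for all large $N$ after discarding an event of small probability — but since the final statement is only an a.s.\ one, a direct Fatou/diagonal argument on the fixed full-measure set suffices. I would also remark that the $C^s_0(M)$ version follows verbatim once one notes, via Remark \ref{rem:holder_sobolev_equivalence}, that the annealed hypotheses on $C^\alpha(M\times M)$ and $\mathcal H^p(M\times M)$ are interchangeable, and that $C^s_0(M)$ is separable with a dense set of smooth functions.
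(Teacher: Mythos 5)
Your first-stage core computation is the same as the paper's: apply \eqref{Eq2DPolyChar} to $B(x,y)=A(x)-A(y)$ and $B(x,y)=A(x)$, use \eqref{eqn:Ax-Ay} to see that the second moment of $\int e^{i\xi S_N(A)/\sqrt N}dx-e^{-\xi^2\cD(A)/2}$ is $O(N^{-\eta})$, and run Chebyshev/Borel--Cantelli along a polynomially sparse subsequence. However, your interpolation between subsequence points has a genuine gap. With the crude bound $\|S_NA-S_{N_j}A\|_\infty\le\|A\|_{C^0}(N_{j+1}-N_j)$ you need $N_{j+1}-N_j=o(\sqrt{N_j})$, which forces $N_j=O(j^2)$, while Borel--Cantelli needs $\sum_j N_j^{-\eta}<\infty$, i.e.\ $N_j\gtrsim j^{a}$ with $a>1/\eta$; these are incompatible whenever $\eta\le 1/2$, and your proposed choice $N_j\sim j^{1/(1-\varepsilon)}$ makes the Borel--Cantelli sum diverge unless $\eta>1-\varepsilon$. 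So, as written, your argument only covers large $\eta$, whereas the theorem allows any $\eta>0$. The paper avoids this by not using a sup-norm bound at all: since the hypotheses imply Theorem \ref{ThQMix}, one has the quenched bound \eqref{eqn:upperbound_mixing_slow}, which gives $\int(S_N-S_{N_m})^2dx\le C(\omega)(N-N_m)\ln N$; dividing by $N$ and using Chebyshev in $x$ plus Slutsky, gaps of size $o(N_m/\ln N_m)$ are tolerable, so one may take $N_m=m^a$ with $a>1/\eta$ arbitrary.

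The second gap is in your approximation step (ii). The estimate $N^{-1}\int(S_N(A-A_k))^2dx\le C(\omega)\|A-A_k\|^2$ uniformly in $N$ does \emph{not} follow by ``summing the geometric series'' from \eqref{EqQEM}: the quenched correlation bound there carries the prefactor $n$ (the earlier time), so the double sum yields $C(\omega)N\ln N\|A-A_k\|^2$, and the extra $\ln N$ destroys the fixed-approximant argument (for fixed $k$ and $\varepsilon$ the error term diverges as $N\to\infty$, and letting $k=k(N)\to\infty$ would require uniformity in $k$ of the CLT that you do not have). This is precisely why the paper proves Proposition \ref{prop:quenched_variance} separately, via Azuma's inequality and the $\epsilon$-net/controlled-$L^2$-complexity machinery of Section \ref{sec:var}, obtaining $V_N(B)=N\cD(B)+o(N)\|B\|^2_{\mc B}$ for a.e.\ $\omega$ simultaneously for all $B$; that proposition is what powers both the density step (the analogue of \eqref{eqn:second_term_est2}) and the equicontinuity/tightness of the characteristic functions $\Phi_{A,N}$ (the paper's Step 4), a step you also skip but which is needed to pass from a.s.\ convergence at countably many $\xi$ to convergence in distribution. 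In short: the skeleton matches the paper, but the two places you flag as ``bookkeeping'' are exactly where a different tool is required (quenched mixing for the subsequence gaps; the Section \ref{sec:var} variance analysis for uniformity over the function space), and with your stated choices the argument does not close for small $\eta$.
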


 We note that the hypotheses of Theorem \ref{ThAnQCLT} is satisfied  for systems 
with spectral gap \cite[Thm.~3.7]{Gouezel}, 
see the discussion in the proof of \cite[Thm.~7.13]{dewitt2025conservative} for  details. 
Hence we obtain: 
\begin{corollary}
    If the generator of the two-point motion of $\mu$ has a spectral gap on $\cH^s_0$ for some $s\in \R$ 
    then quenched exponential mixing holds on $\mathcal{H}^p_0$ with $p>0$, and the quenched 
    Central Limit Theorem holds for $C^r$ functions with $r>0$.
\end{corollary}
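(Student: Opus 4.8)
The corollary is simply a matter of verifying that the spectral-gap hypothesis implies the hypotheses of Theorems \ref{ThQMix} and \ref{ThAnQCLT}, both of which concern the two-point motion. So the plan is in two parts: first deduce annealed exponential mixing of the two-point motion on some H\"older or Sobolev space from the spectral gap of its generator, and second deduce the polynomial-rate annealed CLT \eqref{Eq2DPolyChar} for the two-point motion, again from the spectral gap. Once these two facts are in hand, Theorem \ref{ThQMix} yields quenched exponential mixing of the one-point motion on $\mathcal{H}^p_0$ for all $p>0$, and Theorem \ref{ThAnQCLT} yields the quenched CLT for $C^r$ functions, $r>0$ — exactly the two conclusions asserted.

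\textbf{Step 1: annealed exponential mixing of the two-point motion.} If the generator (annealed transfer operator) of the two-point motion has a spectral gap on $\mathcal{H}^s_0(M\times M)$, then by definition its $N$-th iterate has operator norm bounded by $Ce^{-\alpha N}$ on that space. Pairing against a test function and using that the annealed correlation $\EXP\left(\iint A(x,y)B(F_2^N(x,y))\,dx\,dy\right)$ is exactly $\langle A, \mathcal{L}^N B\rangle$ (or the transpose version, depending on convention) immediately gives the bound \eqref{EqAn2Mix} on $\mathcal{H}^s_0(M\times M)$. By Remark~\ref{rem:holder_sobolev_equivalence}, annealed exponential mixing on a Sobolev space of positive index is equivalent to annealed exponential mixing on $C^\alpha(M\times M)$ for some $\alpha>0$, and then — by the same mechanism used in the statements of Theorems \ref{ThQMix} and \ref{ThAnQCLT} — the conclusion \eqref{EqQEM} holds on $\mathcal{H}^p_0$ for every $p>0$. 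This step is essentially a restatement of what "spectral gap" means, so it is routine.

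\textbf{Step 2: polynomial-rate annealed CLT for the two-point motion.} For a fixed $C^\infty$ function $B(x,y)$ on $M\times M$, I want \eqref{Eq2DPolyChar}: the annealed characteristic function of $S^N_B/\sqrt{N}$ equals $e^{-\bbD(B)\xi^2/2}$ up to $O_B(N^{-\eta})$. This is precisely the kind of quantitative CLT one gets by a perturbation (Nagaev–Guivarc'h) argument from a spectral gap: one considers the twisted transfer operator $\mathcal{L}_{t}$ obtained by multiplying by $e^{itB}$, shows it depends analytically on $t$, notes that at $t=0$ it has a simple leading eigenvalue $1$ isolated from the rest of the spectrum by the gap, and perturbs the leading eigenvalue $\lambda(t)=1-\tfrac12\bbD(B)t^2+O(t^3)$. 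Feeding $t=\xi/\sqrt{N}$ and iterating $N$ times produces $\lambda(\xi/\sqrt N)^N=e^{-\bbD(B)\xi^2/2}+O(N^{-1/2})$, with the remainder coming from the sub-leading spectrum decaying like $e^{-\alpha N}$. This is exactly the content of \cite[Thm.~3.7]{Gouezel}, and the excerpt itself points to the discussion in the proof of \cite[Thm.~7.13]{dewitt2025conservative}; so I would invoke those references rather than reproduce the perturbation theory, checking only that (a) the variance appearing there is the one denoted $\bbD(B)$ in \eqref{2DVar}, which follows from the standard Green–Kubo computation $\lambda''(0)=-\bbD(B)$, and (b) the error is genuinely a negative power of $N$, which it is (in fact $O(N^{-1/2})$, so any $\eta<1/2$ works).

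\textbf{Main obstacle.} There is no deep obstacle — the corollary is an application, not a new theorem. The one point that requires a little care is a matching of function spaces and conventions: the spectral gap is hypothesized on $\mathcal{H}^s_0$ for some $s\in\R$ (possibly with $s$ not in the positive-index range that Theorems \ref{ThQMix} and \ref{ThAnQCLT} want), so one must use Remark~\ref{rem:holder_sobolev_equivalence} (equivalence of annealed exponential mixing across H\"older and Sobolev spaces of positive index) to move the mixing statement into the spaces where the two theorems apply, and one must confirm that the polynomial-rate CLT from \cite{Gouezel} is stated (or trivially upgradable) in the same $C^\infty$-test-function form required by \eqref{Eq2DPolyChar}. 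Both are bookkeeping, already done in \cite{dewitt2025conservative}, so the proof is short: cite Step 1, cite Step 2, apply the two main theorems.
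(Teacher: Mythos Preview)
Your proposal is correct and matches the paper's own argument: the paper simply remarks (just before stating the corollary) that spectral gap of the two-point generator yields the hypotheses of Theorem~\ref{ThAnQCLT} via \cite[Thm.~3.7]{Gouezel} (with details in \cite[Thm.~7.13]{dewitt2025conservative}), and then applies Theorems~\ref{ThQMix} and~\ref{ThAnQCLT}. Your Steps~1 and~2 are exactly this, and your remark about moving between Sobolev indices via Remark~\ref{rem:holder_sobolev_equivalence} (or, when $s\le 0$, by the trivial inclusion $\mathcal H^p_0\hookrightarrow\mathcal H^s_0$) correctly handles the only bookkeeping wrinkle.
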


\subsection{Structure of the paper} The structure of the paper is the following. In Section \ref{sec:counterexamples}, we give some counterexamples to motivate our main theorems. 
Technical preliminaries are collected in 
Section~\ref{sec:prelims}. Then in Section \ref{sec:annealed_from_quenched} we study when quenched results imply annealed results. In Section \ref{sec:exponential_mixing}, we show that annealed exponential mixing of the two point motion implies quenched exponential mixing of the one point motion.
In Section \ref{sec:var}, we prove some preliminary estimates controlling the convergence of the quenched variance. 
Then in Section \ref{sec:central_limit_theorem}, we show how to deduce the quenched central limit theorem from the annealed one. 
\\

\noindent\textbf{Acknowledgments.} The first author was supported by the National Science Foundation under Award No.\ DMS-2202967. The second author was supported by the National Science Foundation under award No.\ DMS-2246983. 

\section{Counterexamples}\label{sec:counterexamples}
In general, quenched and annealed results are inequivalent even for IID random maps. Our first example shows that even if the annealed dynamics averages perfectly after a single iterate that there might still be no quenched result. In particular, these examples show that the hypotheses on the two-point motion that we consider are quite natural.

\begin{example} (Uniformly Random Translations on $\mathbb{T}^d$) 
Let $\omega_n\in (\mathbb{T}^d)^\mathbb{N}$ be uniformly distributed on $\mathbb{T}^d$ and let $f_\omega(x)=x+\omega_1$.
Then $x_N=F_\omega^N x$ are IID uniformly distributed on $\mathbb{T}^d$ so the system enjoys annealed exponential 
mixing. However, in this case $x_N=x_0+W_N$ where 
\begin{equation}
    \label{TTInvSum}
W_N=\sum_{n=0}^{N-1} \omega_n,
\end{equation}
so letting $A\!=\!e^{i\langle k, x\rangle}$, $B\!=\!e^{-i\langle k, x\rangle}$  for some $k\!\neq\! 0$ 
we see that
$|\int A(x) B(F^N_\omega(x))dx|\!=\! 1$ for all $N.$
Thus the system does not have quenched mixing. 

Next we consider the central limit theorem for this system. Certainly the system satisfies the annealed central limit theorem for the one-point motion. For any $A\colon \mathbb{T}^d\to \R$, the distribution of $S_NA(x,\omega)$, where $x$ is distributed according to $\vol$ and $\omega$ is distributed as above, is the same as the distribution of $\sum_{i=1}^n Y_i$ where the random variables $Y_i$ are IID with the same law as $A_*(\vol)$. Hence the annealed central limit in this case reduces to the classical sum of IID random variables. However, the quenched theorem does not hold. Consider the case where $A(x)=e^{i\langle k,x\rangle}$. Then
\[
S_NA(x,\omega)=\sum_{i=1}^n e^{i\langle k,x+\sum_{j=1}^i \omega_j\rangle}=e^{i\langle k,x\rangle}\sum_{i=1}^n e^{i\langle k,\sum_{j=1}^i\omega_j \rangle}.
\]
But such a sequence of random variables cannot satisfy the central limit theorem because their distribution is just a rescaled version of $e^{i\langle k,x\rangle}$; this fails to ever approach a non-trivial normal distribution.
\end{example}

Next we give two examples where quenched exponential mixing and the quenched central limit theorem hold, but the annealed result fails. 

\begin{example}
\label{ExT-TInv}
 (a)   Let $g$ be a linear Anosov diffeomorphism of $\mathbb{T}^d$. Let $\omega_n$ be IID integer valued random variables 
   where $\mathbb{P}(\omega_n=-k)=\frac{0.001}{k^3}$ for $k<0$ and 
   $ \mathbb{P}(\omega_n=1)=\mathbb{P}(\omega_n=2)=(1-0.001\zeta(3))/2$. 
Let $f_\omega=g^{\omega_0}$. Then $F^N_\omega=g^{W_N}$ where $W_N$ is given by \eqref{TTInvSum}.
 Using the $.001$ factor, it is easy to see that  $\mathbb{E}[\omega_n]\in [1, 2]$, so by the Strong Law of Large Numbers for almost every
$\omega$ we have that
$W_N>N$ for large $N$. Then
$\int A(x) B(g^{W_N} x) dx$ decays exponentially due to the exponential mixing of $g$. 
On the other hand letting $A$ and $B$ be trigonometric functions as in the previous example we see that
$ \int A(x) B(F^{W_N} x) dx=\delta_{W_N, 0}$ whence 
$$ \mathbb{E}\left[\int A(x) B(F^{W_N} x) dx\right]=\Prob(W_N=0). $$
Since 
\[
\mathbb{P}(W_N=0)\geq \sum_{k=N}^{2N} \mathbb{P} (W_{N-1}=k) \mathbb{P}(\omega_{N-1}=-k)\geq C N^{-2},
\] 
the annealed correlations for this system decay only polynomially.

(b) Now define $f_\omega$ as in part (a) but suppose that $\omega_n$ takes values $\pm 1$ with probability 1/2. Then the quenched Central Limit Theorem holds, but the annealed one fails.
 Indeed, note that in this case
\[
S_N A(x, \omega)=\sum_{n=-N}^N \ell(n, N, \omega) A(g^n x),
\]
where $\ell(n, N, \omega)$ is the number of times $K\leq N$ such that $W_k=n.$
Let $V_N(\omega)=\|S_N A(\cdot, \omega)\|_{L^2}^2$ denote the quenched variance.
It follows from \cite{DDKN3} that for a.e.~$\omega$, 
$S_N A(x, \omega)/\sqrt{V_N}$ converges to the normal distribution as $N\to\infty$. 
In fact \cite[Lemma 4.2]{DDKN3} only claims that there is a set $X_N\subset \mathbb{T^d}$ of $\omega$s such that 
$\DS \lim_{N\to\infty} \vol(X_N^c)=0$ ($X_N^c$ denotes the complement of $X_N$) and for $\omega\in X_N$ the distance between the law of $S_N A(\cdot, \omega)$
and the normal distribution converges to zero,  but it does not say that almost every $\omega$ belongs to all $X_N$
with $N\geq N(\omega).$ This weaker statement was sufficient for the purposes of \cite{DDKN3}. However, in fact the proof
of Lemma 4.2 also gives the stronger conclusion described above.
This is because the proof in \cite{DDKN3} proceeds by verifying the conditions of \cite{BG20}, which are abstracted in that paper as \cite[Prop.~2.1]{DDKN3}.
The only condition of that proposition
that was not shown in \cite{DDKN3} to hold almost surely is condition (b). In our setting this condition
amounts to showing that
\begin{equation}
\label{EqFLRW}
\max_n \ell^2(n, N)/V_N\to 0\quad\text{as}\quad N\to\infty.
\end{equation}
To see that this holds almost surely note that it was observed in \cite{DDKN3} that $\max_n \ell(n,N)<N^{0.51}.$
Also with probability 1, $\ell(n,N)=0$ for $|n|\geq N^{0.51}$ for large $N$ since $\DS \max_{n\leq N} |W_n|\leq N^{0.51}$
as follows, for example, from the Azuma's inequality. Finally $\DS \sum_n\ell(n, N)=N$.
Combining the last three facts we easily obtain \eqref{EqFLRW},  and thus we obtain the quenched CLT.

On the other hand, \cite[Proposition 4.1]{DDKN3} also shows that $\sqrt{V_N}/N^{3/4}$ converges in law as $N\to\infty$
to $\|\mathfrak{l}\|_{L^2}$ where $\mathfrak{l}$ is the local times of a Brownian motion started from $0$ 
at time $1$.
It follows that $S_N A/N^{3/4}$ converges in law as $N\to\infty$ to $\fN\times \mathfrak{l}$ where
$\fN$ has normal distribution, $\mathfrak{l}$ is the local time as above and $\fN$ and $\mathfrak{l}$ are independent
(this last result extends \cite{KS79} where a special observable $A$ was considered). In particular, annealed central limit theorem fails as this distribution is not normal.
\end{example}

We note that Examples \ref{ExT-TInv}(a) and (b) are special cases of so called {\em generalized $(T, T^{-1})$ transformations}.
More information on limit theorems for these systems can be found in \cite{dolgopyat2022mixing, DDKN2}. 

 Next, we will give an example that shows that even if annealed exponential mixing holds for the two point motion on all $\mc{H}^s_0$, $s\ge 0$, that there still might not be quenched exponential mixing for the $1$-point motion on $L^2$. 

\begin{example}
Suppose that $\mu$ is a measure on $\SL(2,\mathbb{Z})$ and consider associated random dynamics on $\mathbb{T}^2$. Due to, e.g.~ \cite[Thm.~1.1, Cor.~1.2]{dewitt2025conservative}, if $\mu$ is coexpanding on average forwards and backwards, then the map $\phi\mapsto \int \phi\circ A\,d\mu(A)$ has spectral gap on $L^2(\vol)$. As the $2$-point motion is also coexpanding on average, it follows that the $2$-point motion has spectral gap on $L^2$ as well. In particular the random dynamics of the $1$ and $2$-point motion are annealed exponentially mixing on $\mc{H}^s_0$, for all $s\ge 0$. However, the quenched random dynamics on $L^2$ is an isometry because the dynamics is volume preserving. 
Hence $F_\omega^N$ can not satisfy \eqref{EqQEMix} for
$B=A\circ F_\omega^{-N}$.
\end{example}

\section{Preliminaries}\label{sec:prelims}
In this section we recall some standard facts and introduce notation that will be used below. 

\subsection{Harmonic analysis} We will make extensive use of the Sobolev spaces $\mathcal{H}^p_0$. For many useful facts about these spaces see \cite{shubin2001pseudodifferential} or \cite{lefeuvre2024microlocal}. In particular, in all the arguments below we will work with a fixed 
basis $M$ of $L^2(M)$. These are the eigenfunctions $\varphi_i$ of the Riemannian Laplacian $\Delta$. We fix a basis $\varphi_i$ of eigenfunctions of the Laplacian $\Delta$ that are normalized so that $\|\varphi_i\|_{L^2}=1$. For a zero integral function $A$ write $\DS A=\sum_i a_i \varphi_i$, where each $a_i\in \R$. Then we define the $s$-Sobolev norm by:
\begin{equation}\label{eqn:H_s_defn}
\|A\|_{\mathcal{H}^s_0}^{2}=\sum_{i\in \mathbb{N}} \abs{a_i}^2\lambda_i^{2s}. 
\end{equation}

Below we will use some basic estimates on these functions, such as 
\begin{equation}\label{eqn:C_0_phi_i}
\|\varphi_i\|_{\mathcal{H}^p}=\lambda_i^{p/2} \text{ and }\|\varphi_i\|_{C^0}\le C\lambda_i^{d/2}.
\end{equation}
The first estimate holds since $\varphi_i$ are eigenfunctions of the Laplacian, while the seconds follows from the Sobolev Embedding Theorem (Lemma \ref{LmSobolev}). 

We will  use of the Weyl law for the eigenvalue of the Laplacian. One consequence is the following, which we state as a lemma as we will use it several times.
\begin{lemma}\label{lem:weyl_law_sum}
(Weyl Law)
Suppose that $M$ is a Riemannian manifold of dimension $d$ and $\{\lambda_i\}_{i\in \mathbb{N}}$ are the eigenvalues of the Laplacian. Then there exists $C$ such that the number of eigenvalues of norm at most $\lambda$ is at most $C\lambda^{d/2}$. 
In particular,
\[
\sum_{i} \lambda_i^t 
\]
is finite as long as $t<-d/2$.
\end{lemma}
\begin{proof}
To begin, let $b_n$ the number of eigenvalues of magnitude less than or equal to $n$. Then the sum in question is bounded above by 
$\sum_{n\in\mathbb{N}} n^{-\alpha}(b_n-b_{n-1})$  where $\alpha=-t$. 
Summation by parts shows that
\[
\sum_{n=0}^N n^{-\alpha}(b_n-b_{n-1})=(N+1)^{-\alpha}b_{N+1}-\sum_{n=0}^N((n+1)^{-\alpha}-n^{-\alpha})b_{n+1}.
\]
By the Weyl Law, $b_n\le n^{d/2}$. Thus the sum is convergent as long as $\alpha>d/2$.
\end{proof}
We will also use the usual Sobolev embedding theorem:

\begin{lemma}
\label{LmSobolev}
(Sobolev embedding theorem) Let $M$ be a closed, smooth Riemannian manifold. Then:
$C^s(M)\subset H^s(M)\subset C^{s-d/2}(M)$. 
\end{lemma}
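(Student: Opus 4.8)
\medskip
\noindent\emph{Proof strategy.} The plan is to reduce both inclusions to the corresponding statements on $\R^d$ and then argue by Fourier analysis. First I would fix a finite atlas $\{(U_j,\kappa_j)\}_{j=1}^m$ of $M$ and a subordinate smooth partition of unity $\{\chi_j\}$. Multiplication by a smooth compactly supported function and precomposition with a diffeomorphism are bounded operators on every $H^s$ --- this can be read off from the description of $H^s$ as the domain of $(1-\Delta)^{s/2}$, or from the pseudodifferential calculus of \cite{shubin2001pseudodifferential,lefeuvre2024microlocal} --- and likewise on every $C^s$ by the Leibniz and chain rules for H\"older norms. Consequently $\|u\|_{H^s(M)}$ is comparable to $\sum_j\|(\chi_j u)\circ\kappa_j^{-1}\|_{H^s(\R^d)}$, and similarly for $C^s$; since each $(\chi_j u)\circ\kappa_j^{-1}$ is compactly supported, it suffices to prove, for $u\in C^\infty_c(\R^d)$, that $\|u\|_{H^s(\R^d)}\le C\|u\|_{C^s}$ and $\|u\|_{C^{s-d/2}}\le C\|u\|_{H^s(\R^d)}$.

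For the second inclusion --- the substantive one --- I would use $\|u\|_{H^s(\R^d)}^2\asymp\int\langle\xi\rangle^{2s}|\hat u(\xi)|^2\,d\xi$, where $\langle\xi\rangle=(1+|\xi|^2)^{1/2}$. Write $s-\tfrac d2=k+\alpha$ with $k\in\N$ and $\alpha\in[0,1)$. For any multi-index $\beta$ with $|\beta|\le k$ we have $\widehat{\partial^\beta u}=(i\xi)^\beta\hat u$, and Cauchy--Schwarz gives
\[
\int|\xi|^{|\beta|}\,|\hat u(\xi)|\,d\xi\ \le\ \left(\int\langle\xi\rangle^{2|\beta|-2s}\,d\xi\right)^{1/2}\|u\|_{H^s},
\]
whose right-hand side is finite because $2(s-|\beta|)\ge 2(s-k)=d+2\alpha>d$ (valid for every $|\beta|\le k$ when $\alpha>0$). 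Hence each $\partial^\beta u$ with $|\beta|\le k$ has an absolutely convergent inverse Fourier integral, so it is bounded and continuous and $u\in C^k$. For the H\"older seminorm of the top derivatives I would estimate, for $|\beta|=k$,
\[
\bigl|\partial^\beta u(x)-\partial^\beta u(y)\bigr|\ \le\ \int|\xi|^{k}\,|\hat u(\xi)|\,\bigl|e^{i\xi\cdot x}-e^{i\xi\cdot y}\bigr|\,d\xi,
\]
split the integral at $|\xi|=|x-y|^{-1}$, use $\bigl|e^{i\xi\cdot x}-e^{i\xi\cdot y}\bigr|\le|x-y|\,|\xi|$ on the low-frequency part and $\le 2$ on the high-frequency part, and apply Cauchy--Schwarz against $\langle\xi\rangle^s\hat u\in L^2$ in each part; a routine computation of the resulting radial integrals produces the bound $\le C|x-y|^\alpha\|u\|_{H^s}$, so $u\in C^{k,\alpha}=C^{s-d/2}$. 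When $s-\tfrac d2$ is a nonnegative integer the same computation instead gives $u\in C^{s-d/2-\eps}$ for every $\eps>0$, which is how the statement should be read in that boundary case; in particular $H^d(M)\hookrightarrow C^0(M)$, the form used in \eqref{eqn:C_0_phi_i}.

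For the first inclusion, if $s=m\in\N$ it is immediate: a compactly supported $u\in C^m$ has $\partial^\alpha u$ continuous with compact support, hence in $L^2$, for all $|\alpha|\le m$, and $\|u\|_{H^m}^2\asymp\sum_{|\alpha|\le m}\|\partial^\alpha u\|_{L^2}^2$. For non-integer $s=m+\sigma$ I would instead feed $\partial^\alpha u$ ($|\alpha|=m$) into the Gagliardo-seminorm formula for $H^\sigma$ on the bounded support, which gives the continuous embedding $C^s(M)\hookrightarrow H^{s'}(M)$ for every $s'<s$; this slightly weaker (and sharp) form is what is actually used below. The only step I expect to require care is the bookkeeping at the integer/boundary values of the exponents --- where a genuine $\eps$-loss is unavoidable and the crude Gagliardo bound diverges --- together with the coordinate-invariance of the spaces that legitimizes the localization in the first paragraph; everything analytic reduces to the one Cauchy--Schwarz estimate above. \qed
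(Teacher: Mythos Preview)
The paper gives no proof of this lemma at all --- it is simply quoted as a classical fact, with the references \cite{shubin2001pseudodifferential,lefeuvre2024microlocal} pointed to earlier in the section for background. Your proposal therefore supplies something the paper does not attempt: a complete, standard Fourier-analytic argument via localization to $\R^d$ and Cauchy--Schwarz against $\langle\xi\rangle^s\hat u$.

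Your argument is correct, and you rightly flag the two places where the stated inclusions are literally true only up to an $\eps$-loss: the embedding $H^s\hookrightarrow C^{s-d/2}$ fails at the endpoint when $s-d/2$ is a nonnegative integer, and the inclusion $C^s\hookrightarrow H^s$ genuinely fails for non-integer $s$ (in Besov terms, $C^s=B^s_{\infty,\infty}$ embeds in $B^s_{2,\infty}$ on a compact manifold but not in $B^s_{2,2}=H^s$). You also correctly observe that the paper's actual uses of the lemma --- the bound $\|\varphi_i\|_{C^0}\le C\lambda_i^{d/2}$ in \eqref{eqn:C_0_phi_i} and the chain $\|\psi\|_{\cH^p}\le\|\psi\|_{C^p}\le C\|\varphi_i\|_{C^p}^2\le C\|\varphi_i\|_{\cH^{p+d/2}_0}^2$ in the proof of Theorem~\ref{ThQMix} --- are robust to this $\eps$-loss, since the exponents there can be perturbed freely (cf.\ Corollary~\ref{cor:annealed_interpolation}). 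So your more careful reading of the statement is an improvement, not a discrepancy.
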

Finally, we record some estimates about the smoothing operators given by projecting onto different parts of the spectrum. For $\lambda\ge 0$ and $\phi\in L^2(M)$, we let $\mc{T}_{\lambda}\phi$ and $\mc{R}_{\lambda}\phi$ to be the projection onto the modes of norm $\le \lambda$ and greater than $\lambda$ respectively. Then $\phi=\mc{T}_{\lambda}\phi+\mc{R}_{\lambda}\phi$. Further, if $\phi\in \mc{H}^s(M)$, $s'\ge s$, then the following estimates come from the definition of the Sobolev norm:
\begin{equation}\label{eqn:smoothing_operator_estimates}
\|\mc{R}_{\lambda}\phi\|_{L^2}^2\le \lambda^{-2s}\|\phi\|_{\mc{H}^s}^2 \text{ and } \|\mc{T}_{\lambda}\phi\|_{\mc{H}^{s'}}^2\le \lambda^{2(s'-s)}\|\phi\|_{\mc{H}^s}^2.
\end{equation}

\begin{remark}
Below, we will keep track of some constants involved in exponential mixing. These constants depend on the specific choice of norm on $\mc{H}^s$. The exponential rate does not depend on this, but the constant $C$ does. Hence it may seem strange to keep track of it. However, as these Sobolev spaces naturally arise from the eigenfunctions of the Laplacian, it seems natural to keep track of constants by using the specific norm on $\mc{H}^s$ defined above as there are natural comparisons between the different Sobolev norms that hold when the norms are defined in this canonical way.
\end{remark}

Next we will record another standard lemma, which will be helpful because the H\"older spaces are not separable.

\begin{lemma}\label{lem:dense_C_alpha}
Suppose that $M$ is a closed Riemannian manifold. Then for any $\alpha>\beta>0$, the inclusion of $C^{\alpha}(M)$ into $C^{\beta}(M)$ is compact. Moreover, in the $C^{\beta}$ topology $C^{\alpha}(M)$ has a countable dense subset. In particular, $C^{\infty}(M)$ functions are dense in $C^{\alpha}(M)$ in the $C^{\beta}$-topology.
\end{lemma}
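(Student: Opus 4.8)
The plan is to prove the three assertions in turn: compactness of the inclusion, then $C^\beta$-density of $C^\infty(M)$, with separability a quick consequence.

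First I would establish compactness of the inclusion $C^\alpha(M)\hookrightarrow C^\beta(M)$. Write $\alpha=k+\alpha'$ and $\beta=\ell+\beta'$ with $k\ge\ell$ nonnegative integers and $\alpha',\beta'\in[0,1)$, and fix a finite atlas so that the intrinsic $C^s$ norms are comparable to the chartwise ones. Given a sequence $(f_n)$ bounded in $C^\alpha(M)$, all of its partial derivatives of order $\le\ell$ are uniformly bounded and uniformly Hölder continuous of exponent $\min(\alpha-\ell,1)>0$, so by Arzel\`a--Ascoli together with a diagonal argument I can extract a subsequence, not relabeled, such that $f_n$ and all its derivatives up to order $\ell$ converge uniformly to a limit $f\in C^\ell(M)$. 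To upgrade this to convergence in $C^\beta$, I would apply, to each order-$\ell$ derivative $h$ of $f_n-f$ in each chart, the elementary identity
\[
\frac{|h(x)-h(y)|}{|x-y|^{\beta'}}=\left(\frac{|h(x)-h(y)|}{|x-y|^{\min(\alpha-\ell,1)}}\right)^{\theta}\bigl(|h(x)-h(y)|\bigr)^{1-\theta},\qquad \theta=\frac{\beta'}{\min(\alpha-\ell,1)}\in[0,1),
\]
valid since $\beta<\alpha$ forces $\beta'<\min(\alpha-\ell,1)$. The first factor stays bounded by the uniform $C^\alpha$ bound, while the second tends to $0$ uniformly in $x\ne y$ because the order-$\ell$ derivatives of $f_n-f$ converge to $0$ uniformly; hence $\|f_n-f\|_{C^\beta}\to0$. (When $\beta'=0$ there is nothing to do.)

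Next I would show $C^\infty(M)$ is dense in $C^\alpha(M)$ for the $C^\beta$ topology by mollification. Take $f_t=e^{t\Delta}f$, which is smooth for $t>0$; use the standard fact that the heat semigroup is uniformly bounded on $C^\alpha(M)$ for $t\in(0,1]$ (from heat-kernel estimates, or by localizing via a partition of unity and convolving with an approximate identity) and that $f_t\to f$ in $C^0(M)$. Applying the same interpolation estimate to $h=$ order-$\ell$ derivatives of $f_t-f$, whose $C^\alpha$ norm stays bounded, yields $\|f_t-f\|_{C^\beta}\to0$, so $C^\infty(M)$ is $C^\beta$-dense in $C^\alpha(M)$. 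For separability I would write $C^\alpha(M)=\bigcup_{n\in\mathbb{N}}\{\,f:\|f\|_{C^\alpha}\le n\,\}$; by the compactness just proved each such ball is a compact, hence separable, subset of $C^\beta(M)$, so $C^\alpha(M)$ is a countable union of $C^\beta$-separable sets and is itself $C^\beta$-separable. A countable dense set may moreover be taken inside $C^\infty(M)$ — e.g.\ finite rational linear combinations of the Laplace eigenfunctions $\varphi_i$, which are $C^\infty$-dense, hence $C^\beta$-dense by Lemma \ref{LmSobolev} — using the density of $C^\infty(M)$ established above.

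The work here is bookkeeping rather than ideas: one must track the integer parts of $\alpha$ and $\beta$ so that ``top derivative'' refers to the correct order, and one must invoke (or briefly verify, by the localization argument above) the uniform $C^\alpha$-boundedness of the mollifier $e^{t\Delta}$ on a general closed manifold. Neither point is serious, so I would expect the main obstacle to be merely presenting these routine facts cleanly.
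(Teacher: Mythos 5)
Your argument is correct, but it is organized differently from the paper's. The paper disposes of the lemma in a few lines by citing H\"ormander's Theorems A.9 and A.10, which quantify how well chartwise mollification of a $C^{\alpha}$ function approximates it in the $C^{\beta}$ norm; density of $C^{r}$ ($r\in\mathbb{N}$) in $C^{\alpha}$ with respect to the $C^{\beta}$ topology follows, and the countable dense subset is then inherited from the separability of $C^{r}$. You instead prove everything from scratch: compactness of the inclusion via Arzel\`a--Ascoli together with your interpolation identity for the $\beta'$-H\"older seminorm, density of $C^{\infty}$ via the heat semigroup (your parenthetical alternative of chartwise convolution with an approximate identity is essentially H\"ormander's route), and separability as a countable union of $C^{\beta}$-precompact balls rather than from separability of $C^{r}$. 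Your version buys self-containedness and an explicit proof of the compactness assertion, which the paper leaves implicit; the citation buys brevity. Two small points to tighten. First, in the compactness step the limit $f$ must be given the uniform H\"older bound on its order-$\ell$ derivatives (immediate, since pointwise limits preserve H\"older seminorm bounds) before you can say the first factor in your identity is bounded. Second, and more substantively, in the density step when $\alpha>1$ (so $\ell\ge 1$) your interpolation bound for the order-$\ell$ derivatives of $f_t-f$ requires those derivatives to converge uniformly, whereas you only invoke $f_t\to f$ in $C^{0}$. This is repaired in one line using the compactness you have already established: $\{f_t\}_{t\in(0,1]}$ is bounded in $C^{\alpha}$, so every sequence $t_n\to 0$ has a subsequence along which $f_{t_n}$ converges in $C^{\beta}$, and the limit must equal $f$ because of the $C^{0}$ convergence; hence $f_t\to f$ in $C^{\beta}$ (alternatively, invoke a standard interpolation inequality bounding $\|g\|_{C^{\beta}}$ by powers of $\|g\|_{C^{0}}$ and $\|g\|_{C^{\alpha}}$). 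With these repairs your proof is complete and fully adequate for the role the lemma plays in Step 3 of the proof of Theorem \ref{ThAnQCLT}.
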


\noindent This lemma follows from Thm.~A.9 and Thm.~A.10 in \cite{hormander1976boundary}, which estimate how well the mollification of a $C^{\alpha}$ function $u$ approximates $u$ in the $C^{\beta}$ norm.  
Those estimates show that
$C^r$ is dense in $C^\alpha$ for each pair $(\alpha, r)$.
As $C^r$ is separable for $r\in \mathbb{N}$, Lemma \ref{lem:dense_C_alpha} follows.

\subsection{Probability} 
 Throughout the rest of the paper, we will write $\mathbb{P}$ and $\mathbb{E}$ for the probability and expectation of a random variable; when we do this we are exclusively taking expectations over the random dynamics $\omega$. 

We will use  a couple of different concentration inequalities. The first one is Azuma's inequality.

\begin{proposition}\cite[Thm.~1.3.1]{steele1997probability}
\label{prop:azuma}
(Azuma's inequality) Suppose that $X_1,\ldots,X_n$ is a martingale difference sequence. Then 
\[
\mathbb{P}\left(\left|\sum_{i=1}^n X_i\right|\ge \lambda\right)\le 2\exp\left(\frac{-\lambda^2}{2\sum_{i=1}^n \|X_i\|^2_{L^{\infty}}}\right).
\]
\end{proposition}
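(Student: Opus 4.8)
The plan is to prove Azuma's inequality by the classical exponential-moment (Chernoff) method, combined with a conditioning argument that exploits the martingale structure. Write $S_k=\sum_{i=1}^k X_i$ and let $(\mathcal{F}_k)$ be the filtration with respect to which $(X_i)$ is a martingale difference sequence, so that $X_i$ is $\mathcal{F}_i$-measurable and $\mathbb{E}[X_i\mid \mathcal{F}_{i-1}]=0$. First I would reduce to a one-sided estimate: once one shows $\mathbb{P}(S_n\ge \lambda)\le \exp\bigl(-\lambda^2/(2\sum_i \|X_i\|_{L^\infty}^2)\bigr)$, applying the same bound to the martingale difference sequence $(-X_i)$ gives the matching bound for $\mathbb{P}(S_n\le -\lambda)$, and a union bound produces the factor $2$.

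For the one-sided bound, fix $t>0$. By Markov's inequality applied to $e^{tS_n}$, $\mathbb{P}(S_n\ge \lambda)\le e^{-t\lambda}\,\mathbb{E}[e^{tS_n}]$. The core step is to control $\mathbb{E}[e^{tS_n}]$ by peeling off one term at a time: conditioning on $\mathcal{F}_{n-1}$ and using that $e^{tS_{n-1}}$ is $\mathcal{F}_{n-1}$-measurable, $\mathbb{E}[e^{tS_n}]=\mathbb{E}\bigl[e^{tS_{n-1}}\,\mathbb{E}[e^{tX_n}\mid \mathcal{F}_{n-1}]\bigr]$. The inner conditional expectation is bounded using Hoeffding's lemma: if $Y$ is a (conditionally, given $\mathcal{F}_{n-1}$) mean-zero random variable with $|Y|\le c$, then $\mathbb{E}[e^{tY}]\le e^{t^2c^2/2}$. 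Applying this with $c=\|X_n\|_{L^\infty}$ — a deterministic bound, hence valid conditionally — yields $\mathbb{E}[e^{tS_n}]\le \mathbb{E}[e^{tS_{n-1}}]\,e^{t^2\|X_n\|_{L^\infty}^2/2}$, and iterating down to $S_0=0$ gives $\mathbb{E}[e^{tS_n}]\le \exp\bigl(\tfrac{t^2}{2}\sum_{i=1}^n \|X_i\|_{L^\infty}^2\bigr)$. Thus $\mathbb{P}(S_n\ge \lambda)\le \exp\bigl(-t\lambda+\tfrac{t^2}{2}\sum_i \|X_i\|_{L^\infty}^2\bigr)$, and choosing $t=\lambda/\sum_i \|X_i\|_{L^\infty}^2$ to minimize the exponent gives the claimed estimate.

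It remains to prove Hoeffding's lemma, which is the one genuinely substantive ingredient. Since $y\mapsto e^{ty}$ is convex, for $y\in[-c,c]$ one has $e^{ty}\le \tfrac{c-y}{2c}e^{-tc}+\tfrac{c+y}{2c}e^{tc}$; taking expectations and using $\mathbb{E}[Y]=0$ gives $\mathbb{E}[e^{tY}]\le \tfrac12(e^{tc}+e^{-tc})=\cosh(tc)$. Finally $\cosh(x)\le e^{x^2/2}$ follows by comparing power series term by term, since $(2k)!\ge 2^k k!$; alternatively one writes $\cosh(tc)=e^{\psi(tc)}$ and checks $\psi''\le 1$ by a short calculus argument.

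I would flag the only real subtlety: making sure the conditional version of Hoeffding's lemma is applied correctly, i.e.\ that $\|X_i\|_{L^\infty}$ is a non-random constant, so that conditionally on $\mathcal{F}_{i-1}$ the variable $X_i$ is still bounded by that same constant and still has mean zero. Everything else is routine. Since this is a textbook inequality, in the paper itself one simply cites \cite{steele1997probability} as is already done; the above merely records the standard argument for completeness.
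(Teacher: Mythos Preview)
Your argument is correct and is the standard Chernoff--Hoeffding proof of Azuma's inequality. The paper does not supply its own proof of this proposition; it merely states the inequality and cites \cite{steele1997probability}, exactly as you anticipated in your final paragraph.
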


The second result allows to control the growth of ergodic sums for rapidly mixing systems.

\begin{theorem}\label{thm:gaal-koksma}\cite[App.~1]{philipp1975}
(Gaal--Koksma Strong Law) Let $(X_n)_{n\in \N}$ be a sequence of centered random variables with finite variance. Suppose that there exist constants $C$ and $\sigma>0$ such that for all $m\ge 0$ and $n>0$,
\begin{equation}\label{eqn:gaal-koksma-hypothesis}
\mathbb{E}\left[\left(\sum_{m<j\le n+m}X_j\right)^2\right]\le C((m+n)^{\sigma}-m^{\sigma}),
\end{equation}
Then for each $\delta>0$ almost surely,
\begin{equation}
\sum_{j\le N} X_j=O(N^{\sigma/2}\ln^{\delta+2}(N)). 
\end{equation}
\end{theorem}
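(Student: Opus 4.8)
The plan is to establish the strong law by a dyadic block decomposition, which reduces the statement to a second-moment maximal estimate together with the Borel--Cantelli lemma. Since no martingale structure is assumed here (the only hypothesis is the $L^2$ bound on block sums), Doob's maximal inequality is unavailable, so I would instead chain dyadic blocks together in the spirit of the Rademacher--Menshov inequality. Throughout write $\Sigma_{m,n}=\sum_{m<j\le m+n}X_j$, so that \eqref{eqn:gaal-koksma-hypothesis} reads $\E[\Sigma_{m,n}^2]\le C\big((m+n)^\sigma-m^\sigma\big)$ and the quantity to be controlled is $\Sigma_{0,N}$.

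First I would set up the decomposition: for integers $k\ge 0$, $i\ge 0$ put $I_{k,i}=(i2^k,(i+1)2^k]$ and $T_{k,i}=\Sigma_{i2^k,\,2^k}=\sum_{j\in I_{k,i}}X_j$, so the hypothesis gives $\E[T_{k,i}^2]\le C\,2^{k\sigma}\big((i+1)^\sigma-i^\sigma\big)$. Using the binary expansion of $N$, for every $N\le 2^m$ the interval $(0,N]$ is a disjoint union of at most $m+1$ of the blocks $I_{k,i}$, of pairwise distinct lengths, each contained in $(0,2^m]$ (hence with $0\le i<2^{m-k}$). Therefore, setting $M_{k,m}=\max_{0\le i<2^{m-k}}\abs{T_{k,i}}$ and $\Theta_m=\max_{1\le N\le 2^m}\abs{\Sigma_{0,N}}$, I obtain the deterministic bound $\Theta_m\le\sum_{k=0}^{m}M_{k,m}$.

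The crux — and the only place where the exact form $(m+n)^\sigma-m^\sigma$ of the hypothesis is needed — is that telescoping makes the level-$k$ estimate independent of $k$:
\[
\E[M_{k,m}^2]\le\sum_{i=0}^{2^{m-k}-1}\E[T_{k,i}^2]\le C\,2^{k\sigma}\sum_{i=0}^{2^{m-k}-1}\big((i+1)^\sigma-i^\sigma\big)=C\,2^{k\sigma}(2^{m-k})^\sigma=C\,2^{m\sigma}.
\]
Applying Cauchy--Schwarz to the sum of $m+1$ terms then yields $\E[\Theta_m^2]\le (m+1)^2\,C\,2^{m\sigma}$.

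Finally I would run Borel--Cantelli along the dyadic scales. Fix $\delta>0$, set $a=\tfrac32+\tfrac\delta2$; by Chebyshev,
\[
\Prob\big(\Theta_m>2^{m\sigma/2}m^{a}\big)\le\frac{(m+1)^2\,C\,2^{m\sigma}}{2^{m\sigma}\,m^{2a}}=O\big(m^{-(1+\delta)}\big),
\]
which is summable in $m$, so almost surely $\Theta_m\le 2^{m\sigma/2}m^{a}$ for all $m\ge m_0(\omega)$. For arbitrary $N$, taking $m=\lceil\log_2 N\rceil$ gives $N\le 2^m\le 2N$ and $m=O(\ln N)$, whence $\abs{\Sigma_{0,N}}\le\Theta_m=O\big(N^{\sigma/2}(\ln N)^{a}\big)=O\big(N^{\sigma/2}\ln^{\delta+2}N\big)$, which is the assertion — in fact with the sharper log-exponent $\tfrac32+\tfrac\delta2$. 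I expect the main obstacle to be organizational rather than conceptual: the binary-expansion bookkeeping and tracking the constants uniformly in $k$ and $m$. The one genuinely load-bearing step is the displayed telescoping identity; if one weakened the hypothesis to $\E[\Sigma_{m,n}^2]\le C\,n^\sigma$, the per-level bounds would grow and this method would break down.
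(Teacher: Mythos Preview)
The paper does not prove this theorem; it is quoted as a known result from \cite[App.~1]{philipp1975} and used as a black box in the proof of Corollary~\ref{cor:pointwise_estimate}. Your argument is correct and is in fact the standard dyadic/Rademacher--Menshov chaining proof that appears in the cited reference: decompose $(0,N]$ into dyadic blocks via the binary expansion, bound the maximum over each level by the sum of second moments, exploit the telescoping $\sum_i\big((i+1)^\sigma-i^\sigma\big)=2^{(m-k)\sigma}$ to get a level-independent bound $C\,2^{m\sigma}$, and conclude by Cauchy--Schwarz and Borel--Cantelli along dyadic scales. Your remark that the load-bearing step is precisely the telescoping identity (and would fail under the weaker hypothesis $\E[\Sigma_{m,n}^2]\le Cn^\sigma$) is also to the point, and the sharper exponent $\tfrac32+\tfrac\delta2$ you obtain is genuine---the $2+\delta$ in the stated version is simply a convenient, non-optimal choice.
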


\section{Deriving annealed results from the quenched ones}\label{sec:annealed_from_quenched}
Here we recall the basic tools for deducing annealed results from the quenched ones. We will work in the general framework 
of skew products \eqref{Skew}.

\begin{proposition}
Suppose that the skew product \eqref{Skew} satisfies quenched exponential mixing \eqref{EqQEMix} on $\mc{H}^p_0(M)$ for some $\alpha>0$ and that corresponding 
prefactor $C(\omega)$ has a power tail:
$$ \mathbb{P}_{\bmu}(C(\omega)>R)\leq K/R^\kappa, $$
for some $K, \kappa>0$. Then annealed exponential mixing (Def.~\ref{defn:annealed_exponential_mixing_skew}) holds on $\mc{H}^p_0(M)$ with rate $\alpha'$ for any $0<\alpha'\le \alpha\kappa/(1+\kappa)$.
\end{proposition}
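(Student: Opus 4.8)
The plan is a standard truncation argument that balances the quenched estimate against an a priori bound. First I would record the trivial estimate: since each $f_\omega$, and hence each $F^N_\omega$, preserves volume, Cauchy--Schwarz gives
\[
\left|\int A(x) B(F^N_\omega x)\,dx\right| \le \|A\|_{L^2}\,\|B\circ F^N_\omega\|_{L^2} = \|A\|_{L^2}\,\|B\|_{L^2} \le C_0\,\|A\|_{\mathcal{H}^p_0}\|B\|_{\mathcal{H}^p_0}
\]
for every $\omega$ and every $N$, where $C_0$ depends only on $p$ and $M$ (one may take $C_0=\lambda_1^{-p}$ in terms of the smallest nonzero Laplacian eigenvalue, and $C_0=1$ when $p=0$). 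Combined with the hypothesis \eqref{EqQEMix}, for $\bmu$-a.e.\ $\omega$ and all $A,B\in\mathcal{H}^p_0(M)$ we obtain the pointwise-in-$\omega$ bound
\[
\left|\int A(x)B(F^N_\omega x)\,dx\right| \le \min\{C_0,\;C(\omega)e^{-\alpha N}\}\,\|A\|_{\mathcal{H}^p_0}\|B\|_{\mathcal{H}^p_0}.
\]

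Next I would pass to the annealed quantity via $|\mathbb{E}_{\bmu}[X]|\le\mathbb{E}_{\bmu}[|X|]$ and split the expectation over the events $\{C(\omega)\le R\}$ and $\{C(\omega)>R\}$ for a threshold $R=R(N)$ to be chosen. On the first event the quenched bound contributes at most $R e^{-\alpha N}\|A\|_{\mathcal{H}^p_0}\|B\|_{\mathcal{H}^p_0}$, and on the second the a priori bound together with the tail hypothesis $\mathbb{P}_{\bmu}(C(\omega)>R)\le K R^{-\kappa}$ contributes at most $C_0 K R^{-\kappa}\|A\|_{\mathcal{H}^p_0}\|B\|_{\mathcal{H}^p_0}$. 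Hence
\[
\left|\mathbb{E}_{\bmu}\left(\int A(x)B(F^N_\omega x)\,dx\right)\right| \le \left(R e^{-\alpha N} + C_0 K R^{-\kappa}\right)\|A\|_{\mathcal{H}^p_0}\|B\|_{\mathcal{H}^p_0}.
\]

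Finally I would optimize in $R$: the choice $R=e^{\alpha N/(1+\kappa)}$ makes both terms equal to $e^{-\alpha\kappa N/(1+\kappa)}$, giving annealed exponential mixing with constant $C=1+C_0 K$ and rate exactly $\alpha\kappa/(1+\kappa)$; any slower rate $0<\alpha'\le\alpha\kappa/(1+\kappa)$ then follows a fortiori with the same constant. There is essentially no obstacle here beyond bookkeeping — the only genuine choice is the truncation level, which is forced by balancing the exponentially small quenched contribution against the polynomially small tail, and the degradation from $\alpha$ to $\alpha\kappa/(1+\kappa)$ is precisely the cost of that trade-off (it tends to $0$ as $\kappa\to 0$ and recovers $\alpha$ as $\kappa\to\infty$).
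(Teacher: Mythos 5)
Your proof is correct and follows essentially the same route as the paper: bound the correlation pointwise in $\omega$ by the minimum of the quenched estimate and a trivial (Cauchy--Schwarz, volume-preservation) bound, split the expectation at a threshold $R=R(N)$ using the power tail on $C(\omega)$, and optimize, with your choice $R=e^{\alpha N/(1+\kappa)}$ matching the paper's optimal exponent $\beta=\alpha/(1+\kappa)$ and yielding the rate $\alpha\kappa/(1+\kappa)$. The only difference is cosmetic bookkeeping (the paper simply uses $1$ as the a priori bound where you carry an explicit constant $C_0$).
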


\begin{proof}
Note that $\left|\int A(x) B(F^N_\omega x) dx\right|\leq \min\left\{C(\omega) e^{-\alpha N}, 1 \right\}
\|A\|_{\mathcal{H}^p_0} \|B\|_{\mathcal{H}^p_0}$. Let $\beta\ge 0$, then the expectation of the minimum is bounded by 
\begin{equation}\label{eqn:upperbound_12}
e^{(\beta-\alpha)N}+\mathbb{P}_{\bmu}(C(\omega)>e^{\beta N})\le e^{(\beta-\alpha)N}+Ke^{-\kappa\beta N}. 
\end{equation}
The right hand side is dominated by whichever term has the bigger exponent. The choice of $\beta$ that maximizes $\min\{\alpha-\beta,\kappa \beta\}$ is $\beta=\alpha/(1+\kappa)$. In this case the right hand side of \eqref{eqn:upperbound_12} is of order $e^{-\alpha\kappa/(1+\kappa)N}$.
\end{proof}

The next result allows us to obtain the annealed Central Limit Theorem from the quenched one. The following result is basically \cite[Lemma 5.6]{DDKN2} (The proof there also works for $\mc{H}^s$). The proof in the present case is slightly simpler as we do not consider functions $A\colon \Omega\times M\to \R$.

\begin{proposition}
Suppose that the skew product \eqref{Skew} satisfies the quenched Central Limit Theorem (Def.~\ref{defn:quenched_CLT}), 
Fix a function $A\colon M\to \R$ in  $\mc{H}^s(M)$, $s\ge 0$, or $C^{\alpha}(M)$, $\alpha>0$, and
suppose that the quenched
variance satisfies that $q_N(\omega, A)/\sqrt{N}$ converges in law as $N\to\infty$ to a constant $q=q(A)$ while
the quenched drift satisfies that $\frac{a_N(\omega, A)}{\sqrt{N}}$ converges as $N\to\infty$ to a normal distribution with
zero mean and variance $\mathfrak{D}(A)$.
Then the annealed Central Limit Theorem (Def.~\ref{defn:annealed_CLT}) holds, that is, if $\omega$ is distributed according to $\bmu$ and
$x$ is uniformly distributed on $M$, then 
$\frac{S_N A(\omega, x)}{\sqrt{N}}$ converges in law as $N\to\infty$ to a normal random variable with zero mean 
and variance $\cD(A)=\mathsf{D}(A) q(A)+\mathfrak{D}(A).$
\end{proposition}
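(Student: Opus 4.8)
The plan is to prove the statement about deriving the annealed CLT from the quenched one by a characteristic function (Fourier) argument, combined with a Slutsky-type lemma to handle the drift and the variance separately. The key structural fact is that the quenched CLT gives us, for $\bmu$-a.e.\ $\omega$, convergence of $\frac{S_N A(\cdot,\omega) - a_N(\omega)}{q_N(\omega)}$ to $\mathcal{N}(0,\mathsf{D}(A))$, where here $x$ is the source of randomness with respect to $\vol$. I would first rewrite
\[
\frac{S_N A(\omega,x)}{\sqrt N} = \frac{q_N(\omega)}{\sqrt N}\cdot \frac{S_N A(\cdot,\omega)-a_N(\omega)}{q_N(\omega)} + \frac{a_N(\omega)}{\sqrt N},
\]
and observe that the three pieces are: a scalar $q_N(\omega)/\sqrt N$ which converges in law (hence in probability along a.e.\ realization, being eventually essentially constant $q(A)$ — more precisely converges in $\bmu$-law to the constant $q(A)$), a quantity converging in $\vol$-law to a fixed Gaussian for a.e.\ $\omega$, and the drift term $a_N(\omega)/\sqrt N$ which converges in $\bmu$-law to $\mathcal{N}(0,\mathfrak{D}(A))$.

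The main step is to compute the joint annealed characteristic function. For $\xi\in\R$ consider
\[
\EXP_{\bmu}\!\left(\int_M e^{i\xi S_N A(\omega,x)/\sqrt N}\,dx\right)
=\EXP_{\bmu}\!\left(e^{i\xi a_N(\omega)/\sqrt N}\int_M e^{i\xi \frac{q_N(\omega)}{\sqrt N}\cdot\frac{S_NA(\cdot,\omega)-a_N(\omega)}{q_N(\omega)}}\,dx\right).
\]
For a.e.\ fixed $\omega$, the inner $\vol$-integral is the characteristic function of $\frac{S_NA(\cdot,\omega)-a_N(\omega)}{q_N(\omega)}$ evaluated at $\xi q_N(\omega)/\sqrt N$. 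Since $q_N(\omega)/\sqrt N$ is bounded in probability (it converges in law) and the characteristic functions converge uniformly on compacts to $e^{-\mathsf{D}(A)t^2/2}$ by Lévy's continuity theorem together with the quenched CLT, the inner integral is well-approximated by $e^{-\mathsf{D}(A)\xi^2 q_N(\omega)^2/(2N)}$. Then I would argue that $q_N(\omega)^2/N \to q(A)^2$ in $\bmu$-probability, so that the inner integral converges in $\bmu$-probability to the constant $e^{-\mathsf{D}(A)q(A)^2\xi^2/2}$. Being bounded by $1$, dominated convergence lets me pull this constant out, leaving
\[
\EXP_{\bmu}\!\left(\int_M e^{i\xi S_N A(\omega,x)/\sqrt N}\,dx\right)
\longrightarrow e^{-\mathsf{D}(A)q(A)^2\xi^2/2}\cdot\lim_{N\to\infty}\EXP_{\bmu}\!\left(e^{i\xi a_N(\omega)/\sqrt N}\right)
= e^{-\mathsf{D}(A)q(A)^2\xi^2/2}\cdot e^{-\mathfrak{D}(A)\xi^2/2},
\]
using the hypothesis that $a_N(\omega)/\sqrt N \Rightarrow \mathcal{N}(0,\mathfrak{D}(A))$. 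The right side equals $e^{-\cD(A)\xi^2/2}$ with $\cD(A)=\mathsf{D}(A)q(A)^2+\mathfrak{D}(A)$; one final appeal to Lévy's continuity theorem gives the annealed CLT. (I would note the minor mismatch with the paper's stated $\cD(A)=\mathsf{D}(A)q(A)+\mathfrak{D}(A)$: presumably $q$ denotes the limit of $q_N^2/N$ rather than $q_N/\sqrt N$, or there is a typo; I would adopt whichever convention makes the bookkeeping consistent.)

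The hard part will be justifying the interchange of the inner $\vol$-limit with the outer $\bmu$-expectation with the required uniformity: the quenched CLT only gives convergence of characteristic functions at each fixed argument for a.e.\ $\omega$, but here the argument $\xi q_N(\omega)/\sqrt N$ is itself random and $N$-dependent. The clean way around this is to first note that, for a.e.\ $\omega$, $\xi\mapsto \int_M e^{i\xi t \frac{S_NA-a_N}{q_N}}\,dx$ converges to $e^{-\mathsf{D}(A)t^2/2}$ uniformly for $t$ in any compact set (pointwise convergence of characteristic functions to a continuous limit is automatically uniform on compacts), then use tightness of $q_N(\omega)/\sqrt N$ to restrict $t=\xi q_N(\omega)/\sqrt N$ to such a compact set off an event of small $\bmu$-measure, and finally use the uniform bound $1$ on the moduli to control the exceptional event. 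This is the same mechanism as in \cite[Lemma 5.6]{DDKN2}, so I would largely follow that argument, simplified by the absence of $\omega$-dependence in $A$.
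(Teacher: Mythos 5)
Your argument is correct and is essentially the proof the paper itself relies on: the paper gives no in-text proof but defers to \cite[Lemma 5.6]{DDKN2}, whose mechanism (condition on $\omega$, use a.e.-uniform-on-compacts convergence of the quenched characteristic function at the random argument $\xi q_N(\omega)/\sqrt N$, pull out the resulting constant by boundedness, then multiply by the characteristic function of the drift) is exactly what you carry out. Your side remark is also accurate: under the hypotheses as literally stated the limiting variance is $\mathsf{D}(A)\,q(A)^2+\mathfrak{D}(A)$, so the paper's formula $\cD(A)=\mathsf{D}(A)q(A)+\mathfrak{D}(A)$ should be read with $q(A)$ denoting the limit of $q_N^2(\omega)/N$ (or as a harmless typo), not a gap in your proof.
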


\section{Exponential Mixing}\label{sec:exponential_mixing}
Here we show that exponential mixing of the two point motion implies quenched exponential mixing of the one-point motion. The idea is that one can study the decay of correlations for a basis of $\cH^p_0$ comprised of eigenfunctions of the Laplacian, and deduce that most words exhibit good decay for all the low modes in this basis.

\begin{proof}[Proof of Theorem \ref{ThQMix}]
What then remains is the case that $p>0$ and we must conclude that we may take $s>0$ in the conclusion. By interpolation (Lemma \ref{lem:interpolation_of_mixing}), it is sufficient to prove the result for $s$ sufficiently large, so we assume in the computations below that
$s>p+(3d/2)$.

Let $\varphi_j$ be an orthonormal basis consisting of eigenfunctions of $\Delta$. Then
$\DS \Delta \varphi_j\!\!=\!\!\lambda_j^2 \varphi_j,$ $\|\varphi_j \|_{L^2}=1$ and $\|\varphi_j\|_{\cH^{p}_0}={\lambda_j^p}$.
Denote
\begin{equation}\label{eqn:defn_rho}
\rho_{i,j, n, k}=\int (\varphi_i\circ F^n)  (\varphi_j\circ F^{n+k}) dx.
\end{equation} 
Then because $\rho_{i,j,n,k}$ and $\rho_{i,j,0,k}$ have the same distribution, 
$$ \EXP[\rho_{i,j, n, k}] =\EXP[\rho_{i,j, 0, k}]=O\left(\lambda_i^{p}  \lambda_j^{p}  e^{-\alpha k}\right), $$
$$ \EXP[\rho_{i,j, n, k}^2] =\EXP[\rho_{i,j, 0, k}^2]=\iint \varphi_i(x) \varphi_i(y) \varphi_j(F^k x)\varphi_j(F^k y) dxdy  
=O\left(\lambda_i^{2p+d}  \lambda_j^{2p+d}  e^{-\alpha k}\right) $$
where we have used $2$-point mixing \eqref{EqAn2Mix} for the function $\psi(x,y)=\varphi_i(x) \varphi_i(y)$, which satisfies
$$\|\psi\|_{\cH^p}\leq \|\psi\|_{C^p}\leq C \|\varphi_i\|_{C^p}^2\leq 
C \|\varphi_i\|_{\cH^{p+d/2}_0}^2=C\left(\lambda_i^{p+d/2}\right)^2.
$$
See, e.g.~\cite[Thm.~A.7]{hormander1976boundary} for the second inequality, 
$\|\varphi_1\varphi_2\|_{C^p}\le C\|\varphi_1\|_{C^p}\|\varphi_2\|_{C^p}$. 

Let $0<\beta<\alpha/2$, then by Chebyshev's inequality, for any $D>0$,
\begin{equation}\label{eqn:rho_chebyshev}
\Prob\left(|\rho_{i,j, n, k}|>nD \lambda_i^{p+t} \lambda_j^{p+t} e^{-\beta k}\right)\leq C D^{-2} n^{-2} 
 \lambda_i^{d-2t} \lambda_j^{d-2t} e^{-(\alpha-2\beta)k}. 
 \end{equation}
 From Lemma \ref{lem:weyl_law_sum},
 $\sum_i \lambda_i^{-\alpha}$ is finite for $\alpha>d/2$. Thus summing over $i,j,n,k$, we see that as long as $t>d$, there exists $C'$ such that for every $D$, we have that 
 \begin{equation}\label{eqn:ijnk}
\Prob\left(|\rho_{i,j, n, k}|>nD \lambda_i^{p+t} \lambda_j^{p+t} e^{-\beta k} 
\text{ for some } i,j,n,k\right)\leq D^{-2}C'.
 \end{equation}
 
 Let us now consider the mixing of a word $\omega$ where there exists $D>0$ such that for all $i,j,n,k$,
 \begin{equation}
 \label{BasisMixing}
|\rho_{i,j, n, k}|\leq Dn \lambda_i^{p+t} \lambda_j^{p+t} e^{-\beta k}.
\end{equation}
Decompose $A=\sum_i a_i \varphi_i,$
$B=\sum_j b_j \varphi_j.$ We will estimate 
$\int (A\circ F^n) (B\circ F^{n+k})\,dx$ by multiplying these series term by term; to do so we first check that things are absolutely summable as long as $s$ is sufficiently large:
$$
\sum_{i,j} \abs{a_i}\abs{b_j}\|\pez{\varphi_i\circ F^n}\pez{\varphi_j\circ F^{n+k}}\|_{L^{\infty}}
\le \left(\sum_i \abs{a_i}\right)\left(\sum_j \abs{b_j}\right)
$$$$
<\left(\sum_i \abs{a_i}^2\lambda^{2s}\right)^{1/2}\left(\sum_i \abs{b_i}^2\lambda^{2s}\right)^{1/2}\left(\sum_i \lambda_i^{-2s}\right),
$$
which is finite as long as $s>d$ by the Weyl Law (Lemma \ref{lem:weyl_law_sum}). Thus by dominated convergence and \eqref{BasisMixing} it follows that
that
\begin{align*}
\left| \int A(F^N x) B(F^{N+k} x) dx \right|&\leq
DN e^{-\beta k} \sum_{i, j} \abs{a_i}\abs{b_j} \lambda_i^{p+t}  \lambda_j^{p+t} \\
&=DN e^{-\beta k} \left[ \sum_{i} \abs{a_i} \lambda_i^{p+t} \right] \left[\sum_j  \abs{b_j} \lambda_j^{p+t}
\right].
\end{align*}
Note that 
\begin{equation*}
\sum_{i} |a_i| \lambda_i^{p+t}\leq \left(\sum_i |a_i|^2 \lambda_i^{2s}\right)^{1/2} \left(\sum_i \lambda_i^{2p+2t-2s}\right)^{1/2}
\leq \|A\|_{\cH^s_0}  \left(\sum_i \lambda_i^{2p+2t-2s}\right)^{1/2}. 
\end{equation*}
As before, by the Weyl Law
the last term is finite for $s>p+t+d/2.$
If this condition  on $s$ holds, then we obtain the same estimate on $\sum_j  \abs{b_j} \lambda_j^{p+t}$ as well.
Note that if $s>p+(3d/2)$, we can choose $t$ so that both $t>d$ and $s>p+t+(d/2)$, so the above estimate holds and we obtain that 
\[
\abs{\int A(F^N x) B(F^{N+k} x) dx}\le C'D Ne^{-\beta k}\|A\|_{\mathcal{H}^s_0}\|B\|_{\mathcal{H}^s_0}
\]
as desired. In particular, from \eqref{eqn:ijnk}, $\mathbb{P}(D>D_0)<C''D^{-2}_0$, so we also obtain the polynomial tail as required.
\end{proof}

The following type of interpolation result is quite standard and works for $C^r$ as well as $\mc{H}^s$ norms; see, for example \cite{siddiqi2023decay}, \cite{bedrossian2022almost}, \cite{cotizelati2020relation}. 
\begin{lemma}\label{lem:interpolation_of_mixing}
(Interpolation of Quenched Exponential Mixing) Suppose that $\omega$ is a sequence of maps in $\text{Homeo}_{\vol}(M)$ such that $f_{\omega}$ exponentially mixes with rate $\alpha$ and constant $C_{\omega}$ on functions in $\mc{H}^{s_0}(M)$, $s_0>0$. Then for each $s>0$, $f_{\omega}$ is exponentially mixing on $\mc{H}^{s}$ with exponential rate $\alpha/(2s_0s^{-1}-1)$ and constant $C_{\omega}+3$. Further, for any $\gamma>0$ $f_{\omega}$ is exponentially mixing on functions in $C^{\gamma}$ with some constants $\beta(\alpha,\gamma)$ and $C'(C_{\omega},\alpha,\gamma)$ that we do not compute explicitly.
\end{lemma}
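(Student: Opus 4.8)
The statement to prove is Lemma~\ref{lem:interpolation_of_mixing}, the interpolation of quenched exponential mixing. The plan is to use the standard interpolation trick: split an observable into its low-frequency and high-frequency parts using the smoothing operators $\mc{T}_\lambda$ and $\mc{R}_\lambda$ from \eqref{eqn:smoothing_operator_estimates}, apply the given mixing estimate on the smooth (low-frequency) part where the $\mc{H}^{s_0}$ norm is controlled, and bound the remainder crudely in $L^2$, which is available since volume-preserving maps act isometrically on $L^2$ and $\mc{H}^0 = L^2$.

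In detail, first I would fix $A, B \in \mc{H}^s(M)$ with unit norm, WLOG, and a threshold $\lambda \ge 1$ to be optimized. Write $A = \mc{T}_\lambda A + \mc{R}_\lambda A$ and similarly for $B$, so that $\int (A\circ f_\omega)(B\circ f_\omega^{-1}\!\cdots)$ — more precisely $\int A(x) B(f_\omega x)\,dx$ — expands into four terms. The cross terms and the remainder--remainder term are each estimated by Cauchy--Schwarz and the isometry of $f_\omega$ on $L^2$: each is $\le \|\mc{R}_\lambda A\|_{L^2}\|B\|_{L^2} + \cdots \le \lambda^{-s}$ using the first inequality in \eqref{eqn:smoothing_operator_estimates} (with $\|B\|_{\mc{H}^s}=1 \Rightarrow \|B\|_{L^2}\le 1$). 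The main term $\int (\mc{T}_\lambda A)(x)(\mc{T}_\lambda B)(f_\omega x)\,dx$ is handled by the hypothesis: it is $\le C_\omega e^{-\alpha N}\|\mc{T}_\lambda A\|_{\mc{H}^{s_0}}\|\mc{T}_\lambda B\|_{\mc{H}^{s_0}} \le C_\omega e^{-\alpha N}\lambda^{2(s_0-s)}$ when $s_0 > s$ (if $s_0 \le s$ the smoothing only helps and one can just use $\|\mc{T}_\lambda A\|_{\mc{H}^{s_0}}\le \|A\|_{\mc{H}^s}=1$), using the second inequality in \eqref{eqn:smoothing_operator_estimates}. So altogether the correlation is $\lesssim \lambda^{-s} + C_\omega e^{-\alpha N}\lambda^{2(s_0-s)}$ — wait, I need to be careful about the exponent of $\lambda$ in the main term; with $s_0 > s$ I should instead write $\|\mc{T}_\lambda A\|_{\mc{H}^{s_0}} \le \lambda^{s_0 - s}\|A\|_{\mc{H}^s}$, giving $C_\omega e^{-\alpha N}\lambda^{2(s_0-s)}$. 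Then optimize: choose $\lambda = e^{\theta N}$ so that $\lambda^{-s}$ and $e^{-\alpha N}\lambda^{2(s_0-s)}$ have comparable exponents, i.e. $-\theta s = -\alpha + 2\theta(s_0-s)$, giving $\theta = \alpha/(2s_0 - s)$ and a final rate of $\theta s = \alpha s/(2s_0 - s) = \alpha/(2s_0 s^{-1} - 1)$, matching the claimed rate. Tracking the constants — the three $L^2$ error terms each contribute at most $1$ (bounded by $\lambda^{-s}\le 1$), hence the additive $+3$, while the main term contributes the $C_\omega$ factor.

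For the H\"older statement, I would reduce to the Sobolev case via the Sobolev embedding Lemma~\ref{LmSobolev}: given $\gamma > 0$, pick $s$ with $s > \gamma$... actually one needs both $C^\gamma \hookrightarrow \mc{H}^{s}$ (which requires $s \le \gamma$) and $\mc{H}^{s_0} \hookrightarrow$ something — the cleanest route is: $C^\gamma(M) \subset \mc{H}^{\gamma'}(M)$ for any $\gamma' < \gamma$ (in fact $C^\gamma \subset \mc{H}^s$ for $s \le \gamma$ with norm bound by Sobolev embedding $C^s \subset H^s$), so a $C^\gamma$ observable has controlled $\mc{H}^{\gamma'}$ norm, and then apply the already-proven $\mc{H}$-interpolation at index $\gamma'$ to get some quenched mixing rate $\beta(\alpha,\gamma)$ and constant $C'(C_\omega,\alpha,\gamma)$. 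Since the lemma explicitly says the H\"older constants need not be computed, this is all that is required.

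The main obstacle is bookkeeping rather than conceptual: one must be careful about the case distinction $s_0 > s$ versus $s_0 \le s$ (the stated rate $\alpha/(2s_0 s^{-1}-1)$ is what the optimization yields and happens to be meaningful as a positive rate only when $2s_0 > s$; when $s$ is small relative to $s_0$ the smoothing argument is actually easy and one gets an even better rate, so stating the weaker uniform bound is harmless), and one must verify that $f_\omega$ being a volume-preserving homeomorphism genuinely gives the $L^2$ isometry $\|B\circ f_\omega\|_{L^2} = \|B\|_{L^2}$ used in bounding the error terms. Getting the constant to be exactly $C_\omega + 3$ requires noting that each of the three error terms is bounded by $\lambda^{-s} \le 1$ for $\lambda \ge 1$; since we are free to take $\lambda = \max(1, e^{\theta N})$ this is fine, and for the finitely many $N$ with $e^{\theta N} < 1$ — there are none since $\theta, N > 0$ — no issue arises. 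I would also double-check that the interpolation is applied with the canonical Laplacian-eigenbasis norms so that \eqref{eqn:smoothing_operator_estimates} applies verbatim.
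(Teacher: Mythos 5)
Your Sobolev argument is essentially the paper's own proof: the same $\mc{T}_\lambda/\mc{R}_\lambda$ splitting, the same bound $3\lambda^{-s}+C_\omega e^{-\alpha N}\lambda^{2(s_0-s)}$ from \eqref{eqn:smoothing_operator_estimates}, and the same choice $\lambda=e^{\theta N}$ with $\theta=\alpha/(2s_0-s)$, giving the rate $\alpha/(2s_0s^{-1}-1)$ and constant $C_\omega+3$ (note only that the $+3$ arises because, after this choice of $\lambda$, each of the three $L^2$ error terms equals $e^{-\theta s N}$ and so decays at the final rate, rather than merely being ``at most $1$''). For the H\"older statement the paper simply defers to \cite[Lem.~2.1]{siddiqi2023decay}, whereas you reduce it to the Sobolev case via the embedding $C^\gamma\subset\mc{H}^{\gamma'}$ for $\gamma'<\gamma$ (the safe, non-borderline form) and then bound the $\mc{H}^{\gamma'}$ norms by $C^\gamma$ norms; since the lemma does not require explicit H\"older constants, this is an adequate self-contained substitute for the citation.
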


 \begin{proof}
We begin with the case of functions in $\mc{H}^s$. The case of functions in $C^{\alpha}$ is explained in \cite[Lem.~2.1]{siddiqi2023decay}, and we omit it.

Suppose that $\phi,\psi\in \mc{H}^{s}$ are two zero mean functions and $0<s<s_0$, the nontrivial case. Then we have the following estimates using the smoothing operators $\mc{T}_{\lambda},\mc{R}_{\lambda}$ and the estimates in \eqref{eqn:smoothing_operator_estimates}: 
\begin{align}
\abs{\int \phi\psi\circ f^n_{\omega}\,d\vol}&\le\abs{ \int (\mc{R}_{\lambda}\phi+\mc{T}_{\lambda}\phi)(\mc{R}_{\lambda}\psi+\mc{T}_{\lambda}\psi)\circ f^n_{\omega}\,d\vol}\\
&\le \|\mc{R}_{\lambda}\phi\|_{L^2}\|\mc{R}_{\lambda}\psi\|_{L^2}+\|\mc{R}_{\lambda}\phi\|_{L^2}\|\psi\|_{L^2}+\|\phi\|_{L^2}\|\mc{R}_{\lambda}\psi\|_{L^2}\\
&\hspace{1cm}+C_{\omega}e^{-n\alpha}\|\mc{T}_{\lambda}\phi\|_{\mc{H}^{s_0}}\|\mc{T}_{\lambda}\psi\|_{\mc{H}^{s_0}}\\
&\le (3\lambda^{-s}+e^{-n\alpha} \lambda^{2(s_0-s)}C_{\omega})\|\phi\|_{\mc{H}^s}\|\psi\|_{\mc{H}^s}.
\end{align}
Then take $\lambda=e^{\beta n}$. Optimizing in $\beta$ then yields $\beta=\alpha/(2s_0-s)$.
Thus we get exponential mixing at rate
$\displaystyle
\frac{\alpha}{2(s_0/s)-1}$ with constant $C_{\omega}+3.$
\end{proof}

 An immediate corollary of Lemma~\ref{lem:interpolation_of_mixing} is the following.
\begin{corollary}\label{cor:annealed_interpolation}
Suppose that annealed exponential mixing holds on $\mc{H}^p_0(M)$ for some $p>0$. Then annealed exponential mixing holds on $\mc{H}^s_0(M)$ for all $s>0$. Suppose that annealed exponential mixing holds on $C^r(M)$ for some $r>0$. Then annealed exponential mixing holds on $C^{\alpha}$ for all $\alpha>0$.
\end{corollary}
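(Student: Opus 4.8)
The plan is to reuse the interpolation argument of Lemma \ref{lem:interpolation_of_mixing}, observing that it depends on the correlation integral only through two properties, both of which survive averaging over $\omega$. Fix the exponent $p>0$ for which annealed mixing is assumed, and consider the bilinear form $Q_N(A,B)=\EXP_{\bmu}\!\left(\int A\,(B\circ F^N_\omega)\,dx\right)$ on $\mc{H}^p_0(M)$. It enjoys, first, the unconditional bound $|Q_N(A,B)|\le\|A\|_{L^2}\|B\|_{L^2}$ for every $N$: this holds for each fixed $\omega$ by Cauchy--Schwarz and volume preservation of $f_\omega$, hence after taking $\EXP_{\bmu}$; and, second, $|Q_N(A,B)|\le C e^{-\alpha N}\|A\|_{\mc{H}^p_0}\|B\|_{\mc{H}^p_0}$, which is exactly the annealed mixing hypothesis (Definition \ref{defn:annealed_exponential_mixing_skew}). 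No other feature of $Q_N$ is used below.

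For the Sobolev claim, fix $s>0$. If $s>p$ the conclusion is immediate from the continuous embedding $\mc{H}^s_0\hookrightarrow\mc{H}^p_0$ (valid because the Laplacian eigenvalues on zero-mean functions are bounded below), which transfers the mixing estimate with the same rate. If $0<s<p$, I would run exactly the computation in the proof of Lemma \ref{lem:interpolation_of_mixing}: for $\lambda>0$ write $A=\mc{T}_\lambda A+\mc{R}_\lambda A$ and $B=\mc{T}_\lambda B+\mc{R}_\lambda B$, expand $Q_N(A,B)$ into four terms by bilinearity, bound the three terms involving a factor $\mc{R}_\lambda$ using the $L^2$--$L^2$ estimate together with $\|\mc{R}_\lambda A\|_{L^2}\le\lambda^{-s}\|A\|_{\mc{H}^s}$ from \eqref{eqn:smoothing_operator_estimates}, and bound the remaining term $Q_N(\mc{T}_\lambda A,\mc{T}_\lambda B)$ by the annealed hypothesis together with $\|\mc{T}_\lambda A\|_{\mc{H}^p}\le\lambda^{p-s}\|A\|_{\mc{H}^s}$. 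This yields $|Q_N(A,B)|\le\bigl(3\lambda^{-s}+C e^{-\alpha N}\lambda^{2(p-s)}\bigr)\|A\|_{\mc{H}^s}\|B\|_{\mc{H}^s}$; taking $\lambda=e^{\beta N}$ and optimizing over $\beta$ exactly as in Lemma \ref{lem:interpolation_of_mixing} gives annealed exponential mixing on $\mc{H}^s_0$ with rate $\alpha/(2(p/s)-1)$ and constant $C+3$.

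For the H\"older claim, I would reduce to the Sobolev case just proved. By the Sobolev embedding (Lemma \ref{LmSobolev}), $\mc{H}^{r+d/2}_0\hookrightarrow C^r$ continuously, so annealed mixing on $C^r$ forces annealed mixing on $\mc{H}^{r+d/2}_0$; the Sobolev claim propagates this to $\mc{H}^\alpha_0$ for every $\alpha>0$; and since $C^\alpha\hookrightarrow\mc{H}^\alpha_0$ continuously on the zero-mean subspace, it transfers back to $C^\alpha$ for every $\alpha>0$. (Alternatively one can run the truncation argument directly with the $C^\alpha$ half of Lemma \ref{lem:interpolation_of_mixing}, as in \cite[Lem.~2.1]{siddiqi2023decay}.) I do not anticipate any genuine obstacle here: the only point needing a moment's attention is the bookkeeping that the single $\omega$-dependent ingredient---the mixing bound---is invoked only after taking $\EXP_{\bmu}$, while the deterministic $L^2$ bound is invoked beforehand, which is automatic since every inequality in the chain other than the mixing estimate holds pointwise in $\omega$. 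The only minor nuisance is the borderline integer indices in Lemma \ref{LmSobolev}, circumvented by enlarging the Sobolev exponent by an arbitrarily small amount.
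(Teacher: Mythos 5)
Your proposal is correct and is essentially the paper's own argument: the paper treats the corollary as immediate from Lemma~\ref{lem:interpolation_of_mixing}, since the interpolation/truncation computation there uses only the trivial $L^2$ bound (which holds pointwise in $\omega$, hence after averaging) and the mixing estimate, exactly the two properties of $Q_N$ you isolate. Your H\"older case detours through Sobolev embeddings where the paper would simply invoke the $C^\alpha$ half of Lemma~\ref{lem:interpolation_of_mixing} directly, but you note that alternative yourself, so the routes coincide in substance.
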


\begin{remark}\label{rem:holder_sobolev_equivalence}
The arguments in this section show that annealed exponential mixing on $\mc{H}^s(M)$ or $C^{\alpha}(M)$ for any particular positive $s$ or $\alpha$, imply annealed exponential mixing for all $\alpha,s>0$. That $\mc{H}^{s_0}$ implies exponential mixing for all $\mc{H}^s$, $s>0$, is the content of Lemma \ref{lem:interpolation_of_mixing}. That lemma shows that annealed mixing on $\mc{H}^{s_0}$ implies annealed exponential mixing on $C^{\alpha}$. But for sufficiently large $s_1$, $\mc{H}^{s_1}$ embeds compactly in $C^{\alpha}$, hence as this satisfies annealed exponential mixing, we also have annealed exponential mixing on $\mc{H}^{s_1}$. Thus annealed exponential mixing on $\mc{H}^s$ and $C^{\alpha}$, $s,\alpha>0$, are the same thing.
\end{remark}

Our mixing estimate implies an almost sure bound on ergodic sums.
Note that if $A$ is fixed first then Corollary \ref{cor:pointwise_estimate} can be obtained from annealed mixing and the Fubini Theorem. The novelty of this
result
is that the set of $\omega$s of full measure could be taken independent of $A$.

\begin{corollary}\label{cor:pointwise_estimate}
Under the assumptions of Theorem \ref{ThQMix}, for almost every $\omega$ we have that for each 
$A\in \cH^s_0$  with $s>0$ or $A\in C^{\alpha}(M)$, $\alpha>0$, and for almost all $x$:
\begin{equation}\label{eqn:as_birkhoff_est}
 \sum_{n=1}^N A(F^n_{\omega} x)=O\left(N^{1/2+\eps}\right).
 \end{equation}
\end{corollary}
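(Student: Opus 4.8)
The plan is to deduce the estimate from the Gaal--Koksma Strong Law (Theorem~\ref{thm:gaal-koksma}), applied on the probability space $(M,\vol)$. First I would fix a word $\omega$ in the full-measure set on which the conclusion \eqref{EqQEM} of Theorem~\ref{ThQMix} holds; using Theorem~\ref{ThQMix} together with Lemma~\ref{lem:interpolation_of_mixing} and intersecting over the countably many exponents needed (e.g.\ $s\in\{1/k:k\in\N\}$), this set can be chosen once and for all, independently of $A$, so that for this $\omega$ quenched exponential mixing of the form \eqref{EqQEM} holds for every zero-mean $A$ lying in some $\cH^s_0(M)$, $s>0$, or $C^\alpha(M)$, $\alpha>0$, with a prefactor $C(\omega)$ depending only on $\omega$. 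Then, for a fixed such $A$, I would regard $X_n:=A\circ F^n_\omega$ as random variables on $(M,\vol)$; since $F^n_\omega$ preserves volume and $A$ has zero mean, $\int_M X_n\,dx=0$, so the $X_n$ are centered. It will then be enough to verify the hypothesis \eqref{eqn:gaal-koksma-hypothesis} with exponent $\sigma=1+\eps'$ for an arbitrarily small $\eps'>0$: Theorem~\ref{thm:gaal-koksma} will give, for each $\delta>0$ and a.e.\ $x$, $\sum_{n=1}^N A(F^n_\omega x)=O\big(N^{(1+\eps')/2}\ln^{\delta+2}N\big)=O(N^{1/2+\eps})$, and intersecting the resulting full-measure sets of $x$ over $\eps'\in\{1/k:k\in\N\}$ will yield the bound for every $\eps>0$ simultaneously.

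The heart of the argument is the second-moment estimate
\[
\int_M\Big(\sum_{m<j\le m+n}A(F^j_\omega x)\Big)^2\,dx
=\sum_{m<j,\,j'\le m+n}\int_M A(F^j_\omega x)\,A(F^{j'}_\omega x)\,dx
\le C(\omega,A)\,n\ln(m+n+2)
\]
for all integers $m\ge0$, $n\ge1$. To prove this I would pick a cutoff $L=L(\omega,A,m,n)$ equal to a suitable multiple of $\ln(m+n+2)$ and split the double sum according to whether $|j-j'|\le L$ or $|j-j'|>L$. For the near-diagonal terms one uses only the trivial bound $\big|\int_M A(F^j_\omega x)A(F^{j'}_\omega x)\,dx\big|\le\|A\|_{L^2}^2$, which follows from Cauchy--Schwarz and volume preservation; there are at most $(2L+1)n$ such terms, so this part contributes $O(nL)\|A\|_{L^2}^2=O\big(n\ln(m+n+2)\big)$. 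For the off-diagonal terms, assuming $j<j'$, one applies quenched exponential mixing \eqref{EqQEM} with exponents $N=j$ and $k=j'-j$ (and its H\"older analogue from Lemma~\ref{lem:interpolation_of_mixing} in the $C^\alpha$ case), getting $\big|\int_M A(F^j_\omega x)A(F^{j'}_\omega x)\,dx\big|\le C(\omega)\,j\,\|A\|^2 e^{-\beta(j'-j)}\le C(\omega)(m+n)\|A\|^2 e^{-\beta(j'-j)}$; summing the geometric series over $j'$ and then over the $n$ admissible values of $j$ bounds this part by $2C(\omega)\,n\,(m+n)\,\|A\|^2 e^{-\beta L}(1-e^{-\beta})^{-1}$, which is $\le n$ once $L\ge \beta^{-1}\ln\!\big(2C(\omega)(m+n)\|A\|^2(1-e^{-\beta})^{-1}\big)$. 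Adding the two pieces gives the displayed bound.

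To finish I would check the elementary inequality $n\ln(m+n+2)\le C_{\eps'}\big((m+n)^{1+\eps'}-m^{1+\eps'}\big)$ for all integers $m\ge0$, $n\ge1$; this holds because $(m+n)^{1+\eps'}-m^{1+\eps'}=(1+\eps')\int_m^{m+n}t^{\eps'}\,dt$ is increasing in $m$ and is $\ge c_{\eps'}\,n\,(m+n)^{\eps'}\ge c'_{\eps'}\,n\ln(m+n+2)$. Hence \eqref{eqn:gaal-koksma-hypothesis} holds with $\sigma=1+\eps'$ and the corollary follows as described above. The one delicate point—and the reason a naive application of exponential mixing fails—is the growing prefactor $N$ in the quenched estimate \eqref{EqQEM}: because of it, exponential decay in $k$ gives no useful control of the pairs $(j,j')$ with $|j-j'|$ small, so those terms must be estimated by the crude $L^2$ bound, and the resulting logarithmic loss is absorbed by allowing the Gaal--Koksma exponent $\sigma$ to exceed $1$.
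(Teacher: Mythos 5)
Your proposal is correct and follows essentially the same route as the paper: fix $\omega$ in the full-measure set where the quenched mixing estimate \eqref{EqQEM} holds for all observables, verify the Gaal--Koksma hypothesis \eqref{eqn:gaal-koksma-hypothesis} with $\sigma=1+\eps'$ by bounding the correlations by $\min\{\|A\|_{L^2}^2,\,C(\omega)(m+n)\|A\|^2e^{-\beta|j-j'|}\}$ (a near-diagonal/off-diagonal split at $L\sim\ln(m+n)$), and conclude via Theorem~\ref{thm:gaal-koksma}. Your write-up simply makes explicit the steps the paper leaves implicit (the logarithmic count of near-diagonal terms, the comparison $n\ln(m+n+2)\le C_{\eps'}((m+n)^{1+\eps'}-m^{1+\eps'})$, and the countable intersections over $s$ and $\eps'$), all of which are correct.
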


\begin{proof}
This follows from quenched exponential mixing \eqref{EqQEM} and Theorem \ref{thm:gaal-koksma}. Let $X_j=A(F^j_{\omega}(x))$. Then we must check that \eqref{eqn:gaal-koksma-hypothesis} is satisfied. From \eqref{EqQEM}, it follows that:
\begin{align*}
\abs{\left(\sum_{j=M+1}^{N+M} A(F^j_{\omega}x) \right)^2}&\le \sum_{0<i,j\le N} \abs{\int A(F^{M+i}_{\omega}x)A(F^{M+j}_{\omega}x)\,dx}\\
&\le \sum_{0<i-j\le N}\min\{1,C(\omega)e^{-\beta \abs{i-j}}(M+\min\{i,j\})\}. 
\end{align*}
From this it follows that \eqref{EqQEM} holds with $\sigma=1+\epsilon'$ for any $\epsilon'>0$. Hence by Theorem \ref{thm:gaal-koksma}, we obtain \eqref{eqn:as_birkhoff_est}.
\end{proof}

\section{Asymptotic Quenched Variance from exponential mixing} 
\label{sec:var}

\subsection{Main result.}
To show the quenched Central Limit Theorem from the annealed Central Limit Theorem 
we need to control the growth of quenched variances. 
In this section, we study the quenched variance both identifying the growth rate of the sample variance as well as showing that the quenched variance is almost surely independent of the sequence. The rest of this section is structured as follows. First, we will give some definitions separating the terms appearing in the variance into those where we do not use mixing and those where we do. Then we discuss some properties of the function spaces that are used in the arguments that do not use mixing (Def.~\ref{defn:SFOP}) and obtain concentration estimates (Lemma \ref{claim:low_order_terms}). After that we prove estimates for the terms where we do use mixing (Lemma \ref{claim:control_of_high_order_variance}). These combine to give a quenched estimate on the convergence of the sample variance (Proposition~\ref{prop:quenched_variance}).

The the main result of the section is the uniform control of the quenched variance. It plays a key role in the
proof of quenched CLT (Theorem \ref{ThAnQCLT}) given in Section \ref{sec:central_limit_theorem}.

\begin{proposition}\label{prop:quenched_variance}
Suppose that $\mu$ is a compactly supported measure on $\Diff^1_{\vol}(M)$ that satisfies quenched exponential mixing with polynomial tails on functions in $C^{\alpha}(M)$ or $H^s(M)$, $s,\alpha>0$, or more generally a function space $\mc{B}$ 
 of controlled $L^2$ complexity in the sense of 
Definition~\ref{defn:SFOP}. Then quenched variance converges sublinearly. Namely, for $\mu^{\N}$-a.e.~$\omega$,
\begin{equation}
\abs{V_N(A)-\cD^2(A)}=o(N)\|A\|^2_{\mc{B}}. 
\end{equation}
\end{proposition}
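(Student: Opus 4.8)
The plan is to expand the quenched variance over pairs of times. Since each $F^n_\omega$ preserves volume,
\[
V_N(A)=\sum_{n,m=0}^{N-1}\int A(F^n_\omega x)A(F^m_\omega x)\,dx=N\int_M A^2\,dx\;+\;2\sum_{k=1}^{N-1}\ \sum_{n=0}^{N-1-k}\Phi^A_k(\sigma^n\omega),
\]
where, after the measure-preserving substitution $y=F^n_\omega x$ in the $(n,n+k)$ term, $\Phi^A_k(\omega)=\int A(y)\,A\pez{f_{\sigma^{k-1}\omega}\circ\dots\circ f_\omega\,(y)}\,dy$ depends only on $\omega_0,\dots,\omega_{k-1}$, is bounded with $\abs{\Phi^A_k}\le\|A\|_{L^2}^2\le C\|A\|_{\mc B}^2$, and has mean $c_k(A)=\EXP\pez{\int A(x)A(F^k_\omega x)\,dx}$. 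The diagonal term is \emph{exact}, with no error. Quenched exponential mixing with polynomial tails implies annealed exponential mixing (the first Proposition of Section~\ref{sec:annealed_from_quenched}), so $\abs{c_k(A)}\le Ce^{-\alpha k}\|A\|_{\mc B}^2$; hence $\cD(A)=\int A^2+2\sum_{k\ge1}c_k(A)$ converges with $\abs{\cD(A)}=O(\|A\|_{\mc B}^2)$, and, modulo the series tails $2N\sum_{k\ge N}c_k(A)$ and $2\sum_{k<N}k\,c_k(A)$ (both $o(N)\|A\|_{\mc B}^2$), the proposition reduces to showing that, almost surely and with an $o(N)$ not depending on $A$,
\[
\sum_{k=1}^{N-1}\ \sum_{n=0}^{N-1-k}\pez{\Phi^A_k(\sigma^n\omega)-c_k(A)}=o(N)\|A\|_{\mc B}^2 .
\]
I would split this double sum at a cutoff $L_N=\lceil c\ln N\rceil$, with $c$ large in terms of the mixing rates; the piece $k>L_N$ is the content of Lemma~\ref{claim:control_of_high_order_variance}, the piece $k\le L_N$ of Lemma~\ref{claim:low_order_terms}.

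For the high-order piece ($k>L_N$) no structure of $\mc B$ is needed: quenched mixing \eqref{EqQEM} gives $\abs{\Phi^A_k(\sigma^n\omega)}\le C(\omega)\,n\,e^{-\beta k}\|A\|_{\mc B}^2$, whence
\[
\sum_{k>L_N}\sum_{n<N}\abs{\Phi^A_k(\sigma^n\omega)}\;\le\;C(\omega)\|A\|_{\mc B}^2\Big(\sum_{n<N}n\Big)\Big(\sum_{k>L_N}e^{-\beta k}\Big)=O\big(C(\omega)\,N^2e^{-\beta L_N}\big)\|A\|_{\mc B}^2,
\]
which for $c$ large is $O(C(\omega))\|A\|_{\mc B}^2=o(N)\|A\|_{\mc B}^2$ since $C(\omega)<\infty$ a.s.; and $\sum_{k>L_N}N\abs{c_k(A)}=O(Ne^{-\alpha L_N})\|A\|_{\mc B}^2=o(N)\|A\|_{\mc B}^2$ as well. (Taking $L_N$ to be a small power of $N$ makes this part trivially negligible.)

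The substance is the low-order piece $1\le k\le L_N$, where mixing is unavailable and one must use the controlled $L^2$ complexity of $\mc B$ (Definition~\ref{defn:SFOP}); for $\mc B=\mc H^s$ or $C^\alpha$ this is the Weyl law for the Laplace eigenbasis $\{\varphi_i\}$, and by Lemma~\ref{lem:interpolation_of_mixing} and Remark~\ref{rem:holder_sobolev_equivalence} it suffices to work with $\mc H^s$, $s$ large. Writing $A=\sum_i a_i\varphi_i$ gives $\Phi^A_k(\sigma^n\omega)=\sum_{i,\ell}a_i a_\ell\,\rho_{i,\ell,n,k}$ with $\rho_{i,\ell,n,k}=\int\varphi_i(F^n_\omega x)\varphi_\ell(F^{n+k}_\omega x)\,dx$ as in \eqref{eqn:defn_rho}, so it is enough to control the centered sums $\Sigma^N_{i,\ell,k}=\sum_{n=0}^{N-1}\pez{\rho_{i,\ell,n,k}-\EXP\rho_{i,\ell,n,k}}$ for each basis pair and recombine. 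For fixed $(i,\ell,k)$ the summands are centered, bounded by $2$, and $k$-dependent in $n$, while by stationarity of the IID system $\EXP[\rho_{i,\ell,n,k}^2]=\EXP[\rho_{i,\ell,0,k}^2]$ is independent of $n$ and, by applying annealed two-point mixing to $\psi(x,y)=\varphi_i(x)\varphi_i(y)$ exactly as in the proof of Theorem~\ref{ThQMix}, equals $O(\lambda_i^{q}\lambda_\ell^{q}e^{-\alpha k})$ for a fixed power $q$ (with only quenched mixing and polynomial tails one gets the same with $\alpha$ replaced by $2\beta$ up to a logarithm, still sufficient). Hence $\EXP[(\sum_{M<n\le M+R}(\rho_{i,\ell,n,k}-\EXP\rho_{i,\ell,n,k}))^2]\le C\,k\,\lambda_i^q\lambda_\ell^q e^{-\alpha k}\,R$, i.e.\ the hypothesis of the Gaal--Koksma strong law (Theorem~\ref{thm:gaal-koksma}) holds with $\sigma=1$, so $\abs{\Sigma^N_{i,\ell,k}}=O(N^{1/2}\ln^{2+\delta}N)$ a.s.

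To finish I would pass from per-pair control to uniform control. A quantitative form of the Gaal--Koksma estimate (or Chebyshev at geometric times together with a maximal inequality for the partial sums) bounds the implied constant above by a multiple of $(k\lambda_i^q\lambda_\ell^q e^{-\alpha k})^{1/2}$; summing the resulting probability estimates over the \emph{countable} index set $\{(i,\ell,k)\}$ — which converges since $\sum_i\lambda_i^{-t}<\infty$ for $t>d/2$ (Lemma~\ref{lem:weyl_law_sum}) and $\sum_k k e^{-\alpha k}<\infty$ — yields, by Borel--Cantelli, one full-measure event and one finite random variable $K(\omega)$ with $\Prob(K(\omega)>D)=O(D^{-2})$ (the tail in the statement) such that $\abs{\Sigma^N_{i,\ell,k}}\le K(\omega)\,\lambda_i^{m}\lambda_\ell^{m}e^{-\alpha k/2}\,N^{1/2}\ln^{2+\delta}N$ for all $i,\ell,k,N$, with $m$ a fixed power. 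Recombining by Cauchy--Schwarz, $\sum_{i,\ell}\abs{a_i}\abs{a_\ell}\lambda_i^{m}\lambda_\ell^{m}\le\|A\|_{\mc H^s}^2\sum_i\lambda_i^{2m-2s}<\infty$ for $s$ large (Weyl), and $\sum_{k\le L_N}e^{-\alpha k/2}=O(1)$, so the low-order piece is $O(K(\omega)\,N^{1/2}\ln^{2+\delta}N)\|A\|_{\mc H^s}^2=o(N)\|A\|_{\mc H^s}^2$ (the mismatch between the $N-k$ and $N$ terms in the inner sum adds only $O((\ln N)^2)\|A\|_{\mc B}^2$). Adding the diagonal, the low- and high-order pieces and the series tails for $\cD(A)$ gives $V_N(A)=N\cD(A)+o(N)\|A\|_{\mc B}^2$ uniformly over the unit ball. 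The step I expect to be the main obstacle is exactly this uniform control in the low-order range: one needs the tail bounds on the $\Sigma^N_{i,\ell,k}$ to decay fast enough in $k$ and grow slowly enough in $\lambda_i,\lambda_\ell$ to be summed against an arbitrary Sobolev (or H\"older) observable, and to do so while the cutoff $L_N\to\infty$ forces infinitely many values of $k$ to be handled simultaneously — this uniform-in-the-observable bookkeeping is precisely the additional work beyond the fixed-observable arguments of \cite{DKK04}, whereas the per-pair input (stationarity plus the two-point mixing second moment, then Gaal--Koksma) is routine.
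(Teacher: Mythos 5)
Your architecture is genuinely different from the paper's, and parts of it are sound. The splitting of $V_N(A)$ into the exact diagonal, a low-lag piece $k\le L_N=c\ln N$ and a high-lag piece, the stationarity identity $\E[\rho_{i,\ell,n,k}^2]=\E[\rho_{i,\ell,0,k}^2]$, and the reduction of the $N\cD(A)$ tails are all fine. Your treatment of the high-lag piece (crudely summing the quenched bound $C(\omega)\,n\,e^{-\beta k}$ over $n<N$, $k>c\ln N$) is correct and in fact simpler than Lemma \ref{claim:control_of_high_order_variance}, whose extra work is only needed for the sharper $N^{-\eta}$ bound. Your parenthetical fallback for the second moment of $\rho_{i,\ell,0,k}$ (quenched one-point mixing plus the polynomial tail, in place of the two-point hypothesis, which is indeed \emph{not} assumed in this proposition) also works. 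Where you diverge from the paper is the low-lag regime: the paper uses no mixing there at all — it applies Azuma's inequality (Proposition \ref{prop:azuma}) to each function of a finite $L^2$-net $\mc F_N$ of the unit ball of $\mc B$, takes a union bound via \eqref{eqn:SFOP_summability}, and transfers to arbitrary $A$ by the $L^2$-Lipschitz dependence of $D_{N,\ell,i}$ on the observable (the $o(1/\ln N)$ denseness \eqref{eqn:L2_dense2}); you instead expand in the Laplace eigenbasis and try to control each centered sum $\Sigma^N_{i,\ell,k}$.

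Two genuine gaps remain in your route. First, the uniform quantitative strong law over all $(i,\ell,k,N)$ simultaneously is asserted, not proved: Theorem \ref{thm:gaal-koksma} is purely qualitative (a random implied constant with no tail control), the $\rho_{n,k}$ for varying $n$ are only block-independent (no martingale structure, so Doob is unavailable and one needs a Rademacher--Menshov/Serfling-type maximal inequality over dyadic blocks), and the union bound over the infinite index set only converges if the thresholds carry index-dependent weights; as written your bookkeeping (threshold $e^{-\alpha k/2}$ against a variance proxy of size $k\,e^{-\alpha k}$ per time step) leaves no decay in $k$ to sum. This is fixable, but it is precisely the step you flag as the obstacle, and it is the technical heart that the paper's Azuma-plus-net argument avoids. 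Second, and more seriously, the reduction ``by Lemma \ref{lem:interpolation_of_mixing} and Remark \ref{rem:holder_sobolev_equivalence} it suffices to work with $\mc H^s$, $s$ large'' is not valid: those statements transfer \emph{mixing hypotheses} between spaces, not the conclusion of the proposition, which is a bound uniform over the unit ball of $\mc B$ in terms of $\|A\|_{\mc B}^2$. The unit ball of $C^\alpha$, or of $\mc H^{s_0}$ with $s_0$ small, is not a bounded subset of $\mc H^s$ for large $s$, and your recombination needs roughly $s>d/2$ (summability of $\lambda_i^{2m-2s}$ with $m>d/4$), so low-regularity observables — and the general spaces of controlled $L^2$ complexity in Definition \ref{defn:SFOP} — are not reached by the argument as written. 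Repairing this requires an additional quantitative $L^2$-approximation or frequency-truncation step (e.g.\ cutting at frequency of polylogarithmic size in $N$ and using that the low-lag piece has only $O(N\ln N)$ terms, each $L^2$-Lipschitz in the observable), which is essentially the controlled-$L^2$-complexity mechanism the paper's proof of Lemma \ref{claim:low_order_terms} is built on.
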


\subsection{Good Banach spaces.}
In order to study the convergence of the variance there are two regimes of behavior. To control the variance in the regime where there is not enough time for mixing, we will rely on Azuma's inequality as well as properties of function spaces
we work with. 

We will consider a Banach space $\mc{B}\subseteq L^2(\vol)$ satisfying a list of properties. If you do not like looking at the list, just pretend that $\mc{B}$ is equal to the space of H\"older functions and nothing will be lost. The main statement is that from the point of view of $L^2$, one can find an $\epsilon$-dense set in the unit sphere of $\mc{B}$ at a particular quantitative rate. Here is the list of properties that we require.

\begin{definition}\label{defn:SFOP}
A Banach space $\mc{B}\subseteq L^2(\vol)$ has \emph{controlled $L^2$ complexity} if
\begin{enumerate}
    \item (Controlled by $L^2$) There exists $C_1$ such that $\|f\|_{\mc{B}}\le 1$ implies $\|A\|_{L^2}<C_1$. 
    \item\label{item:L2_denseness}
    (Denseness with respect to $L^2$ ) We say that a subset $\mc{F}$ of $\mc{B}$ is $\epsilon$-dense with respect to $L^2$ 
    if for any $A$ with $\|A\|_{\mc{B}}=1$ there exists $\hat{A}\in \mc{F}$ such that
    \begin{equation}\label{eqn:L2_dense2}
    \|A-\hat{A}\|_{L^2}\le \epsilon(n). 
    \end{equation}
    
     We then require that there exists a sequence $\mc{F}_N$ of finite subsets of $\mc{B}$ of $\epsilon(N)= o(1/\ln(N))$ dense with respect to $L^2$ such that for some $\delta>0$,
    \begin{equation}\label{eqn:SFOP_summability}
\sum_{N\in \N} \abs{\mc{F}_N}e^{-N^{\delta}/C}<\infty.
    \end{equation}
\end{enumerate}
\end{definition}

\noindent Note that $L^2$ itself does not satisfy these properties: unlike the other spaces the inclusion of $L^2$ into itself is not compact.

\begin{remark}\label{rem:bounded_L2_norm}
Note that in Definition \ref{defn:SFOP} we may assume that for all $N\in \N$ and $f\in \mc{F}_N$ we have $\|f\|_{L^2}\le D$
with $D=2C_1$, where $C_1$ is the constant in Condition (1) of the definition. Indeed, we can
just drop all the functions which do not
satisfy this property and Condition (2) would still hold.
\end{remark}

\begin{lemma}
Let $M$ be a closed, smooth Riemannian manifold. Then for  $\alpha,s>0$, the space of $C^{\alpha}$ or $\mc{H}^s$ functions satisfy Definition \ref{defn:SFOP}.
\end{lemma}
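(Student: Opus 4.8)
The plan is to verify the two conditions of Definition~\ref{defn:SFOP} for $\mc{B}=C^\alpha(M)$ and $\mc{B}=\mc{H}^s(M)$ with $\alpha,s>0$. Condition (1), control by $L^2$, is immediate: on a closed manifold $C^\alpha\hookrightarrow C^0\hookrightarrow L^2$ continuously, and $\mc{H}^s\hookrightarrow L^2$ for $s\ge 0$ by definition of the Sobolev norm~\eqref{eqn:H_s_defn}; in both cases a unit ball has $L^2$-norm bounded by some $C_1$. The substance is Condition (2): producing finite subsets $\mc{F}_N\subset\mc{B}$ that are $\epsilon(N)$-dense in the $L^2$ metric with $\epsilon(N)=o(1/\ln N)$ and satisfying the summability~\eqref{eqn:SFOP_summability}. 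The natural construction is to project onto low Laplace modes and then take a finite net.

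\textbf{Construction of the nets.} First I would handle $\mc{H}^s$. Given $A$ with $\|A\|_{\mc{H}^s}=1$, the smoothing estimate~\eqref{eqn:smoothing_operator_estimates} gives $\|\mc{R}_\lambda A\|_{L^2}\le\lambda^{-s}$, so the finite-dimensional space $V_\lambda=\operatorname{span}\{\varphi_i:\lambda_i\le\lambda\}$ (of dimension $O(\lambda^{d})$ by the Weyl law, Lemma~\ref{lem:weyl_law_sum}, since $\lambda_i\le\lambda$ corresponds to Laplacian eigenvalue $\le\lambda^2$) contains $\mc{T}_\lambda A$, which is within $\lambda^{-s}$ of $A$ in $L^2$ and has $L^2$-norm $\le 1$. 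On the ball of radius $1$ in the finite-dimensional space $V_\lambda$ (with the $L^2$ metric) there is a $\theta$-net of cardinality at most $(3/\theta)^{\dim V_\lambda}=\exp(O(\lambda^d\ln(1/\theta)))$. Taking $\lambda=\lambda(N)\to\infty$ slowly, $\theta=\theta(N)\to 0$ slowly, and setting $\epsilon(N)=\lambda(N)^{-s}+\theta(N)$, the set $\mc{F}_N$ is $\epsilon(N)$-dense. For $C^\alpha$ the same scheme works: by Lemma~\ref{lem:dense_C_alpha} (or directly, bounding $\|\mc{R}_\lambda A\|_{L^2}$ via $\|\mc{R}_\lambda A\|_{C^0}$ and mollification estimates) the low-mode truncation of a unit-$C^\alpha$ ball is $L^2$-approximated to within $\lambda^{-\alpha'}$ for some $\alpha'>0$ depending on $\alpha,d$, and again lives in an $O(\lambda^d)$-dimensional space with $L^2$-bounded elements, so the same $\theta$-net applies.

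\textbf{Choosing the rates.} The remaining task is a bookkeeping optimization: we need $\epsilon(N)=o(1/\ln N)$ while $\abs{\mc{F}_N}=\exp(O(\lambda(N)^d\ln(1/\theta(N))))$ is small enough that $\sum_N\abs{\mc{F}_N}e^{-N^\delta/C}<\infty$. It suffices to pick, say, $\lambda(N)=(\ln N)^{1/(2d)}$ and $\theta(N)=1/(\ln N)^2$, so that $\epsilon(N)=O((\ln N)^{-1/2})=o(1/\ln N)$ wait—one must ensure the exponent: with $\lambda(N)^{-s}=(\ln N)^{-s/(2d)}$ we want this to be $o(1/\ln N)$, which forces $s/(2d)>1$; since this need only hold for \emph{some} choice, and $\mc{H}^s$-denseness for large $s$ implies it for all smaller $s$ (a unit $\mc{H}^{s}$ ball sits inside a bounded $\mc{H}^{s'}$ ball for $s'<s$ after rescaling—actually one argues directly), I would instead just take $\lambda(N)$ growing like a small power of $N$, e.g.\ $\lambda(N)=N^{\delta/(3d)}$ with $\delta$ the exponent in~\eqref{eqn:SFOP_summability}, giving $\epsilon(N)\le N^{-s\delta/(3d)}+\theta(N)$, which is certainly $o(1/\ln N)$, while $\abs{\mc{F}_N}\le\exp(C\lambda(N)^d\ln N)=\exp(CN^{\delta/3}\ln N)=e^{o(N^\delta)}$, so~\eqref{eqn:SFOP_summability} holds with room to spare.

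\textbf{Main obstacle.} The only genuinely delicate point is the $C^\alpha$ case of the $L^2$-approximation by low modes: unlike for $\mc{H}^s$, the truncation error $\|\mc{R}_\lambda A\|_{L^2}$ for $\|A\|_{C^\alpha}=1$ is not literally read off from the Sobolev norm, and one needs either the mollification/approximation estimates of \cite{hormander1976boundary} invoked in Lemma~\ref{lem:dense_C_alpha} or a direct argument that $C^\alpha\subset\mc{H}^{\alpha'}$ for some $\alpha'>0$ on a closed manifold (so that the $\mc{H}^{\alpha'}$ truncation estimate transfers). Everything else—the Weyl-law dimension count, the volumetric net bound, and the rate optimization—is routine.
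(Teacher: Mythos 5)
Your argument is correct, and for the Sobolev case it is essentially the paper's own proof: truncate to the span of Laplace modes of frequency at most $\lambda$, bound the truncation error via \eqref{eqn:smoothing_operator_estimates}, count the dimension with the Weyl law (Lemma \ref{lem:weyl_law_sum}), and take a volumetric $\theta$-net in that finite-dimensional ball; your polynomial choice $\lambda(N)=N^{\delta/(3d)}$ is even a little more generous than the paper's $\epsilon(N)=(\ln N)^{-2}$, and the summability \eqref{eqn:SFOP_summability} holds with room to spare either way. Where you genuinely diverge is the H\"older case: the paper does not pass through the spectral projection at all, but instead builds the net in physical space, covering $M$ by boxes of diameter of order $\epsilon^{1/\alpha}$, discretizing the possible values of a unit $C^\alpha$ function on adjacent boxes in increments of $\epsilon/2$, counting the resulting $\mathsf{K}^{O(\epsilon^{-d/\alpha})}$ candidates, and discarding the unrealizable ones — a purely combinatorial covering-number bound that needs no Fourier-analytic input. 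Your route instead reduces $C^\alpha$ to the Sobolev mechanism via a low-mode truncation bound $\|\mc{R}_\lambda A\|_{L^2}\lesssim\lambda^{-\alpha'}\|A\|_{C^\alpha}$, which you correctly flag as the one unproved step; it does hold (e.g. from the Besov-type embedding $C^\alpha\subset\mc{H}^{\alpha'}$ for any $0<\alpha'<\alpha$, or from the heat-semigroup estimate, and in fact the paper's Lemma \ref{LmSobolev} as stated would give it directly), so the gap is only a missing citation-level fact, not a flaw. The trade-off: your spectral treatment unifies the two cases and shortens the H\"older argument, while the paper's box construction is elementary, self-contained, and gives explicit constants without invoking any embedding of $C^\alpha$ into a Sobolev scale.
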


\begin{proof}
There are two cases $C^\alpha(M)$ and $H^s(M)$, which we handle separately. Let $\epsilon(n)$ be the smallest $\epsilon$-denseness in $\mc{B}$ with respect to $L^2$ that we can achieve using a set $\mc{F}$ of $n$ functions.
In each case it suffices to calculate, given an $\epsilon$, how many functions suffice to achieve $\epsilon$-denseness with respect to $L^2$. This number is certainly finite as the inclusion of both of these spaces into $L^2$ is compact. As long as the number of functions does not grow too fast as $\epsilon\to 0$, we may conclude. 

\noindent\textbf{Case 1:} $C^{\alpha}(M)$, $\alpha>0$. We would like to be $\epsilon$-dense with respect to $L^2$. Cover the manifold with boxes of diameter of order $(\epsilon/2)^{1/\alpha}$. Note that in each of these boxes the value of any 
$\alpha$--H\"older function with norm $1$ cannot change by more than $\epsilon/2$. 
 Now number the boxes $B_0, \dots , B_\ell$ so that for $j>0$, $B_j$ is adjacent to one of the previous boxes.
Note that $\ell$ is order $\vol(M)/(\epsilon^{d/\alpha})$. Note that the value of any $\alpha$--H\"older norm $1$ function can change by at most $\epsilon/2$ on a box. Let $\Omega_{\epsilon}$ denote the set of centers of these boxes. 
Choose an initial value for the function on $B_0$ in increments of $\epsilon/2$ from $-1$ to $1$. Then we extend the domain of the candidate to the next box by modifying the value of the candidate function on the previous box by an increment of either 
$0, \pm \epsilon/2,\pm \epsilon$, or $\pm 3\epsilon/2$. At the end of this process we produce at most
\begin{equation}\label{eqn:number_of_functions_for_dense_holder}
3 \epsilon^{-1}{\mathsf{K} }^{\vol(M)/\epsilon^{d/\alpha}}
\end{equation}
candidate functions each encoded by a choice of initial value on $B_0$, and then an element in 
$\{1/2,-1/2,1,-1,3/2,-3/2,0\}^{\ell}$ where $\ell=O(\vol(M)/\epsilon^{d/\alpha})$. Of course, not all of these will extend to a H\"older continuous function that is $L^2$ close to a H\"older function of norm $1$. 

 Using these candidate functions, we will now obtain our actual $\epsilon$-dense subset. 
We first claim that any $(1,\alpha)$-H\"older function, when restricted to $\Omega_{\epsilon}$, is $\epsilon$-close to a candidate (with respect to the supremum norm). For any such function $\phi\in C^{\alpha}(M)$, there will exist a candidate that is within $\epsilon$ of the value that $\phi$ takes on $\Omega_{\epsilon}$. This is because as we traverse the set of boxes, we allow ourselves to vary the value on adjacent boxes in increments of $\epsilon/2$. Hence, as $\phi$ cannot change by a value of more than $\epsilon$ between two adjacent boxes, and the candidate can vary by increments of $\epsilon/2$, there is necessarily such a candidate.

Now, discard any candidate function $\phi$ that is not $\epsilon$-close to a $(1,\alpha)$-H\"older function restricted to $\Omega_{\epsilon}$. For the candidates $\phi$ that remain, choose such an $\epsilon$-close H\"older function $\widehat{\phi}\in C^{\alpha}(M)$. We claim that these functions form a $2\epsilon$-dense set. To see this first, note that for any $(1,\alpha)$-H\"older function $\psi$, by the previous paragraph, there exists a candidate function $\phi\colon \Omega_{\epsilon}\to \R$ as well as a $(1,\alpha)$-H\"older function $\widehat{\phi}$ that is $\epsilon$-close to $\phi$ on $\Omega_{\epsilon}$. But this means that $\psi$ and $\widehat{\phi}$ are $(1,\alpha)$-H\"older functions that are $2\epsilon$-close on $\Omega_{\epsilon}$. Due to H\"older-ness, as each of $\psi$ and $\widehat{\phi}$ can vary by at most $\epsilon/2$ on any box $B_i$, and they are within $2\epsilon$ at the center of the box, it follows that $\abs{\psi-\widehat{\phi}}\le 3\epsilon$. Hence $\|\psi-\widehat{\phi}\|_{L^2}\le 3\epsilon$. 

Now we can check that the conditions of Definition \ref{defn:SFOP} are satisfied. Let $\epsilon(N)=(1/\ln N)^2$. Then we can take $\abs{\mc{F}_N}$ to be order 
\[
\frac{1}{(\ln N)^2} e^{(\ln \mathsf{K})(\vol M)  (\ln N)^{\frac{2d}{\alpha}}},
\]
which is summable when multiplied by $e^{-N^{\delta}}$. Thus \eqref{eqn:SFOP_summability} holds in this case, and we may conclude that $C^{\alpha}(M)$ satisfies Definition \ref{defn:SFOP}.

\noindent\textbf{Case 2:} Hilbertian Sobolev spaces $\mc{H}^s(M)$, $s>0$. In this case the argument will be a bit different because $\mc{H}^s$ functions are localized in frequency but not in space but is simpler because the norm is already defined in terms of $L^2$. 

To begin, note that by the definition of the $\mc{H}^s$ norm, if $\phi$ is a function with $\|\phi\|_{\mc{H}^s}^2=1$, then $\sum_{\lambda \ge \lambda_0} \|\phi_{\lambda}\|^2\lambda^{2s}\le 1$. Thus the total $L^2$ mass of $\phi$ carried by modes of order greater than $(\epsilon/2)^{-1/2s}$ must be size at most $\epsilon/2$. Thus we can disregard the modes of order greater than $(\epsilon/2)^{-1/2s}$. Due to the Weyl Law, there are order $\lambda^{d/2}$ modes of less than $\lambda$. Note that abstractly the functions in $\mc{H}^s$ with no modes greater than $(\epsilon/2)^{-1/2s}$ is isomorphic to $L^2$ of a counting measure on $(\epsilon/2)^{-1/2s}$ points: there are just additional weights. In particular, the map from the standard copy of $L^2$ on $(\epsilon/2)^{-1/2s}$ to our weighted copy is a contraction. Thus the number of points needed to form an $\epsilon$-net is bounded above by the number of points needed to form an $\epsilon$-net in the usual unit sphere in $\R^{(\epsilon/2)^{-1/2s}}$. By \cite[Lem.~5.2]{vershynin2012introduction} finding an $\epsilon$-net in $S^{n-1}$ requires at most $(1+2/\epsilon)^n$ points. Thus to be $L^2$ $\epsilon$-dense in the unit ball of $\mc{H}^s$, it suffices to use $e^{\epsilon^{-d/s}\ln(1+2/\epsilon)}$ functions. 

To satisfy Definition \ref{defn:SFOP}, let $\epsilon(N)=1/(\ln N)^2$. Applying the conclusion of the previous paragraph, we see that this requires at most $e^{\ln(N)^{3d/s}}$ functions as long as $N$ is sufficiently large. In particular when we multiply by $e^{-N^\delta}$ the result is summable so \ref{eqn:SFOP_summability} holds, and hence Definition \ref{defn:SFOP} is satisfied by $\mc{H}^s(M)$.
\end{proof}

\subsection{Consequences of mixing.}
We record below several straightforward consequences of exponential mixing which will be used below.

\begin{lemma}\label{claim:mean_tail_ge_gamma}
Suppose that we have annealed exponential mixing for the $1$-point motion on a Banach space $\mc{B}\subseteq L^2(\vol)$ for a measure $\mu$ on $\Diff^1_{\vol}(M)$ with rate $\lambda$. Then for any $\eta>0$, there exists $\gamma>0$ and $D_1>0$ such that 
\[
\abs{\sum_{\ell>\gamma \ln N} \E\left[\int AA\circ f^\ell\,d\vol\right]}\le D_1N^{-\eta}\|A\|^2_{\mc{B}}.
\]
\end{lemma}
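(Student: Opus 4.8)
The plan is to use annealed exponential mixing directly to bound each term in the tail, then sum the resulting geometric series. Annealed exponential mixing on $\mc{B}$ (which we may apply since $\mc{B}\subseteq L^2$ is one of the spaces covered by the hypotheses) gives a rate $\lambda>0$ and a constant $C$ so that $\left|\E\left[\int A\,A\circ f^\ell\,d\vol\right]\right|\le Ce^{-\lambda\ell}\|A\|_{\mc{B}}^2$ for every $\ell$. So the tail is bounded by $C\|A\|_{\mc{B}}^2\sum_{\ell>\gamma\ln N}e^{-\lambda\ell}$, and the geometric sum equals $\frac{C}{1-e^{-\lambda}}e^{-\lambda\lceil\gamma\ln N\rceil}\le \frac{C}{1-e^{-\lambda}}N^{-\lambda\gamma}$.

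The only choice to make is $\gamma$: I would set $\gamma=\eta/\lambda$ (or anything at least that large), so that $N^{-\lambda\gamma}\le N^{-\eta}$, and then take $D_1=\frac{C}{1-e^{-\lambda}}$. That immediately gives the claimed bound
\[
\left|\sum_{\ell>\gamma\ln N}\E\left[\int A\,A\circ f^\ell\,d\vol\right]\right|\le D_1 N^{-\eta}\|A\|_{\mc{B}}^2.
\]

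There is essentially no obstacle here: the statement is a routine consequence of summing a geometric series against the annealed mixing estimate, and the mild subtlety is only bookkeeping — making sure the rate $\lambda$ for the one-point motion is available on $\mc{B}$ (it is, by assumption, or by interpolation/Corollary~\ref{cor:annealed_interpolation} and Remark~\ref{rem:holder_sobolev_equivalence} in the Hölder/Sobolev cases), and that the exponent $\lambda\gamma$ beats $\eta$, which is why $\gamma$ is chosen depending on $\eta$ and $\lambda$. I would phrase the proof in two or three sentences along exactly these lines.
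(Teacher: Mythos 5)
Your proof is correct and is essentially the paper's own argument: apply the annealed exponential mixing bound $\bigl|\E\bigl[\int A\,A\circ f^\ell\,d\vol\bigr]\bigr|\le Ce^{-\lambda\ell}\|A\|_{\mc{B}}^2$ termwise and sum the geometric tail over $\ell>\gamma\ln N$, choosing $\gamma$ large enough (e.g.\ $\gamma\ge\eta/\lambda$) that $N^{-\lambda\gamma}\le N^{-\eta}$. Your version simply makes the choice of $\gamma$ and $D_1$ explicit, which the paper leaves implicit.
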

\begin{proof}
The proof follows because by annealed exponential mixing:
\[
\abs{\E\left[\int AA\circ f^\ell\,d\vol\right]}\le e^{-\lambda k}\|A\|^2_{\mc{B}}. 
\]
The conclusion then follows by summing $\DS \sum_{\ell\ge \gamma \ln N} e^{-\ell \lambda}$. 
\end{proof}

We now record an additional estimate on $\mathcal{D}(A)$ that will be useful later.

\begin{lemma}\label{lem:mcD_est}
Suppose that there is annealed exponential mixing on $\mathcal{H}^p_0(M)$, $p\ge 0$.
Define 
\begin{equation}
\mathcal{D}(A,B)\coloneqq \int AB\,dx+2\sum_{k=1}^{\infty}\mathbb{E}\left[\int AB\circ F^k\,dx\right],
\end{equation}
so that $\mathcal{D}(A,A)=\mathcal{D}(A)$. Then for any $s>0$, and any $A,B\in \mathcal{H}^p_0(M)$, it follows that:
\begin{equation}
\abs{\mathcal{D}(A)-\mathcal{D}(B)}=\abs{\mathcal{D}(A-B,A+B)}\le C_{p,s}\|A-B\|_{\mathcal{H}^s_0}\|A+B\|_{\mathcal{H}^s_0}. 
\end{equation}
The same holds, \emph{mutatis mutandis} if we have annealed exponential mixing on $C^{\alpha}(M)$ or $L^2(M)$.  
\end{lemma}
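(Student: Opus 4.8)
The plan is to exploit bilinearity of $\mathcal{D}(\cdot,\cdot)$ together with the polarization-type identity $\mathcal{D}(A)-\mathcal{D}(B)=\mathcal{D}(A-B,A+B)$, which follows directly from expanding $\mathcal{D}(A,A)-\mathcal{D}(B,B)$ and noting $\mathcal{D}(A,B)=\mathcal{D}(B,A)$ (the latter because, after averaging over $\omega$, the two-sided correlations are symmetric: $\mathbb{E}[\int A\, B\circ F^k] = \mathbb{E}[\int B\, A\circ F^{-k}]$ and one re-sums, or more simply one just checks term-by-term that both $\mathcal{D}(A,B)$ and $\mathcal{D}(B,A)$ equal $\int AB + 2\sum_k \mathbb{E}[\int A\,B\circ F^k]$ once one observes the summands agree). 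Granting this reduction, it suffices to prove the single bound $|\mathcal{D}(U,V)|\le C_{p,s}\|U\|_{\mathcal{H}^s_0}\|V\|_{\mathcal{H}^s_0}$ for all $U,V\in\mathcal{H}^p_0(M)$ and all $s>0$, and then apply it with $U=A-B$, $V=A+B$.

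To bound $\mathcal{D}(U,V)$, first invoke Corollary \ref{cor:annealed_interpolation}: since annealed exponential mixing holds on $\mathcal{H}^p_0$ for the given $p\ge 0$, it holds on $\mathcal{H}^s_0$ for every $s>0$, say with rate $\alpha_s>0$ and constant $C_s$. Then each term in the series defining $\mathcal{D}(U,V)$ is controlled: $|\mathbb{E}[\int U\, V\circ F^k\,dx]|\le C_s e^{-\alpha_s k}\|U\|_{\mathcal{H}^s_0}\|V\|_{\mathcal{H}^s_0}$, and the $k=0$ term is bounded by $\|U\|_{L^2}\|V\|_{L^2}\le \|U\|_{\mathcal{H}^s_0}\|V\|_{\mathcal{H}^s_0}$ (using that the $\mathcal{H}^s_0$ norm dominates $L^2$ for $s>0$, since $\lambda_i\ge \lambda_1>0$). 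Summing the geometric series in $k$ gives
\[
|\mathcal{D}(U,V)|\le\Bigl(1+\frac{2C_s}{1-e^{-\alpha_s}}\Bigr)\|U\|_{\mathcal{H}^s_0}\|V\|_{\mathcal{H}^s_0},
\]
so we may take $C_{p,s}=1+2C_s/(1-e^{-\alpha_s})$, which depends only on $p$ (through the fixed hypothesis) and $s$. Finally, substitute $U=A-B$ and $V=A+B$ and use the polarization identity to conclude.

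The Hölder (or $L^2$) variants follow \emph{mutatis mutandis}: Corollary \ref{cor:annealed_interpolation} upgrades annealed mixing on $C^r$ to all $C^\alpha$, $\alpha>0$, and the same geometric-series argument applies with $\|\cdot\|_{\mathcal{H}^s_0}$ replaced by $\|\cdot\|_{C^\alpha}$ (or $\|\cdot\|_{L^2}$, in which case no interpolation is needed and the rate is the given one). I do not expect any serious obstacle here; the only mild subtlety is verifying the symmetry $\mathcal{D}(A,B)=\mathcal{D}(B,A)$ needed for the polarization identity, but this is immediate once one writes both sides out and uses that averaging over the IID environment makes the $k$-step correlation symmetric in its two arguments — equivalently, one can bypass symmetry entirely by just bounding $\mathcal{D}(A)-\mathcal{D}(B)$ directly via a telescoping $\int A^2-B^2 = \int(A-B)(A+B)$ and $\int A\,A\circ F^k - \int B\,B\circ F^k = \int (A-B)\,A\circ F^k + \int B\,(A-B)\circ F^k$, then applying the mixing bound to each piece.
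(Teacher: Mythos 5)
Your overall route matches the paper's: both reduce the lemma to (i) an identity expressing $\mathcal{D}(A)-\mathcal{D}(B)$ through the bilinear form evaluated at $A-B$ and $A+B$, and (ii) a termwise bound $|\mathbb{E}[\int U\,V\circ F^k\,dx]|\le Ce^{-\alpha k}\|U\|\|V\|$ from annealed mixing summed as a geometric series, with interpolation (Corollary \ref{cor:annealed_interpolation}) taking care of arbitrary $s>0$. The paper's proof is exactly this, stated tersely.

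The one genuine soft spot is your justification of the symmetry $\mathcal{D}(A,B)=\mathcal{D}(B,A)$: the summands do \emph{not} agree term by term. Indeed $\mathbb{E}[\int A\,(B\circ F^k)\,dx]=\int A\,(P^kB)\,dx$ where $P\phi=\int\phi\circ f\,d\mu(f)$ is the annealed Markov operator, while $\mathbb{E}[\int B\,(A\circ F^k)\,dx]=\int A\,((P^*)^kB)\,dx$; these coincide for all $A,B$ only if $P$ is self-adjoint on $L^2(\vol)$, i.e. only if $\mu$ is invariant under $f\mapsto f^{-1}$, which is not assumed. Rewriting via $F^{-k}$ refers to the time-reversed process and does not re-sum into the forward correlations, so this argument does not close. (The paper's own proof asserts the identity via rearrangement alone and glosses over the same point; strictly, the identity holds for the symmetrized form $\tfrac12(\mathcal{D}(U,V)+\mathcal{D}(V,U))$.) Your telescoping bypass is correct and is enough for how the lemma is actually used in Step 2 of Theorem \ref{ThAnQCLT}, but as written it gives $|\mathcal{D}(A)-\mathcal{D}(B)|\le C\|A-B\|\,(\|A\|+\|B\|)$ rather than the stated $\|A-B\|\,\|A+B\|$ (these differ, e.g., when $B\approx -A$). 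The clean fix, requiring no symmetry, is to split symmetrically: $A\,(A\circ F^k)-B\,(B\circ F^k)=\tfrac12\bigl[(A-B)\,(A+B)\circ F^k+(A+B)\,(A-B)\circ F^k\bigr]$, whence $\mathcal{D}(A)-\mathcal{D}(B)=\tfrac12\bigl(\mathcal{D}(A-B,A+B)+\mathcal{D}(A+B,A-B)\bigr)$, and each term is controlled by exactly the geometric-series estimate you already wrote, yielding the stated bound.
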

\begin{proof}
The lemma follows from two estimates. First, by exponential mixing in \eqref{EqAn2Mix},
$\mathcal{D}(A)-\mathcal{D}(B)=\mathcal{D}(A-B,B+A)$
since the series defining these quantities are absolutely convergent and hence can be rearranged. The second estimate is
$ 
\abs{\mathcal{D}(A,B)}\le C\|A\|_{\mathcal{H}^p_0}\|B\|_{\mathcal{H}^p_0}.
$
It holds because the norm of the $k$th term in the definition of $\mathcal{D}$ is $\|A\|_{\mathcal{H}^p_0}\|B\|_{\mathcal{H}^p_0}e^{-\alpha k}$ from annealed exponential mixing \eqref{EqAn2Mix}. 
\end{proof}

\subsection{Splitting the variance.}
Let 
\begin{equation}
S_N(A)(x)\coloneqq \sum_{n=0}^{N-1} A(F^n x),
\end{equation}
and 
\begin{equation}\label{eqn:cor_def}
\Cor_N(A,B)\coloneqq \int S_N(A)S_N(B)\,dx\hspace{1cm} V_N(A)\coloneqq \Cor_N(A,A).
\end{equation}

We will now show that quenched exponential mixing plus a polynomial tail on the constant as in \eqref{ThQMix} gives control on the quenched variance $V_N(A)$ defined above. When we study $V_{N}(A)$, there are order $N^2$ terms that arise in the definition. Those terms fit into two groups depending on the  value of $\ell$ in the expression 
$(A\circ F^i)( B\circ F^{i+\ell})$. First we will give a lemma showing that terms with $\ell$ small relative to $n$ are highly concentrated around their mean. To this end we define
\begin{equation}
D_{N,\ell,i}=\sum_{0\le k< N/\ell} \int A\circ F^{k\ell}B\circ F^{k\ell +i}\,d\vol.  \hspace{1cm} D_{N,\ell}=\sum_{0\le i< \ell} D_{N,\ell,i}. 
\end{equation}
We will further divide the terms into two regimes: one where we can rely on mixing, and the other where we will rely on a generic concentration estimate that does not use any mixing. 
\begin{equation}\label{eqn:gamma_def}
D^{\le \gamma}_{N,\ell} =\sum_{\ell \le \gamma \ln N}D_{N,\ell},\hspace{1cm} D^{\ge \gamma}_{N,\ell}=\sum_{\ell>\gamma \ln N} D_{N,\ell}. 
\end{equation}
Analogously with \eqref{eqn:cor_def}, we also define $D_{N,\ell}(A,B)$, $D_{N,\ell,i}(A,B)$, etc.

\subsection{Near diagonal terms.}
\begin{lemma}
    \label{claim:low_order_terms}
Suppose that $\mu$ is a compactly supported measure on $\Homeo_{\vol}(M)$. Suppose that $\mc{B}$ is a Banach space of functions $M\to \R$ satisfying Definition \ref{defn:SFOP}, such as $C^{\alpha}$ or $\mc{H}^s$, that is preserved by the action of the elements in the support of $\mu$. For any fixed $\gamma,\delta>0$, the following holds. For $\mu^{\N}$-a.e.~$\omega$ and all $A\in \mc{B}$,
\begin{equation}\label{eqn:Dnli_est}
\abs{D_{N,\ell,i}(A)-\frac{N}{\ell}\E\left[\int AA\circ f^\ell \,d\mu \right]}=o_{\omega}(N/(\ell\ln N))\|A\|^2_{\mc{B}}. 
\end{equation}
Furthermore, 
\begin{equation}\label{eqn:summed_est_le_gamma}
\abs{D^{\le \gamma}_{N}(A)-\mc{D}^2(A) }=o(N\|A\|^2_{\mc{B}}). 
\end{equation}
\end{lemma}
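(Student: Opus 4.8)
The plan is to prove \eqref{eqn:Dnli_est} first and then deduce \eqref{eqn:summed_est_le_gamma} by summing over the finitely many relevant pairs $(\ell, i)$ with $\ell \le \gamma \ln N$. The key structural observation for \eqref{eqn:Dnli_est} is that, with $\ell$ and $i$ fixed, the quantity $D_{N,\ell,i}(A) = \sum_{0 \le k < N/\ell} \int A\circ F^{k\ell} \, A\circ F^{k\ell+i}\, d\vol$ is a sum over the orbit of the skew product $T^\ell$ of a function of the form $g_A(\omega, x) = \int A(F^i_\omega x) A(x)\, dx$ composed appropriately—more precisely, the $k$-th summand is $\psi_A(T^{k\ell}(\omega, x))$ integrated out, where $\psi_A$ depends only on $\omega$ after integrating $x$. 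Because the driving sequence is IID, the increments indexed by disjoint blocks of length $\ell$ are independent; so $D_{N,\ell,i}(A)$ is literally a sum of $N/\ell$ IID random variables (each equal in law to $\int A(F^i_\omega x) A(x)\,dx$ with $\omega$ drawn from $\mu^{\otimes \ell}$... wait, more carefully: each block contributes $\int A(F^{k\ell}_\omega x) A(F^{k\ell+i}_\omega x)\,dx$, and these are IID across $k$ since they depend on disjoint coordinates $\omega_{k\ell}, \dots, \omega_{k\ell + i - 1}$ of the driving sequence, provided $i \le \ell$). Its mean is $\frac{N}{\ell}\E[\int A\, A\circ f^\ell\, d\vol]$—here one should be slightly careful, the mean of a single block is $\E[\int A(x) A(F^i_\omega x)\,dx]$, and after summing and accounting for the inner index $i$ running over $0 \le i < \ell$, the total $D_{N,\ell} = \sum_i D_{N,\ell,i}$ has mean $\frac{N}{\ell}\sum_{i=0}^{\ell-1}\E[\int A\, A\circ f^i\,d\vol]$, which telescopes toward $\mc D^2(A)$-type quantities; I will need to track the exact bookkeeping here against the definitions of $\mc D(A)$ and $D_{N,\ell}$.

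The heart of the matter is the uniformity over all $A \in \mc B$ simultaneously, which is exactly why Definition \ref{defn:SFOP} is invoked. First I would establish a concentration bound for a \emph{single} fixed $A$: since $D_{N,\ell,i}(A)$ is a sum of IID bounded random variables (bounded by $\|A\|_{L^2}^2 \le C_1^2 \|A\|_{\mc B}^2$ using property (1)), Azuma's inequality (Proposition \ref{prop:azuma}) or Hoeffding gives
\[
\Prob\Bigl( \bigl| D_{N,\ell,i}(A) - \tfrac{N}{\ell}\E[\textstyle\int A A\circ f^i d\vol] \bigr| > t \Bigr) \le 2\exp\Bigl( \frac{-t^2 \ell}{2 N C_1^4 \|A\|_{\mc B}^4} \Bigr).
\]
Then, to pass to all $A$ at once: fix the approximating sets $\mc F_N$ from Definition \ref{defn:SFOP}, which are $\epsilon(N) = o(1/\ln N)$-dense with respect to $L^2$. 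Apply the single-function bound with $t$ of order $N/(\ell \ln N)$ to every element of $\mc F_N$; by property \eqref{eqn:SFOP_summability}, $\sum_N |\mc F_N| e^{-N^\delta/C} < \infty$, and since the Azuma exponent $-t^2\ell/(2NC_1^4) \approx -N/(\ell^2 (\ln N)^2)$ dominates $-N^\delta/C$ for $\ell \le \gamma \ln N$ (here $\ell^2 (\ln N)^2 \le \gamma^2 (\ln N)^4$, so the exponent is $\gtrsim N/(\ln N)^4 \gg N^\delta$), Borel--Cantelli gives that a.s. for large $N$ the estimate holds simultaneously for all $\hat A \in \mc F_N$. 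For a general $A$ with $\|A\|_{\mc B} = 1$, pick $\hat A \in \mc F_N$ with $\|A - \hat A\|_{L^2} \le \epsilon(N)$; then the difference $D_{N,\ell,i}(A) - D_{N,\ell,i}(\hat A)$ is controlled deterministically using the Cauchy--Schwarz estimate $|\int (A\circ F^{k\ell})(A \circ F^{k\ell+i}) - (\hat A \circ F^{k\ell})(\hat A\circ F^{k\ell+i})| \le \|A - \hat A\|_{L^2}(\|A\|_{L^2} + \|\hat A\|_{L^2}) \le 2 C_1 \epsilon(N)$ per term (using volume-preservation so $F^{k\ell}_*\vol = \vol$, plus Remark \ref{rem:bounded_L2_norm}), summing to $O((N/\ell)\epsilon(N)) = o(N/(\ell \ln N))$; the same bound handles the difference of the means $\E[\int A A\circ f^i] - \E[\int \hat A \hat A \circ f^i]$. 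This yields \eqref{eqn:Dnli_est} by homogeneity.

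For \eqref{eqn:summed_est_le_gamma}: sum \eqref{eqn:Dnli_est} over $0 \le i < \ell$ and $\ell \le \gamma \ln N$. There are at most $(\gamma \ln N)^2$ such pairs, and each error is $o(N/(\ell \ln N)) \le o(N/\ln N)$, so the total error from this step is $o(N \gamma^2 \ln N / \ln N \cdot (\text{something}))$—I need to be a little careful, summing $o(N/(\ell \ln N))$ over $i < \ell$ gives $o(N/\ln N)$ per $\ell$, then over $\ell \le \gamma \ln N$ gives $o(N)$; good. It remains to identify $\sum_{\ell \le \gamma \ln N} \frac{N}{\ell}\sum_{i=0}^{\ell-1}\E[\int A A\circ f^i\,d\vol]$ with $\mc D^2(A) + o(N)$. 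Expanding, $\sum_{\ell \le \gamma\ln N}\frac{N}{\ell}\sum_{i<\ell}c_i$ where $c_i = \E[\int A A\circ f^i\,d\vol]$; since $\sum_\ell \frac{1}{\ell}\sum_{i<\ell}c_i$... actually the natural identity is $V_N \approx N(c_0 + 2\sum_{k\ge 1}c_k) = N\mc D(A)$, and $\mc D^2(A)$ in the statement presumably means $\mc D(A)$ squared or is notation for $\mc D(A) \cdot$ something—I will match this against the paper's convention (the statement writes $\mc D^2(A)$ and Proposition \ref{prop:quenched_variance} writes $\cD^2(A)$, so this is their notation for the limiting variance, presumably $N^{-1}V_N \to \cD^2(A)$, and I'll reconcile the block-sum bookkeeping accordingly, using Lemma \ref{claim:mean_tail_ge_gamma} to discard the tail $\ell > \gamma \ln N$ contributions to the mean at cost $O(N^{-\eta})$). \textbf{The main obstacle} I anticipate is precisely this bookkeeping: correctly organizing the $N^2$ terms of $V_N$ into the block-diagonal sums $D_{N,\ell,i}$, verifying the IID block structure survives the constraint $i < \ell$ (terms with $i \ge \ell$ are handled separately as the ``high order'' / mixing regime in Lemma \ref{claim:control_of_high_order_variance}), and matching the telescoped mean to the precise constant $\cD^2(A)$ the paper has defined—the probabilistic concentration step, while requiring the Definition \ref{defn:SFOP} machinery, is comparatively routine once the IID structure is isolated.
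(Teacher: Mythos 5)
Your route is the same as the paper's: a concentration inequality (Azuma/Hoeffding, valid since after the volume-preserving change of variables the centered block summands are independent and bounded by $\|A\|_{L^2}^2\le C_1^2\|A\|_{\mc B}^2$) applied to every member of the $L^2$-nets $\mc F_N$ of Definition \ref{defn:SFOP}, Borel--Cantelli via \eqref{eqn:SFOP_summability} (the logarithmically many pairs $(\ell,i)$ with $\ell\le\gamma\ln N$ are harmless), then passage to an arbitrary $A$ by bilinearity/Cauchy--Schwarz together with volume preservation, with the $\epsilon(N)=o(1/\ln N)$ denseness supplying the $o(N/(\ell\ln N))$ error, and finally Lemma \ref{claim:mean_tail_ge_gamma} to control the tail of the means when passing to \eqref{eqn:summed_est_le_gamma}. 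That core is sound and matches the paper. One small repair: with a fixed threshold $t\asymp N/(\ell\ln N)$ on the net you only obtain a big-$O$ there; take $t=N^{1/2+\delta}$ as the paper does (the exponent $t^2\ell/(CN)\gtrsim N^{2\delta}$ still beats $|\mc F_N|$ by \eqref{eqn:SFOP_summability}), or run your bound along thresholds $\epsilon_j N/(\ell\ln N)$ with $\epsilon_j\downarrow 0$, so that the little-$o$ in \eqref{eqn:Dnli_est} comes from the approximation term.

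The genuine gap is exactly the ``bookkeeping'' you defer: it is not optional, and under your literal reading of the displayed definition of $D_{N,\ell,i}$ (gap $i$, first factor sampled at multiples of $\ell$) the second assertion would in fact be false. Indeed each summand would then have mean $\E\left[\int A\,A\circ F^i\,d\vol\right]$, so the $i=0$ contributions alone would give $D^{\le\gamma}_N$ a mean of order $N\,\E\left[\int A^2\,d\vol\right]\sum_{\ell\le\gamma\ln N}\ell^{-1}\sim N\ln\ln N$, which cannot equal $N\cD(A)+o(N)$. The printed definition is garbled; the reading consistent with the centering $\frac N\ell\E\left[\int A\,A\circ f^\ell\right]$ in \eqref{eqn:Dnli_est} and with the factors $e^{-\lambda\ell}$ and shifts $\sigma^{\ell j+i}$ in the proof of Lemma \ref{claim:control_of_high_order_variance} is the gap-$\ell$, offset-$i$ block sum $D_{N,\ell,i}=\sum_{0\le k<N/\ell}\int (A\circ F^{k\ell+i})(A\circ F^{k\ell+i+\ell})\,d\vol$, so that $\sum_{0\le i<\ell}D_{N,\ell,i}$ collects essentially all pairs of times at distance exactly $\ell$. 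With this reading your change-of-variables argument again gives IID summands, now with common mean $\E\left[\int A\,A\circ F^\ell\,d\vol\right]$, your concentration and approximation steps apply verbatim, $D_{N,\ell}$ has mean $\approx N\,\E\left[\int A\,A\circ F^\ell\right]$, and summing over $\ell\le\gamma\ln N$ and invoking Lemma \ref{claim:mean_tail_ge_gamma} identifies the limit as $N\cD(A)+o(N)$ (what the statement writes as $\cD^2(A)$, up to the diagonal and factor-of-two conventions). So your proposal is the paper's proof in all essentials, but the identification of the centering and of the limiting constant has to be carried out, and doing so requires correcting the definition you took at face value rather than treating the mismatch as a detail to reconcile later.
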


\begin{proof}
To begin, note that by rescaling it suffices to establish the claim for $A$ of norm $1$.

To begin for each $N$, pick a family of functions $\mc{F}_N\subset \mc{B}$ as in Definition \ref{defn:SFOP}. By Remark \ref{rem:bounded_L2_norm}, we may assume that these functions have $L^2$ norm uniformly bounded by some constant $D$.  For a function $A\in \mc{B}$, let $P(A,N,\ell,i)$ be denote the probability below:
\begin{align}
P(A,N,\ell,i)\coloneqq 
\mathbb{P}\left(\abs{D_{N,\ell,i}(A)-\frac{N}{\ell}\E\left[\int AA\circ f^\ell \,d\vol \right]}\ge N^{1/2+\delta} \right).&
\label{eqn:bad_event_ilB}
\end{align}
Then applying Azuma's inequality (\ref{prop:azuma}) implies that for all $A$ such that $\|A\|_{\mc{B}}\le 2$:
\[
P(A,N,\ell,i)\le 2\exp\left(-N^{2\delta}/2C\right).
\]
Thus as long as 
\begin{equation}
\sum_{N\ge 0} \abs{\mc{F}_N}\exp(-N^{2\delta}/2C)<\infty,
\end{equation}
only finitely many of the events in equation \eqref{eqn:bad_event_ilB} occur; this is assured by item \eqref{item:L2_denseness} in Definition \ref{defn:SFOP}. 

We now need to extend this to cover all functions; not just those in $\mc{F}_N$. Now consider an arbitrary $B\in \mc{B}$ with $\|B\|_{\mc{B}}=1$. By Item \eqref{item:L2_denseness}, equation \eqref{eqn:L2_dense2}, for each $N$ there exists $\widehat{B}=\widehat{B}_N\in \mc{F}_N$ such that $\|B-\widehat{B}\|_{L^2}<\epsilon(N)=o(1/\ln N)$. Hence
\[
    \int A(B-\widehat{B})\circ f^k\,d\vol =  o(1/\ln N)\|A\|_{\mc{B}}.
\]
In particular, note that 
\begin{align*}
D_{N,\ell,i}(B)\!=\!
D_{N,\ell,i}(\wh{B}+(B-\wh{B}))\!&\!=\!
D_{N,\ell,i}(\wh{B})\!+\!2D_{N,\ell,i}(\wh{B},(B-\wh{B}))\!+\!\!D_{N,\ell,i}(B-\wh{B})\\&=I_D+II_D+III_D.
\end{align*}
Similarly, let \(E_k(B)=N\ell^{-1}\E\left[\int BB \circ f^k\,d\vol\right]\) and define $E_k(A,B)$ analogously. Then we can expand
\begin{align}
E_k(B)=E_k(\wh{B})+E_k(\wh{B},(B-\wh{B}))+E_k(B-\wh{B})=I_B+II_B+III_B.
\end{align}

We now compare $D_{N,\ell,i}(B)$ with $\ell^{-1}N\E\left[\int BB\circ f^k\right]=E_k(B)$. We expect $I_D$ to be close $I_B$, while the $II_{*}$ and $III_{*}$, $*\in \{B,D\}$, terms will be small enough to disregard. 

First we consider the $I_B$ and $I_D$ terms. As only finitely many of the events in \eqref{eqn:bad_event_ilB} occur for the functions in $\mc{F}_N$, we see that there exists a constant $C(\omega)$ not depending on $\ell,i,N$, such that 
\begin{equation}
\abs{I_D-I_B}=\abs{D_{N,\ell,i}(\wh{B})-\ell^{-1}N\E\left[\int \wh{B}\wh{B}\circ f^k\right]}\le C(\omega)N^{1/2+\delta}. 
\end{equation}

Next, the $II_*$ and $III_*$ terms are small because of equation \eqref{eqn:L2_dense2}: Each summand defining these terms has size $o(1/\ln N)$ due to \eqref{eqn:L2_dense2} and the uniform bound on 
$\|\widehat{B}\|_{L^2}$, and there are at most $N/\ell$ summands in each. So, their total size is 
\[
\abs{II_B}+\abs{III_B}+\abs{II_D}+\abs{III_D}=o(N/(\ell \ln N)). 
\]
Combining these estimates, we see that for all $A$ such that $\|A\|_{\mc{B}}=1$,
\begin{equation}
\abs{D_{N,\ell,i}(A)-\frac{N}{\ell}\E\left[\int AA\circ f^\ell \,d\vol \right]}=o(N/(\ell \ln N))\|A\|^2_{\mc{B}}. 
\end{equation}
By rescaling, the above line then holds for all $A\in \mc{B}$, so we have shown \eqref{eqn:Dnli_est}.

Then equation \eqref{eqn:summed_est_le_gamma} follows by applying Lemma
\ref{claim:mean_tail_ge_gamma} as the sum of the omitted terms are a (deterministic) sublinear contribution. 
\end{proof}

\subsection{Off diagonal terms.}
Next we will prove a claim concerning the terms where $\gamma$ in \eqref{eqn:gamma_def} is large enough that mixing is actually happening. 

\begin{lemma}\label{claim:control_of_high_order_variance}
Suppose that $\mu$ is a measure on $\Diff^1_{\vol}(M)$ satisfying quenched exponential mixing with a polynomial tail for functions in a Banach space $\mc{B}$, i.e.\ there is a polynomial tail on the constant $C(\omega)$ controlling quenched exponential mixing: $\mathbb{P}(C(\omega)>D)\le D^{\beta}$ for some $\beta>0$, where $C(\omega)$ is defined as in \eqref{EqQEM}. 
Then, for any $\eta>0$, there exists $\gamma>0$ such that  for $\mu^{\N}$-a.e.~$\omega$ there exists $C'(\omega)$ such that for all $N$ and all $A\in \mc{B}$, 
\[
\abs{D_{N}^{\ge \gamma}(A)}\le C'(\omega)N^{-\eta}\|A\|^2_{\mc{B}}. 
\]
\end{lemma}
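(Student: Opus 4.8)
The plan is to estimate $D_N^{\ge\gamma}(A)$ termwise using quenched exponential mixing \eqref{EqQEM}. By construction of the threshold in \eqref{eqn:gamma_def}, every correlation integral $\int A\circ F^m_\omega\cdot A\circ F^n_\omega\,d\vol$ occurring in $D_N^{\ge\gamma}(A)$ has $0\le m,n< N$ and time-lag $\abs{m-n}>\gamma\ln N$; so by \eqref{EqQEM} each such integral is bounded by $C(\omega)\,N\,e^{-\beta\gamma\ln N}\|A\|_{\mc{B}}^2=C(\omega)\,N^{1-\beta\gamma}\|A\|_{\mc{B}}^2$, while the number of these integrals is only polynomial in $N$. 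Choosing $\gamma$ large compared with $\eta$ and $1/\beta$ will then make the total at most $C'(\omega)N^{-\eta}\|A\|_{\mc{B}}^2$, with a single random constant valid for all $A$ and $N$.

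Concretely, unwinding \eqref{eqn:gamma_def} gives $D_N^{\ge\gamma}(A)=\sum_{\ell>\gamma\ln N}D_{N,\ell}(A)$, and each $D_{N,\ell}(A)$ is, up to reindexing, the lag-$\ell$ autocorrelation sum $\sum_{n}\int A\circ F^n_\omega\cdot A\circ F^{n+\ell}_\omega\,d\vol$ over $0\le n<N$ (there are at most $N$ terms, each at time-lag exactly $\ell>\gamma\ln N$). Applying quenched exponential mixing with both observables equal to $A$, for $\mu^{\N}$-a.e.\ $\omega$ there is a finite $C(\omega)$ (the constant in \eqref{EqQEM}) such that, simultaneously for all $A\in\mc{B}$, all $n<N$ and all $\ell$,
\[
\abs{\int A\circ F^n_\omega\cdot A\circ F^{n+\ell}_\omega\,d\vol}\le C(\omega)\,N\,e^{-\beta\ell}\,\|A\|_{\mc{B}}^2 .
\]
Summing over the at most $N$ values of $n$ and over $\ell>\gamma\ln N$,
\[
\abs{D_N^{\ge\gamma}(A)}\le C(\omega)\,N^2\,\|A\|_{\mc{B}}^2\sum_{\ell>\gamma\ln N}e^{-\beta\ell}\le \frac{C(\omega)}{1-e^{-\beta}}\,N^{2-\beta\gamma}\,\|A\|_{\mc{B}}^2 .
\]
Given $\eta>0$, fix any $\gamma\ge(2+\eta)/\beta$; then $2-\beta\gamma\le-\eta$, so $N^{2-\beta\gamma}\le N^{-\eta}$ for every $N\ge1$ and the last display yields $\abs{D_N^{\ge\gamma}(A)}\le C'(\omega)N^{-\eta}\|A\|_{\mc{B}}^2$ with $C'(\omega):=C(\omega)/(1-e^{-\beta})$. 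By the polynomial-tail hypothesis $C(\omega)$, and hence $C'(\omega)$, is $\mu^{\N}$-a.s.\ finite (and inherits the same polynomial tail); and $C'(\omega)$ depends on neither $A$ nor $N$, precisely because the defining inequality \eqref{EqQEM} of quenched exponential mixing already comes with one random constant good for all observables.

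The only genuinely delicate step is the bookkeeping of the second paragraph: one must verify that $D_N^{\ge\gamma}$ unwinds into correlation integrals \emph{all} of whose time-lags exceed $\gamma\ln N$, with at most $O(N)$ of them at each lag, so that the power of $N$ in front of $\sum_{\ell>\gamma\ln N}e^{-\beta\ell}$ is pinned down exactly and $\gamma$ can be tuned to overwhelm it. If one prefers to avoid the reindexing, one may instead bound each of the at most $N^2$ terms of $D_N^{\ge\gamma}$ crudely by $C(\omega)N^{1-\beta\gamma}\|A\|_{\mc{B}}^2$ and take $\gamma\ge(3+\eta)/\beta$ instead; the rest of the argument is unchanged. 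Everything else---almost-sure finiteness, the polynomial tail, and uniformity in $A$ and in $N$---is inherited verbatim from the corresponding properties of the quenched mixing constant $C(\omega)$.
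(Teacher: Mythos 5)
Your proof is correct, but it takes a genuinely different and more elementary route than the paper. You apply the pathwise bound \eqref{EqQEM} term by term: since a single a.s.\ finite constant $C(\omega)$ works simultaneously for all observables, all base times and all lags, each lag-$\ell$ correlation with base time below $N$ is at most $C(\omega)N e^{-\beta\ell}\|A\|^2_{\mc{B}}$ (with $\beta$ the mixing rate of \eqref{EqQEM}, not the tail exponent of the lemma --- beware the notational clash), and summing the at most $N$ terms per lag over $\ell>\gamma\ln N$ gives $C'(\omega)N^{2-\beta\gamma}\|A\|^2_{\mc{B}}$, so $\gamma\ge(2+\eta)/\beta$ suffices. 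Notably, your argument uses only the almost-sure finiteness of $C(\omega)$; the polynomial-tail hypothesis plays no role. The paper instead rewrites each correlation at base time $m$ as a time-zero correlation for the shifted word, bounds $|D_{N,\ell,i}(A)|$ by $\sum_j\min\{C(\sigma^{\ell j+i}\omega)e^{-\lambda\ell},1\}\,\|A\|^2_{\mc{B}}$, and then uses the polynomial tail (Claim \ref{claim:mini_variance_claim}) together with Chebyshev and Borel--Cantelli over $(N,\ell,i)$ to get an almost sure bound. That route leans on the stationary/IID structure and the tail of $C$ rather than on the linear-in-$N$ prefactor of \eqref{EqQEM} being available pathwise, and it would still function if one only had shifted constants with controlled tails; your route is shorter and avoids all concentration arguments, at the price of resting entirely on the strong ``one constant for all $N$ and $k$'' form of the hypothesis and of giving no tail control on $C'(\omega)$ (none is claimed in the statement, and none is used downstream in Proposition \ref{prop:quenched_variance}). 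Two small points to tidy up: read the prefactor in \eqref{EqQEM} as $C\max(N,1)$ so that the base-time-zero terms are not trivially bounded by zero (the paper uses it the same way), and, as you note, one must interpret the decomposition \eqref{eqn:gamma_def} as grouping the lag-$\ell$ correlations by residue class so that there are $O(N)$ terms per lag; your crude $N^2$-term fallback with $\gamma\ge(3+\eta)/\beta$ covers this in any case.
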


In the proof we shall use the following estimate.

\begin{claim}\label{claim:mini_variance_claim}
Let $C_k=\min\{C(\omega)e^{-\lambda k}, 1\}$. Then there exists $D_1>0$ such that
\[
\max\{\E[C_k],\Var(C_k)\}\le D_1e^{-\lambda k\beta/2}. 
\]
\end{claim}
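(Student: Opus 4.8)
The plan is to estimate $\E[C_k]$ and $\Var(C_k)$ directly from the polynomial tail hypothesis $\Prob(C(\omega)>D)\le D^{-\beta}$ by splitting on whether $C(\omega)$ is large or small. Note $C_k=\min\{C(\omega)e^{-\lambda k},1\}=1$ precisely when $C(\omega)\ge e^{\lambda k}$, and otherwise $C_k=C(\omega)e^{-\lambda k}\le 1$. So on the ``good'' event $\{C(\omega)<e^{\lambda k}\}$ we have a pointwise bound $C_k\le C(\omega)e^{-\lambda k}$, while on the ``bad'' event $\{C(\omega)\ge e^{\lambda k}\}$ we only know $C_k\le 1$, but this event has probability at most $e^{-\lambda k\beta}$.

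First I would bound $\E[C_k]$. Writing $\E[C_k]=\E[C_k\mathbf{1}_{C(\omega)<e^{\lambda k}}]+\E[C_k\mathbf{1}_{C(\omega)\ge e^{\lambda k}}]$, the second term is at most $\Prob(C(\omega)\ge e^{\lambda k})\le e^{-\lambda k\beta}$. For the first term, $\E[C_k\mathbf{1}_{C(\omega)<e^{\lambda k}}]\le e^{-\lambda k}\E[C(\omega)\mathbf{1}_{C(\omega)<e^{\lambda k}}]$, and I would bound $\E[C(\omega)\mathbf{1}_{C(\omega)<e^{\lambda k}}]$ by integrating the tail: it equals $\int_0^{e^{\lambda k}}\Prob(C(\omega)>t)\,dt\le 1+\int_1^{e^{\lambda k}} t^{-\beta}\,dt$, which is $O(1)$ if $\beta>1$, $O(\lambda k)$ if $\beta=1$, and $O(e^{\lambda k(1-\beta)})$ if $\beta<1$. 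In every case, after multiplying by $e^{-\lambda k}$, the first term is $O(e^{-\lambda k\min\{1,\beta\}}\cdot\mathrm{poly}(k))$, which is dominated by $e^{-\lambda k\beta/2}$ for $k$ large (and the finitely many small $k$ are absorbed into the constant $D_1$). Since $\beta/2<\min\{1,\beta\}$ when $\beta<2$ and $\beta/2<1$ when $\beta\ge 2$, both terms are $\le \tfrac{1}{2}D_1 e^{-\lambda k\beta/2}$ for suitable $D_1$.

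For the variance, since $\Var(C_k)\le \E[C_k^2]$ and $0\le C_k\le \min\{C(\omega)e^{-\lambda k},1\}$, I would observe $C_k^2\le C_k$ (because $C_k\le 1$), so $\E[C_k^2]\le \E[C_k]$, and the bound for $\E[C_k]$ already established applies verbatim. This gives $\max\{\E[C_k],\Var(C_k)\}\le D_1 e^{-\lambda k\beta/2}$ as claimed.

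The main obstacle — really the only subtlety — is handling the case $\beta\le 1$, where the truncated first moment $\E[C(\omega)\mathbf{1}_{C(\omega)<e^{\lambda k}}]$ is not uniformly bounded and grows like $e^{\lambda k(1-\beta)}$ (or $\lambda k$ when $\beta=1$); one must check that after the factor $e^{-\lambda k}$ this still decays strictly faster than $e^{-\lambda k\beta/2}$, i.e. that $1-\beta/2>\beta/2$, equivalently $\beta<1$, which holds, and the boundary case $\beta=1$ loses only a polynomial factor $\lambda k$ that is swallowed by the gap between $e^{-\lambda k/2}$ and $e^{-\lambda k\beta/2}=e^{-\lambda k/2}$ — wait, at $\beta=1$ these exponents coincide, so there one uses instead that $\lambda k\,e^{-\lambda k}\le D_1' e^{-\lambda k/2}$, which is true since $\lambda k\,e^{-\lambda k/2}$ is bounded. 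Thus in all cases the polynomial corrections are harmless and the exponent $\beta/2$ (rather than the sharper $\min\{1,\beta\}$) is chosen precisely to give a clean statement with a single constant $D_1$.
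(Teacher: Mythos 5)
Your proof is correct in substance and follows the same basic idea as the paper (truncate $C(\omega)$ at an exponential threshold and use the polynomial tail), but the execution differs in the choice of threshold. The paper cuts at $e^{\lambda k/2}$ rather than at the saturation level $e^{\lambda k}$: on $\{C(\omega)\le e^{\lambda k/2}\}$ one has the pointwise bound $C_k\le e^{-\lambda k/2}$, while the complementary event has probability at most $e^{-\lambda k\beta/2}$, so $\E[C_k]$ and $\E[C_k^2]$ are each at most $e^{-\lambda k/2}+e^{-\lambda k\beta/2}$ — no truncated-moment integral and no case analysis in $\beta$ are needed. Your cut at $e^{\lambda k}$ forces the tail integration $\E[C\mathbf{1}_{C<e^{\lambda k}}]\le\int_0^{e^{\lambda k}}\Prob(C>t)\,dt$ (note this is an inequality, not an equality) and the three cases $\beta<1$, $\beta=1$, $\beta>1$; the payoff is a sharper rate, roughly $\lambda\min\{1,\beta\}k$ versus the paper's $\lambda\min\{1,\beta\}k/2$, so your argument actually establishes the stated bound for all $\beta\le 2$ while the paper's three-line proof literally gives it only for $\beta\le 1$. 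Your handling of $\Var(C_k)$ via $C_k^2\le C_k$ is fine and matches what the paper leaves implicit.

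One slip: the assertion ``$\beta/2<1$ when $\beta\ge 2$'' is false, and for $\beta>2$ your good-event bound $O(e^{-\lambda k})$ does not yield $D_1e^{-\lambda k\beta/2}$. However, in that regime the claim as literally stated cannot hold in general (if $C(\omega)$ is bounded below by a positive constant, then $\E[C_k]\gtrsim e^{-\lambda k}$), and the paper's own argument likewise only produces the rate $\lambda\min\{1,\beta\}k/2$. The statement should therefore be read with exponent $\min\{1,\beta\}$, or after shrinking $\beta$; all that is used downstream (Lemma~\ref{claim:control_of_high_order_variance}) is that $\E[C_\ell]$ and $\Var(C_\ell)$ decay exponentially in $\ell$, which your argument certainly delivers.
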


\begin{proof}
Consider the set $\Omega_{1,k}$ of points $\omega$ with $C(\omega)\le e^{\lambda k /2}$. Then due to the polynomial tail, the complement of $\Omega_{1,k}$, $\Omega_{1,k}^c$, has mass $e^{-\lambda k\beta/2}$. On the set $\Omega_{1,k}$, $C_k\le e^{-\lambda k/2}$, while on $\Omega_{1,k}^c$, $C_k(\omega)\le 1$. The conclusion follows. 
\end{proof}

\begin{proof}[Proof of Lemma \ref{claim:control_of_high_order_variance}]
Observe that
\begin{equation}\label{eqn:c_l_controls_mixing}
\abs{D_{N,\ell,i}(A)}\le \left(\sum_{j=0}^{N/\ell} \min \{C(\sigma^{\ell j+i}(\omega))e^{-\lambda\ell},1\}\right)\|A\|^2_{\mc{B}}.  
\end{equation}
As the terms $\min \{C(\sigma^{\ell j+i}(\omega))e^{-\lambda \ell},1\}$ are independent, with variance at most $D_1e^{-\lambda k \beta/2}$ by Claim \ref{claim:mini_variance_claim}, the parenthetical term has variance of order 
$\DS 
D_1\frac{N}{\ell}e^{-\lambda k\beta/2}. 
$
Thus by Chebyshev,
\[
\mathbb{P}\left(\abs{\left(\sum_{j=0}^{N/\ell} \min \{C(\sigma^{\ell j+i}(\omega))e^{-\lambda\ell},1\}-\frac{N}{\ell} \E[C_{\ell}] \right)}>A\right)\le D_1\frac{\frac{N}{\ell}e^{-\lambda \ell \beta/2}}{A^2}.
\]
Now take $A_{N,\ell}=e^{-\ell\epsilon_1\beta}$. Then 
\begin{equation}\label{eqn:A_N_ell_bound1}
\mathbb{P}\left(\abs{\left(\sum_{j=0}^{N/\ell} \min \{C(\sigma^{\ell j+i}(\omega))e^{-\lambda \ell},1\}-\frac{N}{\ell} \E[C_{\ell}] \right)}\!>\!A_{N,\ell}\right)\!\!\le\!\! D_1N^{}e^{-\lambda \ell\beta/2+2\epsilon_1\ell}\ell^{-1}.
\end{equation}
As long as $\gamma$ is sufficiently large and $\epsilon_1$ is sufficiently small, this is summable over $N\ge 0$, $\ell\ge \gamma \ln N$, and $0\le i\le \ell$. Thus almost surely only finitely many of the events on the left hand side of \eqref{eqn:A_N_ell_bound1} occur. By the estimate on $\E[C_{\ell}]$ in Claim \ref{claim:mini_variance_claim}, as long as $\epsilon_1$ is sufficiently small, it follows from \eqref{eqn:c_l_controls_mixing} that there exists $D_2$ such that 
\begin{align*}
\|A\|_{\mc{B}}^{-2}\abs{\sum_{\ell \ge \gamma \ln N}\sum_{0\le i\le \ell} D_{N,\ell,i}(A)}\le& \!\!\!\!\sum_{\ell\ge \gamma \ln N}\sum_{i\le \ell} A_{N,\ell}+N\ell^{-1}\E[C_{\ell}]\\
\le& D_2\!\!\!\!\sum_{\ell\ge \gamma \ln N}\sum_{i\le \ell} e^{-\ell\epsilon_1\beta}+N\ell^{-1}e^{-\lambda \ell\beta/2}
\le D_3N^{-\eta}
\end{align*}
for some $\eta>0$ as long as $\gamma$ is sufficiently large and $\epsilon_1$ is sufficiently small. Note that if we increase $\gamma$, we can make $\eta$ as small as we like. 

In particular, it follows that
\[
\abs{D_{N}^{\ge \gamma}(A)}\le D_3N^{-\eta}\|A\|^2_{\mc{B}}.
\]
Thus as long as we choose $\gamma$ sufficiently large, the conclusion follows. 
\end{proof}

\subsection{Proof of Proposition \ref{prop:quenched_variance}.}
This is a straightforward conclusion from the estimates proven above. Recall that we fixed $\gamma>0$ and split
\[
V_N(A)=D_N^{\le \gamma}(A)+D_N^{\ge \gamma}(A).
\]
By Lemma \ref{claim:low_order_terms}, for all $\gamma>0$, 
a.e.~$\omega$ and all $A\in \mc{B}$, $D_N^{\le \gamma}(A)=N\mc{D}^2(A)+o(B)\|A\|^2_{\mc{B}}$. By Lemma~ \ref{claim:control_of_high_order_variance},  for a.e.~$\omega$ and all $A$, $D_N^{\ge \gamma}(A)=N^{-\eta}\|A\|^2_{\mc{B}}$ if $\gamma$ is chosen sufficiently large. Thus the conclusion holds.
\qed

\section{Central Limit Theorem}\label{sec:central_limit_theorem}
We are now ready to prove Theorem \ref{ThAnQCLT}. The idea is as follows. By studying the variance, it is straightforward to see that for a fixed function $A$,
the central limit theorem for $A$ will hold almost surely. 
Hence, it is immediate that for a countable, dense collection of functions the quenched Central Limit Theorem holds for a full measure set of trajectories. We then extend the Central Limit Theorem from this dense set to every function. 
As the sample variance converges to the annealed variance, to pass to the limit we can then conclude by using some quantitative estimates on the convergence of the sample variance.

\begin{proof}[Proof of Theorem ~\ref{ThAnQCLT}]
First we will give the proof in the $\mc{H}^s$ case, then at the end we will explain the adaptation of the argument to the H\"older case.

We divide the proof into several steps, each of which simplifies what we must check until we have reduced to checking convergence of a single characteristic function at rational frequencies. \vskip2mm

{\bf Step 1.} Let $N_m=m^a$ with $a>1/\eta$  where $\eta$ is the convergence rate in \eqref{Eq2DPolyChar}. It suffices to prove that for a.e.~$\omega$ and each $A\in \cH^s_0(M)$ that
$(N_m)^{-1/2} S_{N_m}(A)$ converges to $\cN(0, \cD(A)).$ \vskip2mm

Indeed, suppose that we have convergence along this subsequence.
Then, given an arbitrary $N$ choose $m$ so that $N_m\leq N<N_{m+1}.$ 
Then letting $S_N=S_N(A)$,
\begin{equation}\label{eqn:decomp_ofS_N}
\frac{S_N}{\sqrt{N}}=\frac{S_{N_m}}{\sqrt{N_m}}+\frac{S_N-S_{N_m}}{\sqrt{N}}+\frac{S_{N_m}}{\sqrt{N_m}}\left(\sqrt\frac{N_m}{N}-1\right)=I+II+III. 
\end{equation}
By Theorem \ref{ThQMix}, given $s>0$, there exists $\beta=\beta(s)>0$ such that for almost every $\omega$ there exists $C(\omega)$ such that
\begin{equation}\label{eqn:upperbound_mixing_slow}
\left|\int A(F^n x) A(F^{n+k} x) dx\right|\leq C(\omega) \|A\|_{\cH^s_0} \min\left(1, n e^{-\beta k}\right). 
\end{equation}
Summing over $N\leq n\leq n+k \leq N_m$ we get
$\int (S_N-S_{N_m})^2\,dx\leq C (N-N_m) \ln N$, so the second term in \eqref{eqn:decomp_ofS_N} converges to zero in probability due to the Chebyshev's inequality.
Also, the third term converges to zero due to the Slutsky's theorem and our assumption that the CLT holds along $N_m.$ Invoking again Slutsky's theorem we see
that the Central Limit Theorem holds along the sequence of all $N$.\vskip2mm

{\bf Step 2.}  It suffices to prove that the quenched Central Limit Theorem holds for a $\cH^s_0$ dense set of functions.
\vskip2mm

Indeed, suppose that there is a dense set of functions $\mc{A}$ and a full measure set of $\Omega$ such that for $\omega\in \Omega$, and $\tA\in \cA$, the (quenched) Central Limit Theorem holds.
Now take arbitrary $A\in \cH^s_0$, and let $h$ be a compactly supported smooth test function
 on $\mathbb{R}$. 
Let $J$ denote the support of $h$. 
We need to show that for $\omega\in \Omega$ that
\begin{equation}
\label{NormTest}
\lim_{N\to \infty} \int h\left(\frac{S_N(A)(x)}{\sqrt{N}}\right)dx=\int_{J} h(u) \gauss_{\cD(A)}(u) du ,
\end{equation}
where $\gauss_{D}$ denotes the density of the normal random variable with zero mean and variance $D$.
Fix $\eps\!>\!0$ and take $\tA\!\in\! \cA$ such that $\DS \|\tA\!-\!A\|_{\cH_0^s} \!\leq\! \eps.$
By Lemma \ref{lem:mcD_est}, $|\cD(A)\!-\!\cD(\tA)|\!\leq\! C_A \eps$. 
Now write
$$ 
\int h\left(\frac{S_N(A)}{\sqrt{N}}\right)dx=\int h\left(\frac{S_N(\tA)}{\sqrt{N}}\right)dx +
\int \left[h\left(\frac{S_N(A)}{\sqrt{N}}\right)- h\left(\frac{S_N(\tA)}{\sqrt{N}}\right)\right]dx. 
$$
Since $\tA\in \cA$, the first term for large $N$ is $\eps$-close to 
$\int_{J} h(u) \gauss_{\cD(\tA)}(u) du $, and hence it is $C_A\eps$ close to 
$\int_{J} h(u) \gauss_{\cD(A)}(u) du $. From the mean value theorem, the second term is smaller in absolute value than
\begin{equation}\label{eqn:second_term_est2}
\|h\|_{C^1} \eps^{1/3} +\|h\|_{C^0} \vol\left(x: \abs{\frac{S_N(A)(x)}{\sqrt{N}}-\frac{S_N(\tA)(x)}{\sqrt{N}}}\geq \eps^{1/3}\right). 
\end{equation}
Note that $\vol$ term in the above line is a random variable depending on $\omega$.
By Proposition \ref{prop:quenched_variance} applied to the function $A-\tA$, and Chebyshev's inequality the above expression is $O(\eps^{1/3})$. 
Since $\eps$ is arbitrary \eqref{NormTest} holds for all $h$, and hence $A$ satisfies the quenched CLT.
\vskip2mm

{\bf Step 3.} Since $\cH^s_0$ contains a countable dense set of $C^{\infty}$ functions, it is enough to show that the quenched CLT holds for a fixed smooth function $A\in \mathcal{H}^s_0$.

\vskip2mm

{\bf Step 4.} For smooth $A\in C^{\infty}_0(M)$, almost surely, the functions $\Phi_{A, N}(\xi)= \int e^{iS_N(A)(x)\xi/\sqrt{N}} dx$ are equicontinuous with respect to $N$.
\vskip2mm

Indeed $\Phi_{A, N}(0)=1$. Taking the first derivative gives:
\[
\partial_\xi\Phi_{A,N}=\int \frac{iS_N(A)(x)}{\sqrt{N}}e^{iS_N(A)(x)\xi/\sqrt{N}}\,dx. 
\]
Note in particular that $\partial_{\xi}\Phi_{A,N}(0)=0$ since $A$ has zero mean, and by Cauchy-Schwarz,
\[
\abs{\partial_{\xi}\Phi_{A,N}(\xi)}\le \left(\int \frac{S_N(A)^2(x)}{N}\,dx\right)^{1/2}.
\]
The above is a random quantity depending on $\omega$. 
By Proposition \ref{prop:quenched_variance}, it follows that for sufficiently large $N$, $\int N^{-1}S^2_N(A)(x)\,dx\le K\|A\|^2_{\mathcal{H}^s_0}$ and hence  $\abs{\partial_{\xi}\Phi_{A,N}}$ is a bounded function. Thus for a fixed $\omega$ the family $\Phi_{A,N}(\xi)$ is equicontinuous. 

\vskip2mm

Combining Steps 1-4 above, we see that it suffices to show for a fixed $C^{\infty}$ smooth $A$ that for almost every $\omega$ that $\Phi_{A,N}(\xi)\to e^{-\cD(A) \xi^2/2}$ for all rational $\xi$ restricted to the sequence $N_m$ from Step~1. Hence it suffices to show that the convergence holds for a fixed (rational) $\xi$.

We now show that for fixed $A$ and $\xi$ that for almost every $\omega$, $\Phi_{A,N}(\xi)\to e^{-\cD(A) \xi^2/2}$. From \eqref{eqn:Ax-Ay}  if 
$B(x,y)\!\!=\!\!A(x)$ then $\bbD(B)\!\!=\!\!\cD(A)$, while if $B(x,y)\!\!=\!\!A(x)\!-\!A(y)$ then  
$\bbD(B)=2\cD(A)$. 
Next let 
\[
\DS Z_N(\omega)=\int e^{i\xi S_N(A)(x)/\sqrt{N}} dx-e^{-\xi^2 \cD(A)/2}.
\]
We claim that $Z_{N_m}$ converges to zero almost surely.
Indeed by \eqref{Eq2DPolyChar}, $\EXP(Z_N)=O(N^{-\eta})$ and, in addition,
\begin{align*}
 \EXP\left(Z_N \bar{Z}_N\right)\!\!=&\EXP\!\!\left(\!\left[\int e^{i\xi S_N(A)(x)/\sqrt{N}} dx\!-\!e^{-\xi^2 \cD(A)/2}\right]\!\!
\left[\int e^{-i\xi S_N(A)(y)/\sqrt{N}} dy\!-\!e^{-\xi^2 \cD(A)/2}\right]\!\right)\!\!\\
=&\EXP\left(\iint e^{i\xi(S_N(A)(x)-S_N(A)(y))/\sqrt{N}}\,dxdy\right)+e^{-\xi^2\mathcal{D}(A)}\\
&-2e^{-\xi^2\mathcal{D}(A)/2}\EXP\left(\mathfrak{R}\int e^{i\xi S_N(A)(x)/\sqrt{N}}\,dx\right)\\
=& e^{-\xi^2\mathcal{D}(A)}+O(N^{-\eta})\!+\!e^{-\xi^2\mathcal{D}(a)}\!-\!2e^{-\xi^2\mathcal{D}(A)/2}(e^{-\xi^2\mathcal{D}(A)/2}+O(N^{-\eta}))
\!\!\\
=& O\left(N^{-\eta}\right).
\end{align*}
Hence as $\E[Z_{N_m}]=O(N^{-\eta})$ and $\E[\abs{Z_{N_m}}^2]=O(N^{-\eta})$ by the above line, it follows from our choice of $N_m$ and Chebyshev's inequality that
 \[
 \lim_{m\to\infty} \int e^{i\xi S_{N_m}(A)(x) \xi} dx=e^{-\xi^2 \cD(A)/2}
 \]
 for almost every $\omega$
 completing the proof of the theorem in the case of Sobolev spaces.

 \noindent \textbf{Adaptation to H\"older Functions.} The main adaptations are in the sequence of steps. Once those are carried out the final paragraph is identical, so we will go through the steps in order and explain what needs to change.
 
     {\bf Step. 1} The only place where we used mixing here was in the application of Theorem~\ref{ThQMix} to obtain equation \eqref{eqn:upperbound_mixing_slow}. Theorem \ref{ThQMix} gives this same conclusion in the H\"older regularity class as well.
     
     {\bf Step. 2} The main difference occurs in this step. 
          It will suffice to show that there is a countable set $\mc{A}$ of $C^{\alpha}$ functions that are $C^{\beta}$-dense in $C^{\alpha}$, and for these functions $A\in \mc{A}$ that the quenched CLT holds. The proof in this step also uses two further facts:
     
         (a) We need to know that $\mc{D}(\mc{A})$ is a continuous function in the $C^{\beta}$-topology on $C^{\alpha}$. This is guaranteed by Lemma \ref{lem:mcD_est}, in the H\"older case. 
        
         (b) Then in order to obtain the estimate in  \eqref{eqn:second_term_est2}, we apply Proposition~\ref{prop:quenched_variance} as before, which gives the needed estimate for H\"older functions.
     
     {\bf Step 3.} In this step, we instead use that $C^{\alpha}(M)$ has a $C^{\beta}$-dense set $\mc{A}$ of $C^{\infty}$ functions due to Proposition \ref{lem:dense_C_alpha}.
     
     {\bf Step 4.} Finally, we need to know that for $A\in \mc{A}$ above, that the family of functions $\Phi_{A,N}(\xi)$, $N\in \N$, are equicontinuous. The only thing used in this computation was Lemma~\ref{prop:quenched_variance}, which holds for H\"older functions as well. 
 
Once all of these steps are completed, we have reduced the proof to showing convergence for a single $C^{\infty}$ function at a single mode $\xi$, which follows exactly as above and is the assumption in the hypotheses, so we can conclude in the H\"older case as well. 
\end{proof}

\printbibliography

\end{document}